\newtheorem{theorem}{Theorem}[section]
\newtheorem{lemma}[theorem]{Lemma}
\newtheorem{conj-prop}[theorem]{Conjectural proposition}
\newtheorem{proposition}[theorem]{Proposition}
\newtheorem{conjecture}[theorem]{Conjecture}
\newtheorem{corollary}[theorem]{Corollary}
\theoremstyle{definition}
\newtheorem{definition}[theorem]{Definition}
\newtheorem{example}[theorem]{Example}
\theoremstyle{remark}
\newtheorem{remark}[theorem]{Remark}
\numberwithin{equation}{section}
\begin{document}

\title[Structure of node polynomials for curves on surfaces]{Structure of node polynomials\\for curves on surfaces}

\author{Nikolay Qviller}
\address{Centre of Mathematics for Applications, University of Oslo, P.O. Box 1053 Blindern, NO-0316 Oslo, NORWAY}
\email{nikolayq@cma.uio.no}

\subjclass[2000]{Primary: 14N10. Secondary: 14C17, 05A18.}

\keywords{Enumerative geometry, nodal curves, projective surfaces, intersection theory, excess intersections, residual schemes, distinguished varieties, polydiagonals, Bell polynomials, inclusion-exclusion, node polynomials}

\date{\today}

\begin{abstract}
We provide a structural generalization of a theorem by Kleiman--Piene, concerning the enumerative geometry of nodal, algebraic curves in a complete linear system $|\mathscr{L}|$ on a smooth projective surface $S$. Provided that $r,$ the number of nodes, is sufficiently small compared to the ampleness of the linear system, we show that the number of $r$-nodal curves passing through points in general position on $S$ is given by a Bell polynomial in universally defined integers $a_{i}(S,\mathscr{L}),$ which we identify, using classical intersection theory, as linear, integral polynomials evaluated in four basic Chern numbers. Furthermore, we provide a decomposition of the $a_i$ as a sum of three terms with distinct geometric interpretations, and discuss the relationship between these polynomials and Kazarian's Thom polynomials for multisingularities of maps.
\end{abstract}

\maketitle

\setcounter{tocdepth}{1}

\tableofcontents

\section{Introduction}\label{sec:intro}
\subsection{Background}\label{subsec:background}
The enumerative geometry of nodal curves has, in recent years, grown into a rich and increasingly intriguing field of mathematics. While many of the questions which arise in this context belong naturally to the domain of classical algebraic geometry, there are also deep connections to more sophisticated, modern notions, such as mirror symmetry. In this paper, we consider the enumeration of nodal curves on surfaces, which we assume to be complex, projective (for natural reasons) and smooth and irreducible (for convenience). There have recently been important breakthroughs in this field. In particular, in 2010 Tzeng gave a first proof \cite{Tzeng} of important conjectures of G\" ottsche.

More precisely, let $S$ denote a surface as specified above. If $\mathscr{L}$ is a line bundle on $S,$ one may consider the associated complete linear system of curves, given by $|\mathscr{L}|,$ that is, $\mathbb{P}(H^{0}(S,\mathscr{L})).$ Denote this projective space by $Y,$ let $N$ be the dimension of $Y,$ and let $r \leq N$ be a non-negative integer. Denote by $N_{r}(S,\mathscr{L})$ the degree of the locus of $r$-nodal curves in $Y.$ Finally, let $(\partial,k,s,x)$ denote the four \textit{Chern numbers} of the polarized surface $(S,\mathscr{L}),$ that is, $\partial := \mathscr{L}^{2}, k = \mathscr{LK}_{S}, s = \mathscr{K}_{S}^{2}, x = c_{2}(S),$ where $\mathscr{K}_{S}$ denotes the canonical bundle on $S,$ and, for two line bundles $\mathscr{L}$ and $\mathscr{K},$ we let $\mathscr{LK} \in \mathbb{Z}$ denote the degree of $c_{1}(\mathscr{L})c_{1}(\mathscr{K}).$ The two primary conjectures of G\"ottsche (proved by Tzeng) are:

\begin{conjecture}\label{conj:polynomiality}
\emph{(\cite{Got}, Conjecture 2.1.)}
There exist polynomials $Z_{r} \in \mathbb{Q}[t,u,v,w]$ of degree $r$ (for $r \geq 0$) such that whenever $\mathscr{L}$ is $(5r-1)$-very ample, $N_{r}(S,\mathscr{L})$ is given by $Z_{r}(\partial,k,s,x).$
\end{conjecture}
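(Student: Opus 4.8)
The plan is to follow the classical intersection-theoretic route --- the one opened by Vainsencher and pushed further by Kleiman--Piene for small $r$ --- rather than Tzeng's algebraic-cobordism argument, because this route also produces the integers $a_i$ and the Bell-polynomial shape announced above. The first step is the usual reduction: since $N_r(S,\mathscr{L})$ is by definition the degree of the locus $V_r\subseteq Y$ of $r$-nodal curves, it equals the number of $r$-nodal members of a general linear subsystem $T\cong\mathbb{P}^r\subseteq|\mathscr{L}|$. Here one invokes $(5r-1)$-very ampleness to run the ``severity count'': curves whose singular scheme has colength exceeding $r$, or carries an embedded point, or has a non-nodal singularity, sweep out in $Y$ a locus of codimension strictly greater than $r$, so a general $T$ misses all of them, meets $V_r$ transversally in $N_r$ distinct reduced points, and each such curve has exactly $r$ ordinary nodes and nothing worse. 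It thus suffices to compute one intersection number on a space built functorially from the universal curve over $T\times S$.

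That space is an enriched configuration space. Over $T\times S$ the universal curve $\mathcal{C}$ is the zero scheme of a section of $\mathscr{L}$ twisted by $\mathscr{O}_T(1)$, whose relative first-order behaviour is governed by the bundle of relative principal parts $\mathcal{P}^1_{T\times S/T}(\mathscr{L})$; the vanishing of the section together with its relative $1$-jet defines the ``universal singular point'' $W\subseteq\mathcal{C}$, smooth of codimension $2$ in $\mathcal{C}$ under the ampleness hypothesis, hence of dimension $r-1$ over $T$. To treat $r$ nodes simultaneously I would form the locus of ordered $r$-tuples of \emph{distinct} singular points, then take its closure $Z_r$ inside the iterated blow-up of $S^r\times T$ along the polydiagonals --- a fibrewise Fulton--MacPherson-type compactification of the configuration space, with boundary stratified by the partition lattice $\Pi_r$. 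The point of blowing up the polydiagonals is exactly that the limits in which several of the $r$ nodes collide (tacnodes, higher tangencies, triple points, \dots) get replaced by residual schemes supported on the exceptional divisors, and these are under control.

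The combinatorial heart is an inclusion--exclusion. The push-forward of $[Z_r]$ to $T$ overcounts, its excess being carried by distinguished subvarieties (in the sense of Fulton) sitting on the polydiagonal strata; M\"obius inversion over $\Pi_r$ rewrites $N_r$ as $\sum_{\pi\in\Pi_r}\mu(\pi)\prod_{B\in\pi}b_{|B|}$, where $b_i$ is a ``connected'' intersection number carried by the smallest diagonal of $S^i\times T$ and built only from Chern classes of $\mathscr{L}$, of $T_S$, of $\mathscr{O}_T(1)$, and from the exceptional classes of the blow-ups. Pushing each $b_i$ forward to $T=\mathbb{P}^r$, where only $c_1(\mathscr{O}_T(1))^r$ survives and integrates to $1$, and then to a point, reduces it to an $S$-integral of a universal codimension-$2$ class in $c_1(\mathscr{L})$, $c_1(T_S)$, $c_2(T_S)$, i.e.\ to a combination of the four Chern numbers $\mathscr{L}^2$, $\mathscr{L}\mathscr{K}_S$, $\mathscr{K}_S^2$, $c_2(S)$ with coefficients depending only on $i$ and $r$, not on $(S,\mathscr{L})$. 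Assembling the sum yields $N_r=B_r(a_1,\dots,a_r)$ for a universal complete Bell polynomial, with $a_i=b_i$, hence a universal $Z_r\in\mathbb{Q}[t,u,v,w]$ with $N_r(S,\mathscr{L})=Z_r(\partial,k,s,x)$. A closer look at the formula for $b_i$ --- the $S^i$ is forced onto its small diagonal $\cong S$, and the exceptional directions contribute only Segre classes of a projective bundle over it --- shows each $a_i$ is in fact \emph{linear} in the four Chern numbers, so every monomial of $B_r$ has degree at most $r$; thus $\deg Z_r\le r$, with equality because the all-singletons term is a nonzero multiple of $a_1^r$ and $a_1$ has $\partial$-coefficient $3$ (e.g.\ $Z_1=3t+2u+w$, which on $\mathbb{P}^2$ is the classical $3(d-1)^2$).

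The main obstacle is the geometry underpinning the first two steps: proving that $(5r-1)$-very ampleness genuinely forces $W$, every intermediate blow-up, and the closure $Z_r$ to be as clean as claimed, and --- the truly delicate point --- identifying the distinguished varieties along the polydiagonals and evaluating their contributions via residual-scheme theory and the excess-intersection formula. Once those inputs are in hand the combinatorics that packages the $b_i$ into integers $a_i$ and $N_r$ into a Bell polynomial is essentially formal; but without the very-ampleness bound, spurious components of the wrong dimension intrude, the excess ceases to be localized on the polydiagonals, and $N_r$ is no longer polynomial --- which is also why Tzeng's degeneration proof, and the Kool--Shende--Thomas proof via the G\"ottsche--Yau--Zaslow formula and Hilbert schemes of points, each must build an ampleness hypothesis of this shape in somewhere.
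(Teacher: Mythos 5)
First, a point of order: the paper does not prove this statement. It is quoted as G\"ottsche's Conjecture 2.1 and the paper explicitly defers its proof to Tzeng \cite{Tzeng} (via algebraic cobordism and degeneration) and to Kool--Shende--Thomas \cite{KST} (via Hilbert schemes of points, with the sharper hypothesis of $r$-very ampleness). What the paper itself establishes by the intersection-theoretic route you sketch is only the \emph{structural} part: Theorem \ref{thm:shape} shows $N_{r}(S,\mathscr{L}) = P_{r}(a_{1},\ldots,a_{r})/r!$ for integers $a_{i}(S,\mathscr{L})$ defined as contributions of distinguished varieties supported on the small diagonals, and Sections \ref{sec:equivalences}--\ref{sec:residual} then try to identify these integers as values of universal linear forms in $(\partial,k,s,x)$ --- which is exactly what would upgrade the Bell-polynomial shape to the polynomiality conjecture.

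This is where your proposal has a genuine gap, and it is the same gap the paper candidly leaves open. You write that after blowing up the polydiagonals ``the limits in which several of the $r$ nodes collide \ldots\ get replaced by residual schemes supported on the exceptional divisors, and these are under control,'' and later that each $b_{i}$ reduces to ``a combination of the four Chern numbers \ldots\ with coefficients depending only on $i$ and $r$, not on $(S,\mathscr{L})$.'' Neither assertion is justified, and the second is precisely the content of Conjecture \ref{conj:dep} and Conjectural Proposition \ref{conj-prop:linearity}: the paper decomposes $a_{i} = (-1)^{i-1}(i-1)!(Q_{i}+C_{i}+R_{i})$, computes the proper equivalence $Q_{i}$ of the small diagonal and (for $i \leq 4$) the correction terms $C_{i}$, but can only \emph{conjecture} that the residual term $R_{i}$ coming from embedded distinguished varieties is a universal linear form, because this requires controlling the Segre class of the residual scheme $\mathrm{Res}_{r}$ in the blow-up --- a computation that has not been carried out for general $r$. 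A second, related soft spot: your M\"obius-inversion step over $\Pi_{r}$ is only valid when formulated in terms of contributions of distinguished varieties (as in Lemma \ref{lemma:mobius_coeffs}); naive inclusion-exclusion of equivalences fails because Segre classes do not satisfy inclusion-exclusion (Example \ref{ex:case_three} and \cite{Alu1}), which is exactly why the correction terms $C_{i}$ appear and why ``essentially formal'' understates the difficulty. Finally, the transversality and severity statements you invoke for a general $\mathbb{P}^{r} \subset |\mathscr{L}|$ are themselves nontrivial; the paper imports them from \cite[Proposition 2.1]{KST} rather than deriving them from $(5r-1)$-very ampleness. In short, your outline reproduces the program of Sections \ref{sec:basic_setup}--\ref{sec:residual}, but that program is not yet a proof of the conjecture, and the proofs that do exist proceed by entirely different methods.
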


\begin{conjecture}\label{conj:gen_got}
\emph{(\cite{Got}, Conjecture 2.4.)}
Let $(S,\mathscr{L})$ be fixed, then the generating function of the (virtual) curve numbers $Z_{r}(\partial,k,s,x)$ is
\begin{displaymath}
\sum_{r \geq 0} Z_{r}(\partial,k,s,x)(DG_{2}(\tau))^{r} = \frac{(DG_{2}(\tau)/q)^{\chi(\mathscr{L})}B_{1}(q)^{\mathscr{K}_{S}^{2}}B_{2}(q)^{\mathscr{L} \mathscr{K}_{S}}}{(\Delta(\tau)D^{2}G_{2}(\tau)/q^{2})^{\chi(\mathscr{O}_{S})/2}}.
\end{displaymath}
\end{conjecture}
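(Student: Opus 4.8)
The plan is to deduce the generating-function identity from the polynomiality of Conjecture~\ref{conj:polynomiality} together with a multiplicativity property coming from degeneration. First I would introduce an \emph{algebraic cobordism group} $\omega$ of polarized surfaces: the free abelian group on isomorphism classes of pairs $(S,\mathscr{L})$ with $\mathscr{L}$ sufficiently ample, modulo the double-point degeneration relations arising from a semistable degeneration of $S$ along a smooth divisor, in the spirit of Levine--Pandharipande. Using the explicit behaviour of the four Chern numbers $(\partial,k,s,x)$ under such degenerations, together with enough examples — $\mathbb{P}^2$, $\mathbb{P}^1\times\mathbb{P}^1$ and the Hirzebruch surfaces, abelian surfaces, $K3$ surfaces, and products of two curves, each polarized in various ways — one shows that $\omega\otimes\mathbb{Q}$ is four-dimensional, with $(\partial,k,s,x)$ a coordinate system. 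The crucial extra input is that the numbers $N_r(S,\mathscr{L})$, equivalently their polynomial extensions $Z_r(\partial,k,s,x)$, are \emph{multiplicatively} additive on $\omega$: the assignment $[S,\mathscr{L}]\mapsto\sum_{r\ge 0}Z_r(S,\mathscr{L})\,z^r$ is a group homomorphism from $\omega$ to the multiplicative group $1+z\,\mathbb{Q}[[z]]$.

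Any such homomorphism factors through $\omega\otimes\mathbb{Q}$ and is determined by its values on a basis, so there are universal series $A_\partial,A_k,A_s,A_x\in 1+z\,\mathbb{Q}[[z]]$ with
\[
\sum_{r\ge 0}Z_r(\partial,k,s,x)\,z^r \;=\; A_\partial(z)^{\partial}\,A_k(z)^{k}\,A_s(z)^{s}\,A_x(z)^{x}.
\]
Next I would make the change of variables $z=DG_2(\tau)$, with $D=q\,d/dq$ and $q=e^{2\pi i\tau}$, which converts the left-hand side into the series of Conjecture~\ref{conj:gen_got}, and rewrite the exponents via Noether's formula $\chi(\mathscr{O}_S)=(s+x)/12$ and Riemann--Roch $\chi(\mathscr{L})=(\partial-k)/2+\chi(\mathscr{O}_S)$. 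After this bookkeeping the right-hand side is a product of a power of $DG_2/q$, of two correction series $B_1(q)$ and $B_2(q)$ raised to $\mathscr{K}_S^2$ and $\mathscr{L}\mathscr{K}_S$ respectively, and of $\bigl(\Delta(\tau)D^2G_2(\tau)/q^2\bigr)^{-\chi(\mathscr{O}_S)/2}$, so that proving the Conjecture is reduced to identifying each of the four universal series with the indicated quasimodular expression.

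That identification is obtained by testing both sides on carefully chosen families. A $K3$ surface has $k=s=0$, $x=24$, $\chi(\mathscr{O}_S)=2$, so it isolates a power of $A_x$ and, through the Yau--Zaslow formula (Bryan--Leung, Beauville, \dots), which identifies the relevant $K3$ counts with the Fourier coefficients of $1/\Delta(q)$, pins down the $\Delta$-factor. An abelian surface has $k=s=x=0$ and topological Euler number $0$, isolating $A_\partial$. Low-degree Severi degrees on $\mathbb{P}^2$, together with the rational and Hirzebruch surfaces, then determine $B_1$ and $B_2$. Because the target is a fixed quasimodular form, finitely many coefficients of $N_r$ in each family, beyond those forced by the closed formulas above, already determine the universal series to all orders, so the computation terminates.

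I expect the main obstacle to be the degeneration input rather than the bookkeeping: proving that $N_r$ obeys a double-point relation — equivalently that $\sum_r Z_r z^r$ is genuinely multiplicative on $\omega$ — requires controlling how $r$-nodal curves, and the excess-intersection contributions that compute $N_r$, behave under a semistable degeneration of the ambient surface, and one needs the very-ampleness hypothesis of Conjecture~\ref{conj:polynomiality} to keep the locus of $r$-nodal curves of the expected dimension and reduced throughout. Once multiplicativity and the four-dimensionality of $\omega\otimes\mathbb{Q}$ are established, the rest is the bounded computation sketched above. The structural results of the present paper give a complementary, more hands-on route to the multiplicative shape of the generating series: once $Z_r$ is expressed through Bell polynomials in the $a_i(S,\mathscr{L})$, the generating function automatically takes the form $\exp\bigl(\sum_{i\ge1} a_i z^i/i!\bigr)$, and linearity of the $a_i$ in $(\partial,k,s,x)$ yields precisely the four-factor product above.
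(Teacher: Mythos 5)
The paper contains no proof of this statement: it is quoted verbatim as G\"ottsche's Conjecture 2.4, and the text merely records that Tzeng \cite{Tzeng} proved it (together with Conjecture \ref{conj:polynomiality}). So there is no internal argument to compare yours against; the relevant benchmark is Tzeng's published proof, and your sketch is in fact a faithful outline of exactly that strategy --- the algebraic cobordism group of pairs in the style of Levine--Pandharipande, its rank-four structure with $(\partial,k,s,x)$ as coordinates, multiplicativity of $\sum_r Z_r z^r$ under double-point degeneration, and the determination of the universal factors from $K3$ and rational surface computations. As an attribution of the right ideas, it is accurate.

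As a proof, however, the two hardest steps are asserted rather than argued. First, the double-point relation is the heart of the matter, and it is \emph{not} proved for the enumerative numbers $N_r(S,\mathscr{L})$ directly: Tzeng establishes it for virtual counts defined through intersection numbers on Hilbert schemes of points on the degenerating family (via the Li--Wu degeneration formula), and only afterwards identifies these with $N_r$ under the ampleness hypothesis, using \cite{KST}. Your sketch conflates the enumerative and virtual definitions, and ``controlling how $r$-nodal curves behave under a semistable degeneration'' is precisely the step one cannot carry out naively. Second, since $B_1(q)$ and $B_2(q)$ are unknown series in the statement, the actual content to be proved is the identification of the $\chi(\mathscr{L})$- and $\chi(\mathscr{O}_S)/2$-factors with $DG_2(\tau)/q$ and $(\Delta(\tau)D^2G_2(\tau)/q^2)^{-1}$; this requires the Yau--Zaslow/Bryan--Leung input on $K3$ surfaces, which you cite but do not reduce to a checkable statement (note that the $K3$ family only constrains the product $A_\partial^{\partial}A_x^{24}$, so isolating the individual series needs the additional families you mention, with care). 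Finally, your closing remark --- that the present paper's Bell-polynomial structure yields the multiplicative shape --- is circular as stated, since Theorem \ref{thm:main} is \emph{derived from} the G\"ottsche--Yau--Zaslow formula in \cite{Qvi}; the independent route through Theorem \ref{thm:shape} requires the linearity of all the $a_i$ in the Chern numbers, which this paper only establishes conditionally on Conjecture \ref{conj:dep}.
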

Here, $G_{2}(\tau)$ is the second Eisenstein series and $\Delta(\tau)$ is the Ramanujan discriminant modular form. Let $q := e^{2\pi i\tau},$ then
\begin{eqnarray*}
G_{2}(\tau) & = & -1/24 + \sum_{n=1}^{\infty}\left( \sum_{d|n}d \right) q^{n}, \\
\Delta(\tau) & =  & q \prod_{m > 0} (1-q^{m})^{24}. 
\end{eqnarray*}
$D$ denotes the differential operator $q\frac{d}{dq},$ and finally $B_{1}(q)$ and $B_{2}(q)$ are (currently unknown) rational power series in $q.$

The latter result will be referred to as the \textit{G\"ottsche--Yau--Zaslow formula.} It involves five universal power series, three of which are quasi-modular forms, while the remaining two, $B_{1}(q)$ and $B_{2}(q),$ are not yet identified. However, using the recursive formula of Caporaso--Harris \cite{CH}, G\"ottsche computed the terms of these power series up to degree 28 \cite[Remark 2.5]{Got}.

In \cite{KST}, Kool, Shende and Thomas published a shorter proof of the first conjecture mentioned above. They also refined the result, showing that it is sufficient for $\mathscr{L}$ to be $r$-very ample. On the other hand, in \cite[Theorem 2.1]{Qvi}, we show that a consequence of the G\"ottsche--Yau--Zaslow formula is that the node polynomials $Z_{r}(\partial,k,s,x)$ (using terminology introduced by Kleiman and Piene) are of a very particular form:

\begin{theorem}\label{thm:main}
\emph{(\cite{Qvi}, Theorem 2.1.)}
For all $i \geq 1$ there exists a linear form $a_i$ in four variables, with coefficients which are integers, such that for all $r \geq 0,$
\begin{displaymath}
Z_{r}(\partial,k,s,x) = \frac{P_{r}(a_{1}(\partial,k,s,x), \ldots, a_{r}(\partial,k,s,x))}{r!},
\end{displaymath}
with $P_{r}$ the $r$th complete exponential Bell polynomial.
\end{theorem}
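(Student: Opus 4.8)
\emph{Proof plan.}
The plan is to deduce the statement from the G\"ottsche--Yau--Zaslow formula (Conjecture~\ref{conj:gen_got}, now a theorem), by massaging its right-hand side into the exponential of a power series whose Taylor coefficients are visibly linear in the four Chern numbers. Put $t := DG_2(\tau)$. Since $DG_2(\tau) = q + 6q^2 + 12q^3 + \cdots$ has integer coefficients and lowest-order term $q$, the substitution $q = q(t) = t - 6t^2 + \cdots$ identifies $\mathbb{Z}[[q]]$ with $\mathbb{Z}[[t]]$. Under it the left-hand side of Conjecture~\ref{conj:gen_got} is literally $\sum_{r\ge 0} Z_r t^r$ (with $Z_0 = N_0 = 1$), so that this series equals the right-hand side, which is a well-defined element of $1 + t\mathbb{Q}[[t]]$ once $B_1(q)$ and $B_2(q)$ are normalized with constant term $1$.

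The first real step is to rewrite the exponents appearing in the formula. By Noether's formula $\chi(\mathscr{O}_S) = \tfrac{1}{12}(s+x)$ and by Riemann--Roch on a surface $\chi(\mathscr{L}) = \chi(\mathscr{O}_S) + \tfrac12(\mathscr{L}^2 - \mathscr{L}\mathscr{K}_S) = \tfrac{1}{12}(s+x) + \tfrac12(\partial - k)$, while $\mathscr{K}_S^2 = s$ and $\mathscr{L}\mathscr{K}_S = k$. Taking logarithms and collecting by Chern number, the right-hand side of Conjecture~\ref{conj:gen_got} becomes $\exp\big(g(t)\big)$ with $g(t) = \partial\, A_\partial(t) + k\, A_k(t) + s\, A_s(t) + x\, A_x(t)$, where, as power series in $t$,
\begin{align*}
A_\partial(t) &= \tfrac{1}{2}\log(DG_2/q), & A_k(t) &= \log B_2 - \tfrac{1}{2}\log(DG_2/q),\\
A_s(t) &= \tfrac{1}{12}\log(DG_2/q) - \tfrac{1}{24}\log(\Delta D^2G_2/q^2) + \log B_1, & A_x(t) &= \tfrac{1}{12}\log(DG_2/q) - \tfrac{1}{24}\log(\Delta D^2G_2/q^2).
\end{align*}
Each of $DG_2/q$, $\Delta D^2G_2/q^2$, $B_1$, $B_2$ lies in $1 + t\mathbb{Q}[[t]]$, so every $A_\bullet(t)$ has vanishing constant term; hence $g(t) = \sum_{i\ge1} a_i\, t^i/i!$ for (a priori only) rational linear forms $a_i = a_i(\partial,k,s,x)$. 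The defining identity $\exp\!\big(\sum_{i\ge1} a_i t^i/i!\big) = \sum_{r\ge0} P_r(a_1,\dots,a_r)\,t^r/r!$ for the complete exponential Bell polynomials then yields $Z_r = P_r(a_1,\dots,a_r)/r!$ at once; and since $P_r$ is isobaric of weight $r$ with $a_1$ of weight $1$ and leading term $a_1^r$, while $a_1 = Z_1 \not\equiv 0$, each $Z_r$ has degree exactly $r$, in agreement with Conjecture~\ref{conj:polynomiality}.

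The substantive point is that the $a_i$ have \emph{integer} coefficients, and I would prove this in two pieces. For the $\partial$- and $x$-slots one can argue arithmetically: if $F \in 1 + t\mathbb{Z}[[t]]$ then $i!\,[t^i]\log F = (i-1)!\,[t^i]\!\big(tF'/F\big) \in \mathbb{Z}$, because $tF'/F \in t\mathbb{Z}[[t]]$. Now $DG_2/q = 1 + 6q + 12q^2 + \cdots$, $D^2G_2/q = 1 + 12q + 36q^2 + \cdots$ and $\Delta/q = \prod_{m>0}(1-q^m)^{24}$ all lie in $1 + q\mathbb{Z}[[q]] = 1 + t\mathbb{Z}[[t]]$, so $i!\,[t^i]\log(DG_2/q)$ and $i!\,[t^i]\log(\Delta D^2G_2/q^2)$ are integers divisible by $(i-1)!$, which is a multiple of $24$ as soon as $i\ge 5$; thus the $\partial$-coefficient $\tfrac12\,i!\,[t^i]\log(DG_2/q)$ and the $x$-coefficient $i!\,[t^i]\big(\tfrac1{12}\log(DG_2/q) - \tfrac1{24}\log(\Delta D^2G_2/q^2)\big)$ of $a_i$ are integers for $i\ge5$, the finitely many cases $i\le4$ being checked directly (e.g.\ against \cite[Remark~2.5]{Got}, starting from the classical $a_1 = Z_1 = 3\partial + 2k + x$). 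For the $k$- and $s$-slots, which involve the unknown $B_1,B_2$, I would bring in geometry: by the Kool--Shende--Thomas sharpening \cite{KST}, whenever $\mathscr{L}$ is $r$-very ample one has $Z_r(\partial,k,s,x) = N_r(S,\mathscr{L}) \in \mathbb{Z}_{\ge0}$; and from $a_r = r!\,[t^r]\log\!\big(\sum_{j\ge0} Z_j t^j\big)$ and $\log(1+w) = \sum_{n\ge1}(-1)^{n+1}w^n/n$ one sees that $a_r$ is a $\mathbb{Z}$-linear combination of monomials in $Z_1,\dots,Z_r$ (each contribution carrying a factor $r!/n\in\mathbb{Z}$), hence takes integer values at every quadruple of Chern numbers realized by a polarized surface with $\mathscr{L}$ being $r$-very ample. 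Evaluating at iterated blow-ups of $\mathbb{P}^2$ and of $\mathbb{P}^1\times\mathbb{P}^1$ carrying sufficiently positive bundles exhibits the vectors $(1,-1,1,-1)$ and $(0,0,1,-1)$ in the $\mathbb{Z}$-span of such quadruples; combined with the integrality of the $\partial$- and $x$-coefficients already in hand, this forces the $s$- and $k$-coefficients of $a_r$ to be integers as well.

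I expect this last integrality step to be the only genuine obstacle, and precisely because the geometry alone does not suffice: the quadruples realized by polarized surfaces do not span $\mathbb{Z}^4$ --- Noether's formula imposes $s + x \equiv 0 \pmod{12}$ and $\mathscr{L}(\mathscr{L}+\mathscr{K}_S)\in 2\mathbb{Z}$ imposes $\partial \equiv k \pmod 2$ --- so the modular-forms computation in the $\partial$- and $x$-slots and the geometric input in the $k$- and $s$-slots must be used in tandem as above. (The present paper offers an alternative: identifying each $a_i$ directly, via classical intersection theory, with an integral polynomial in $(\partial,k,s,x)$, which bypasses $B_1,B_2$ entirely; but the derivation from Conjecture~\ref{conj:gen_got} sketched here is the one of \cite{Qvi}.) Everything else --- the Riemann--Roch bookkeeping, the change of variable, the Bell-polynomial identity and the arithmetic lemma --- is routine.
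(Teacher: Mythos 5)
Your argument is correct in outline, and it is essentially the derivation of \cite{Qvi} that the present paper only cites: Theorem~\ref{thm:main} is quoted from \cite{Qvi}, and the remark near the end of Section~\ref{sec:residual} confirms that your route --- pass to the variable $t=DG_2(\tau)$, take the logarithm of the G\"ottsche--Yau--Zaslow generating function of Conjecture~\ref{conj:gen_got}, use Noether and Riemann--Roch to collect the exponent into a linear form in $(\partial,k,s,x)$ for each power of $t$, and invoke the defining identity of the complete exponential Bell polynomials --- is the intended one there. The body of this paper develops a genuinely different route to the same structure: the Bell-polynomial shape is obtained geometrically in Theorem~\ref{thm:shape} from the M\"obius function of the partition lattice acting on the polydiagonals of $X^{\times r}$ (Lemma~\ref{lemma:mobius_coeffs} and Proposition~\ref{prop:splitting}), with $a_i(S,\mathscr{L})$ \emph{defined} as $(-1)^{i-1}(i-1)!$ times the contribution of the small diagonal, and the identification of these integers as values of universal integral linear forms is then pursued through the decomposition $a_i=(-1)^{i-1}(i-1)!(Q_i+C_i+R_i)$, whose residual term rests on Conjecture~\ref{conj:dep}. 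What your route buys is an unconditional proof of the stated theorem (granting Tzeng and \cite{KST}) with no geometric information about the $a_i$; what the paper's route buys is the geometric decomposition and the explicit computability of $Q_i$ and $C_i$, at the price of being only partially complete. Two points in your write-up should be made explicit rather than asserted: (i) the finitely many integrality checks for $i\le 4$ in the $\partial$- and $x$-slots must actually be carried out from the $q$-expansions (they do agree with the Kleiman--Piene values listed in the introduction); and (ii) the surfaces realizing $(1,-1,1,-1)$ and $(0,0,1,-1)$ as $\mathbb{Z}$-combinations of Chern quadruples of $r$-very ample pairs should be exhibited --- for instance $(\mathbb{P}^2,\mathscr{O}(d))$ against the blow-up of $\mathbb{P}^2$ at a point with $dH-E$ gives the difference $(1,-1,1,-1)$, while $(\mathbb{P}^2,\mathscr{O}(4N))$ against that blow-up with $5NH-3NE$ gives $(0,0,1,-1)$, both polarizations being $r$-very ample for $d,N\gg r$. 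With these details supplied your proof is complete, and your closing observation is accurate: since the realized quadruples satisfy $s+x\equiv 0\pmod{12}$ and $\partial\equiv k\pmod 2$, neither the arithmetic of the quasi-modular forms nor the geometric integrality alone suffices, and the two inputs must be combined exactly as you do.
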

This theorem generalizes the structural part of a theorem by Kleiman--Piene, \cite[Theorem 1.1]{KP1}, concerning node polynomials for $r \leq 8$ nodes. It does not, however, give the numerical expressions of the polynomials $a_i,$ of which Kleiman--Piene computed the first eight:
\footnotesize
\begin{eqnarray*}
a_{1} & = & 3\partial + 2k + x \\
a_{2} & = & -42\partial - 39k - 6s - 7x \\
a_{3} & = & 1380\partial + 1576k + 376s + 138x \\
a_{4} & = & -72360\partial - 95670k - 28842s - 3888x \\
a_{5} & = & 5225472 \partial + 7725168 k + 2723400 s + 84384 x \\
a_{6} & = & -481239360 \partial -778065120 k - 308078520 s + 7918560 x\\
a_{7} & = & 53917151040\partial + 93895251840k + 40747613760s - 2465471520x\\
a_{8} & = & -7118400139200\partial - 13206119880240k - 6179605765200s + 516524964480x.
\end{eqnarray*}
\normalsize

The aim of this paper is to provide an explicit construction of the linear polynomials $a_i,$ with methods from intersection theory. As the direct computation of the node polynomials $Z_{r}$ becomes increasingly difficult for high values of $r,$ our emphasis is on the structure of these polynomials, which do indeed seem to have some striking combinatorial properties. Using the principle of inclusion-exclusion combined with excess intersection theory, multiple-point formulas, and finally residual intersection theory, we are able provide a natural decomposition of the polynomials $a_i$ into a sum of three terms with distinct geometric interpretations. Two of these terms are computable with the methods at hand. In addition, we point out the connections between the polynomials $a_i$ and the multisingularity (Thom) polynomials appearing in \cite{Kaz} by Kazarian.

\subsection{Structure of this article}\label{subsec:structure} In Section \ref{sec:basic_setup} we describe the schemes which will be used to construct the node polynomials from an intersection theoretical viewpoint. Section \ref{sec:shape} provides an ad hoc definition of integers $a_{i}(S,\mathscr{L}),$ depending on $S$ and $\mathscr{L},$ and associated classes $a_{i}(S,\mathscr{L})H^{i}$ in the Chow ring of the linear system of curves ($H$ being the class of a hyperplane). It then presents the node polynomials $Z_r$ as Bell polynomials evaluated in the integers $(-1)^{i-1}(i-1)!a_i(S,\mathscr{L}), 1 \leq i \leq r.$ Sections \ref{sec:equivalences} and \ref{sec:residual} discuss the various contributions to the integers $a_i(S,\mathscr{L})$ coming from different distinguished varieties of the intersection product that we study, and establish them as being the evaluation in the Chern numbers of $(S,\mathscr{L})$ of universally defined linear forms with integer coefficients. To avoid excessive notations, these forms are denoted by $a_i.$

\subsection{Conventions}\label{subsec:conventions} For a class $\alpha \in A^{k}(\mathbb{P}^{N}),$ we denote by $\int \alpha$ the degree of the class $\alpha \cdot H^{N-k} \in A^{N}(\mathbb{P}^{N}),$ with $H$ the class of a hyperplane. If $Y$ is a $\mathbb{C}$-scheme and $F$ is a scheme over $Y,$ we denote by $F^{\times r}$ the $r$-fold fiber product of $F$ with itself over $Y.$

\subsection{Acknowledgements}\label{subsec:acknow} I am greatly indebted to my advisor, Ragni Piene, who presented the initial idea to me and has steadily guided me towards the present article, answering all my questions with never-failing patience. An important part of the research which led to this paper was done while the author was a visiting student at MIT in the spring of 2012. It is a great pleasure to thank the Department of Mathematics and Steven Kleiman for hosting me. I would also like to thank Paolo Aluffi for an interesting and worthwile discussion.

\section{Intersection theoretical setup}\label{sec:basic_setup}
Let $S$ denote a smooth, irreducible projective surface over $\mathbb{C},$ and let $\mathscr{L}$ be a line bundle on $S;$ its global sections correspond to curves on $S,$ so we have a natural parameter space for curves, namely the projective space
\begin{equation}
Y := \mathbb{P}(H^{0}(S,\mathscr{L})).
\end{equation}
Let $N := \textnormal{dim }Y$ and set $F := S \times Y$ with projection $\gamma_{1}$ to $Y.$ Consider the relative effective divisor $\mathscr{D}$ in $F$ which is the total space of the complete linear system $|\mathscr{L}|;$ set-theoretically, it consists of pairs $(\kappa,y)$ such that $\kappa$ is a point on the curve $D_{y} \subset S$ corresponding to $y \in Y.$ Let $X \subset \mathscr{D}$ be the \textit{critical locus}, i.e., the scheme-theoretic closure of the set of pairs $(\kappa,y) \in S \times Y$ such that $\kappa$ is a singularity on $D_{y}.$ We consider $X$ as a scheme over $Y$ through the composition $f: X \stackrel{\iota}{\hookrightarrow} S \times Y \stackrel{\gamma_{1}}{\rightarrow} Y.$ Let $\widetilde{\mathscr{L}}$ denote $\mathscr{L} \boxtimes \mathscr{O}_{Y}(1),$ an invertible sheaf on $F.$ Recall that the associated sheaf of first order principal parts is defined as
\begin{equation}
\mathscr{P}^{1}_{F/Y}(\widetilde{\mathscr{L}}) := p_{2\ast}\Bigl(p_{1}^{\ast}\widetilde{\mathscr{L}}/(\mathscr{I}^{2} \cdot p_{1}^{\ast}\widetilde{\mathscr{L}})\Bigr),
\end{equation}
where $p_{j}: F \times_{Y} F \rightarrow F$ are the projections and $\mathscr{I}$ is the ideal sheaf of the diagonal $\Delta_{F}$ in $F \times_{Y} F.$ This sheaf fits into the vertical exact sequence below:
\[
\xymatrix
{
& 0 \ar[d] \\
& \Omega^{1}_{F/Y} \otimes \widetilde{\mathscr{L}} \ar[d] \\
\mathscr{O}_{F} \ar[dr]^{z} \ar[r]^{z'} & \mathscr{P}^{1}_{F/Y}(\widetilde{\mathscr{L}}) \ar[d] \\
& \widetilde{\mathscr{L}} \ar[d] \\
& 0 \\
}
\]
Scheme-theoretically, $\mathscr{D}$ is defined as the zero scheme of a section $z$ of the invertible sheaf $\widetilde{\mathscr{L}},$ since $\mathscr{O}_{F}(\mathscr{D}) = \mathscr{L} \boxtimes \mathscr{O}_{Y}(1).$ The section $z$ induces a section $z'$ of $\mathscr{P}^{1}_{F/Y}(\widetilde{\mathscr{L}}).$ Scheme-theoretically, $X$ is the zero scheme of $z'.$ The vertical exact sequence above shows that $\mathscr{P}^{1}_{F/Y}(\widetilde{\mathscr{L}})$ is locally free of rank 3, so every component of $X$ has codimension at most 3 in $F$. In case of equality for all components, the class of $X,$ which we denote by $\xi := [X] \in A^{\ast}(F),$ is given by $c_{3}(\mathscr{P}^{1}_{F/Y}(\widetilde{\mathscr{L}})).$ 

\begin{proposition}\label{prop:isom}
There is an isomorphism of $\mathscr{O}_{X}$-modules between the $Y$-relative normal bundle of $X$ in $F,$ i.e., $N_{X}F/Y,$ and (the restriction to $X$ of) the sheaf $\mathscr{P}^{1}_{F/Y}(\widetilde{\mathscr{L}}).$
\end{proposition}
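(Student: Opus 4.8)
The plan is to exhibit the isomorphism locally and check it glues, exploiting the fact that $X$ is cut out in $F$ by the section $z'$ of the rank-$3$ bundle $\mathscr{P}^{1}_{F/Y}(\widetilde{\mathscr{L}})$, together with the hypothesis (implicit in speaking of the $Y$-relative normal bundle of the expected dimension) that $X$ has pure $Y$-relative codimension $3$ in $F$, equivalently that $z'$ is a regular section. First I would recall the general principle: if $\mathscr{E}$ is a locally free sheaf of rank $e$ on a scheme $F$ and $s \in \Gamma(F,\mathscr{E})$ is a regular section with zero scheme $Z$, then the conormal sheaf $\mathscr{I}_{Z}/\mathscr{I}_{Z}^{2}$ is canonically isomorphic to $\mathscr{E}^{\vee}|_{Z}$, via the map induced by $s$ (the section, viewed as $\mathscr{E}^{\vee}\to\mathscr{O}_{F}$, has image $\mathscr{I}_{Z}$, and the Koszul relations show the induced surjection $\mathscr{E}^{\vee}|_{Z}\twoheadrightarrow\mathscr{I}_{Z}/\mathscr{I}_{Z}^{2}$ is injective). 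Dualizing gives $N_{Z}F\cong\mathscr{E}|_{Z}$. Here, however, $\mathscr{D}$ is smooth over $Y$ and $X$ sits inside $\mathscr{D}$, so it is cleaner to work relatively: I would set $\mathscr{E} = \mathscr{P}^{1}_{F/Y}(\widetilde{\mathscr{L}})$, note $z'$ is a section of $\mathscr{E}$ on $F$ cutting out $X$, and observe that the $Y$-relative normal bundle $N_{X}F/Y$ is by definition $(\mathscr{I}_{X}/\mathscr{I}_{X}^{2})^{\vee}$ computed with respect to the ideal of $X$ in $F$ — the relative and absolute conormal sheaves of $X$ in $F$ differ only by the pullback of $\Omega^{1}_{Y}$, which contributes nothing to the normal bundle of $X$ viewed as a $Y$-scheme in the $Y$-smooth ambient $F$ in the relevant range; more precisely I would phrase everything in terms of the relative cotangent complex, where $L_{X/Y}$ restricted appropriately yields the two-term complex $[\mathscr{I}_{X}/\mathscr{I}_{X}^{2}\to\Omega^{1}_{F/Y}|_{X}]$, and the claim is about the $H^{1}$ of its dual.

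The key steps, in order: (1) recall the vertical exact sequence from the excerpt, $0\to\Omega^{1}_{F/Y}\otimes\widetilde{\mathscr{L}}\to\mathscr{P}^{1}_{F/Y}(\widetilde{\mathscr{L}})\xrightarrow{z\text{-quot}}\widetilde{\mathscr{L}}\to 0$, and note that $z'$ is the section of $\mathscr{P}^{1}_{F/Y}(\widetilde{\mathscr{L}})$ induced by $z\in\Gamma(F,\widetilde{\mathscr{L}})$; its image in $\widetilde{\mathscr{L}}$ is $z$ itself, whose zero scheme is $\mathscr{D}$, and its zero scheme is $X\subset\mathscr{D}$; (2) work in an affine open $U\subset F$ where $\widetilde{\mathscr{L}}$ is trivialized, so $z$ becomes a regular function $g$, $\mathscr{D}\cap U = V(g)$, and $\mathscr{P}^{1}_{F/Y}(\widetilde{\mathscr{L}})|_{U}$ is free with $z'$ given by $(g, dg)$ (the $1$-jet of $g$ along the fibers of $U\to Y$) in a basis compatible with the exact sequence; (3) the vanishing locus of $(g,dg)$ is precisely the critical locus $X\cap U$, and the Jacobian/Koszul criterion shows $z'$ is a regular section there exactly when $X$ has relative codimension $3$, which is our running hypothesis; (4) apply the regular-section principle of the previous paragraph to get $N_{X}F/Y \cong \mathscr{P}^{1}_{F/Y}(\widetilde{\mathscr{L}})|_{X}$ locally on each such $U$; (5) check the isomorphisms are canonical — they are induced functorially by the single global section $z'$ and the structural exact sequence — hence glue to the asserted global isomorphism of $\mathscr{O}_{X}$-modules.

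I expect the main obstacle to be step (3)–(4): making precise the passage from "$X$ is the zero scheme of the section $z'$ of $\mathscr{P}^{1}_{F/Y}(\widetilde{\mathscr{L}})$" to "$z'$ is a \emph{regular} section, so Koszul applies," and, relatedly, being careful that the normal bundle in play is the \emph{$Y$-relative} one. The subtlety is that $X$ is defined as a \emph{scheme-theoretic closure}, so a priori one only knows $X$ is contained in $V(z')$; one must invoke the codimension-$3$ hypothesis (stated in the excerpt just before the proposition, where $\xi = c_{3}(\mathscr{P}^{1}_{F/Y}(\widetilde{\mathscr{L}}))$ is asserted under exactly this condition) to conclude $X = V(z')$ with the expected scheme structure and that $z'$ is regular. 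Once that is granted, everything else is the standard Koszul computation: the conormal sequence for a regular section degenerates, $\mathscr{I}_{X}/\mathscr{I}_{X}^{2}\cong\mathscr{P}^{1}_{F/Y}(\widetilde{\mathscr{L}})^{\vee}|_{X}$, and dualizing yields the claim. I would also remark that the isomorphism is compatible with the exact sequence: it carries the sub $N_{X}\mathscr{D}/Y$ onto $\Omega^{1}_{F/Y}\otimes\widetilde{\mathscr{L}}|_{X} \cong \Omega^{1}_{\mathscr{D}/Y}\otimes\widetilde{\mathscr{L}}|_{X}$ — a fact that will presumably be used later when computing the classes $a_{i}$.
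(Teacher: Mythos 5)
Your proposal is correct and follows essentially the same route as the paper: both dualize the section $z'$ to get a surjection $\mathscr{P}^{1}_{F/Y}(\widetilde{\mathscr{L}})^{\vee}|_{X} \twoheadrightarrow \mathscr{I}/\mathscr{I}^{2}$ and then identify it as an isomorphism, the paper by noting the two locally free sheaves have the same rank, you by the equivalent Koszul/regular-section argument. Your extra care about the codimension-$3$ hypothesis (needed for $\mathscr{I}/\mathscr{I}^{2}$ to be locally free of rank $3$) and about the scheme structure of $X$ makes explicit what the paper leaves implicit, but the underlying argument is the same.
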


\begin{proof}
Let $\mathscr{I}$ denote the ideal of $X$ in $F,$ then $\mathscr{I}_{|X} \cong \mathscr{I}/\mathscr{I}^{2} \cong (N_{X}F/Y)^{\vee}.$ On the other hand, $X$ is defined by the section $z': \mathscr{O}_{F} \longrightarrow \mathscr{P}^{1}_{F/Y}(\widetilde{\mathscr{L}}).$ Taking the duals, we have a morphism
\begin{displaymath}
\mathscr{P}^{1}_{F/Y}(\widetilde{\mathscr{L}})^{\vee} \longrightarrow \mathscr{O}_{F}^{\vee} \cong \mathscr{O}_{F}
\end{displaymath}
whose image is the ideal sheaf $\mathscr{I}.$ Restricting to $X,$ we get a surjection $$\mathscr{P}^{1}_{F/Y}(\widetilde{\mathscr{L}})_{|X}^{\vee} \longrightarrow \mathscr{I}/\mathscr{I}^{2},$$ which is, in fact, an isomorphism since the sheaves have the same rank. The result follows.
\end{proof}

\begin{example}\label{ex:critical_locus_P2}
Consider $S = \mathbb{P}^{2}$ and the family of curves of degree $d,$ i.e., sections of $\mathscr{O}(d).$ Thus $Y = \mathbb{P}^{d(d+3)/2}.$ Let $\varphi \in \mathbb{C}[x_{0},x_{1},x_{2}, c_{ijk} | i + j + k = d]$ be the homogeneous polynomial of degree $d$ in $x_{0},x_{1}$ and $x_{2},$ and of degree 1 in the $c_{ijk}:$

\begin{displaymath}
\varphi := \sum_{i + j + k = d} c_{ijk}x_{0}^{i}x_{1}^{j}x_{2}^{k}.
\end{displaymath}
Then $\mathscr{D} = Z(\varphi)$ is a hypersurface in $S \times Y,$ whereas $X,$ which is the locus of singular curves with a marked singularity, appears, by the Jacobi criterion, as the complete intersection of the three hypersurfaces in $F$ determined by the vanishing of the three partial derivatives $\frac{\partial \varphi}{\partial x_{0}}, \frac{\partial \varphi}{\partial x_{1}}$ and $\frac{\partial \varphi}{\partial x_{2}}.$ As observed in \cite[\S 1.1]{Alu2}, it follows that $X$ is a $\mathbb{P}^{N-3}$-bundle over $\mathbb{P}^{2};$ in particular, it is smooth. \hfill $\blacksquare$
\end{example}

Above, we defined $\xi = [X] \in A^{\ast}(F).$ Pushing this class down to $Y$ by $\gamma_{1}$ yields an enumerative cycle class, in the following sense: $Y$ being projective of dimension $N$, its Chow ring is simply $A^{\ast}(Y) = \mathbb{Z}[H]/H^{N+1},$ with $H$ the class of a hyperplane. Therefore, $\gamma_{1\ast}\xi = a_{1}(S,\mathscr{L})H$ for an integer $a_{1}(S,\mathscr{L}),$ since dimension is preserved by pushdowns. The integer $a_{1}(S,\mathscr{L})$ is precisely the number $N_{1}(S,\mathscr{L})$ of 1-nodal curves in the linear system $|\mathscr{L}|$ through $N-1$ points in general position on $S.$

\begin{proposition}\label{prop:a1_linear}
The integer $a_{1}(S,\mathscr{L})$ is given by evaluating a linear polynomial in four variables in the four Chern numbers $(\partial,k,s,x)$ of $(S,\mathscr{L}).$ More precisely, we have
\begin{equation}
a_{1}(S,\mathscr{L}) = 3\partial + 2k + x.
\end{equation}
\end{proposition}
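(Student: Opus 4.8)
The plan is to compute the pushforward $\gamma_{1\ast}\xi$ explicitly using the identification $\xi = c_3(\mathscr{P}^1_{F/Y}(\widetilde{\mathscr{L}}))$, which is valid here because, as Example \ref{ex:critical_locus_P2} illustrates in the model case and the vertical exact sequence guarantees in general, the critical locus $X$ has the expected codimension $3$ in $F$ (this is where $(5r-1)$-, or even $1$-, very ampleness of $\mathscr{L}$ enters: it forces the section $z'$ to be suitably transverse). So the first step is to write out the total Chern class of $\mathscr{P}^1_{F/Y}(\widetilde{\mathscr{L}})$ from the vertical exact sequence displayed above, namely
\begin{displaymath}
c(\mathscr{P}^1_{F/Y}(\widetilde{\mathscr{L}})) = c(\Omega^1_{F/Y}\otimes\widetilde{\mathscr{L}})\cdot c(\widetilde{\mathscr{L}}),
\end{displaymath}
and then extract $c_3$. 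Since $F = S\times Y$ and the map to $Y$ is the projection $\gamma_1$, we have $\Omega^1_{F/Y} = \gamma_2^{\ast}\Omega^1_S$ where $\gamma_2$ is the projection to $S$; writing $\gamma_2^{\ast}c_1(\Omega^1_S) = -\gamma_2^{\ast}c_1(S)$ and $\gamma_2^{\ast}c_2(\Omega^1_S) = \gamma_2^{\ast}c_2(S)$, and $c_1(\widetilde{\mathscr{L}}) = \gamma_2^{\ast}c_1(\mathscr{L}) + \gamma_1^{\ast}H =: \ell + h$, one obtains $c_3$ as an explicit polynomial in $\ell$, $h$, and the pulled-back Chern classes of $S$.

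The second step is the pushforward. Because $A^{\ast}(F) = A^{\ast}(S)\otimes A^{\ast}(Y)$ and $\gamma_{1\ast}$ is integration over the surface fibre, only the part of $c_3(\mathscr{P}^1_{F/Y}(\widetilde{\mathscr{L}}))$ that is of degree $2$ in the $A^{\ast}(S)$-factor survives, and it contributes its $S$-degree times $H$. Concretely, $\gamma_{1\ast}\xi$ picks out the coefficient, in $\gamma_2^{\ast}(A^2(S))$, of $c_3$, evaluated against the fundamental class of $S$; since $\ell^2 = \partial$, $\ell\cdot(-c_1(S)) = k$ (note $c_1(S) = -c_1(\mathscr{K}_S)$), $c_1(S)^2 = s$, and $c_2(S) = x$, every monomial turns into one of the four Chern numbers. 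Carrying out this bookkeeping should collapse to $\gamma_{1\ast}\xi = (3\partial + 2k + x)H$, giving both the linearity assertion and the stated formula. (As a sanity check, note that the $s = \mathscr{K}_S^2$ term must drop out, which is consistent with the fact that $c_2(S)$ and $c_1(S)^2$ cannot both appear if $c_3$ is at most linear in $c_2(S)$ — and indeed the coefficient of $s$ in $a_1$ listed in the introduction is $0$.)

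The main obstacle, and the only genuinely delicate point, is the transversality/expected-dimension claim: one must know that $[X] = c_3(\mathscr{P}^1_{F/Y}(\widetilde{\mathscr{L}}))$ rather than merely $[X]$ being supported on a refined class when components of excess dimension occur. For $a_1$ this is comparatively benign — under the ampleness hypothesis the generic member of $|\mathscr{L}|$ is smooth and $1$-nodal curves form a divisor whose total space is irreducible of the right dimension — and one can also verify it directly in coordinates as in Example \ref{ex:critical_locus_P2} and then invoke a deformation/degeneration argument, or simply cite the transversality results underlying \cite{KP1}. Everything else is a finite Chern-class computation in $A^{\ast}(S\times Y)$, which I would not grind through here beyond noting that it is routine and that the vertical exact sequence above supplies exactly the input needed.
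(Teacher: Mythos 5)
Your proposal is correct and follows essentially the same route as the paper: both identify $\xi=c_{3}(\mathscr{P}^{1}_{F/Y}(\widetilde{\mathscr{L}}))$, expand it via the Whitney formula for the exact sequence $0\rightarrow\Omega^{1}_{F/Y}\otimes\widetilde{\mathscr{L}}\rightarrow\mathscr{P}^{1}_{F/Y}(\widetilde{\mathscr{L}})\rightarrow\widetilde{\mathscr{L}}\rightarrow 0$ to get $\xi=(L+H)^{3}+K(L+H)^{2}+x(L+H)$, and push forward by integrating over the surface factor to extract the coefficient of $H$, namely $3\partial+2k+x$. The only difference is that you explicitly flag the expected-codimension hypothesis needed for $[X]=c_{3}(\mathscr{P}^{1}_{F/Y}(\widetilde{\mathscr{L}}))$, which the paper addresses in its setup (Section \ref{sec:basic_setup}) rather than inside the proof itself.
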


\begin{proof}
We have $a_{1}(S,\mathscr{L}) = \gamma_{1\ast}\xi,$ with $\xi \in A^{\ast}(F)$ the class of $X,$ i.e., $c_{3}(\mathscr{P}_{F/Y}^{1}(\widetilde{\mathscr{L}})).$ Hence, putting $v:=c_{1}(\widetilde{\mathscr{L}})$ and $w_{j} = c_{j}(\Omega^{1}_{F/Y})$ for $j=1,2,$ the exact sequence
\begin{displaymath}
0 \rightarrow \Omega^{1}_{F/Y} \otimes \widetilde{\mathscr{L}} \rightarrow \mathscr{P}^{1}_{F/Y}(\widetilde{\mathscr{L}}) \rightarrow \widetilde{\mathscr{L}} \rightarrow 0
\end{displaymath}
yields $\xi = v^{3} + v^{2}w_{1} + vw_{2},$ which is a class of codimension 3 on $F.$ Let $\nu$ and $\gamma_{1}$ be the projections from $F = S \times Y$ to $S$ and $Y,$ respectively. Let $L := c_{1}(\mathscr{L}),$ $K: = c_{1}(\mathscr{K}_{S})$ and $H$ be the class of a hyperplane in $Y.$ For simplicity, let $L, K$ and $H$ also denote their own pullbacks (via $\nu$ and $\gamma_{1}$) to $F.$ Then $v = L + H$ and $w_{j} = c_{j}(\Omega^{1}_{F/Y}) = c_{j}(\nu^{\ast}\Omega^{1}_{S}) = \nu^{\ast}c_{j}(\Omega^{1}_{S}).$ We therefore get $w_{1} = \nu^{\ast}c_{1}(\Omega^{1}_{S}) = \nu^{\ast}c_{1}(\textnormal{det }\Omega^{1}_{S}) = \nu^{\ast}K,$ whereas $w_{2} = \nu^{\ast}c_{2}(\Omega^{1}_{S}) = \nu^{\ast}c_{2}(S).$ This gives us
\begin{equation}
\xi = (L+H)^{3} + K(L+H)^{2} + x(L+H).
\end{equation}
This can be seen as a polynomial in $H,$ and when pushing down to $Y,$ only the terms of first order in $H$ survive, so $a_{1}(S,\mathscr{L})H = \gamma_{1\ast}\xi = (3L^{2})H + (2LK)H + xH.$ Hence we conclude that $a_{1}(S,\mathscr{L}) = 3\partial + 2k + x.$
\end{proof}

A natural candidate for a scheme parametrizing curves with $r$ marked nodes would be the fibered product $X \times_{Y} \ldots \times_{Y} X$ with $r$ factors (geometrically, the fiber product ensures that we get $r$ marked nodes on the same curve, represented by a point in $Y$). There are, however, two major problems, both of which appear already for $r = 2.$ Several loci appear in the scheme $X \times_{Y} X$:
\begin{enumerate}
 \item a locus parametrizing binodal curves with marked nodes;
\item the diagonal $\Delta_{X},$ parametrizing nodal curves with a marked node;
\item the cuspidal locus, parametrizing cuspidal curves with a marked cusp.
\end{enumerate}
The diagonal is an excess locus; its dimension is $N-1,$ while the expected dimension of $X \times_{Y} X$ is $N-2.$ The cuspidal locus has the correct dimension, and is embedded in the diagonal (since there is only one singularity). Consequently, if we remove the intersection theoretical contribution of $\Delta_{X}$ to the intersection product $X_1 \cdot X_2,$ we get (up to a multiplicative factor of 2, due to the intrinsic symmetry of $X \times_{Y} X$) the number of 2-nodal curves plus the number of cuspidal curves in $|\mathscr{L}|$. Subtracting this last number and dividing by 2 yields the number of binodal curves in $|\mathscr{L|}.$ 

Intersection theoretically, the procedure is to intersect the pullbacks $p_{i}^{\ast}\xi, i = 1,2,$ with $p_{i}$ the projections $F \times_{Y} F \rightarrow F,$ then remove a certain excess class $B_{2}$ which represents the proper contribution of the diagonal and the contribution of the embedded cuspidal locus to the intersection product. We then wish to find the pushdown to $Y$ of this rational equivalence class, i.e., the class
\begin{displaymath}
\gamma_{2\ast} \bigl((p_{1}^{\ast}\xi \cdot p_{2}^{\ast}\xi) - B_{2}\bigr) \in A^{2}(Y),
\end{displaymath}
where $\gamma_{2}: F \times_{Y} F \rightarrow Y$ is the natural projection.

It should be obvious that for higher values of $r,$ the problem of the diagonals becomes more and more intricate.

\begin{definition}\label{def:curly_bracket}
For $F$ a smooth scheme of dimension $n,$ and $\alpha \in A^{\ast}(F),$ we let $\{\alpha\}^{k}$ denote the $k$-codimensional part of $\alpha,$ an element in $A^{k}(F).$ Similarly, we let $\{\alpha\}_{k}$ denote the $k$-dimensional part, an element in $A_{k}(F).$ \hfill $\blacksquare$
\end{definition}

\begin{example}\label{ex:two_nodes}
We will illustrate in more detail the enumeration of 2-nodal curves in the above setting. The idea is to consider the intersection class $p_{1}^{\ast} \xi \cdot p_{2}^{\ast}\xi,$ and subtract the excess coming from the diagonal and the embedded cuspidal locus, supported on the diagonal. Cuspidal curves in $|\mathscr{L}|$ are enumerated by a polynomial which is provided in, for example, Kazarian's paper \cite[Example 10.2]{Kaz}. In his notation, this is $S_{A_{2}} = 12\partial + 12k + 2s +2x.$ The diagonal $\Delta_{X}$ being a set-theoretically connected component of the intersection $p_{1}^{-1}(X) \cap p_{2}^{-1}(X) \cong X^{\times 2},$ we can use Proposition 9.1.1 in \cite{Ful} to compute its proper contibution to the intersection product. In our case the computation takes place on $F^{\times 2},$ and we get a class in $A_{m}(F^{\times 2})$ where $m = \textnormal{dim}(F^{\times 2}) - \sum_{i=1}^{2} \textnormal{codim}(p_{i}^{-1}X, F^{\times 2}) = 4 + \textnormal{dim }Y - 2 \cdot 3 = \textnormal{dim }Y - 2,$ namely
\begin{equation}
\left\{c\Big(\left(p_{1}^{\ast}N_{X}F\right) |\Delta_{X}\Big) \cdot c\Big(\left(p_{2}^{\ast}N_{X}F\right) |\Delta_{X}\Big) \cdot c\left( N_{\Delta_{X}}F^{\times 2} \right)^{-1} \cap [\Delta_{X}] \right\}_{N-2},
\end{equation}
representing the contribution of the diagonal itself to $p_{1}^{\ast}\xi \cdot p_{2}^{\ast}\xi.$ We want to find the pushdown of this class to $Y$ through $\gamma_{2} = \gamma_{1} \circ p_{1}.$ Since $\Delta_{X} \hookrightarrow \Delta_{F} \hookrightarrow F^{\times 2}$ are two regular embeddings, the normal bundle of the first being $N_{X}F \cong \mathscr{P}^{1}_{F/Y}(\widetilde{\mathscr{L}})$ and the one of the second being the pullback of $T_{F/Y} \cong T_{S},$ the class introduced above is equal to
\begin{equation}
\left\{ c\left(\mathscr{P}^{1}_{F/Y}(\widetilde{\mathscr{L}})\right) \cdot c(T_{F/Y})^{-1} \cap [X] \right\}_{N-2}.
\end{equation}
Recall our notations $L := c_{1}(\mathscr{L}),$ $K := c_{1}(\mathscr{K}_{S}),$ and $H$ is the class of a hyperplane in $Y.$ We also use $v = c_{1}(\widetilde{\mathscr{L}}) = L+H$ and $w_{j} = c_{j}(\Omega^{1}_{F/Y}),$ so that $w_{1} = \nu^{\ast}K$ and $w_{2} = \nu^{\ast}x,$ where $\nu$ is the projection from $F$ to $S.$ Now, we have $c_{1}(T_{F/Y}) = -w_{1}$ and $c_{2}(T_{F/Y})=w_{2}$ since $T_{F/Y}^{\vee} \cong \Omega^{1}_{F/Y}.$ Thus, $c(T_{F/Y})^{-1} = 1+w_{1}+(w_{1}^{2}-w_{2}).$ On the other hand, the exact sequence
\begin{equation}
0 \rightarrow \Omega^{1}_{F/Y} \otimes \widetilde{\mathscr{L}} \rightarrow \mathscr{P}^{1}_{F/Y}(\widetilde{\mathscr{L}}) \rightarrow \widetilde{\mathscr{L}} \rightarrow 0
\end{equation}
yields, by the Whitney sum formula, $c(\mathscr{P}^{1}_{F/Y}(\widetilde{\mathscr{L}})) = c(\Omega^{1}_{F/Y} \otimes \widetilde{\mathscr{L}}) \cdot c(\widetilde{\mathscr{L}}).$ Thus, considering Chern polynomials:
\begin{eqnarray*}
c_{t}(\Omega^{1}_{F/Y} \otimes \widetilde{\mathscr{L}}) & = & \sum_{i=0}^{2} t^{i} (1 + tc_{1}(\widetilde{\mathscr{L}}))^{2-i}c_{i}(\Omega^{1}_{F/Y}) \\
& = & \bigl(1 + t(L+H)\bigr)^{2} + t\bigl(1+t(L+H)\bigr)w_{1} + t^{2}w_{2}.
\end{eqnarray*}
Also, we have $[X] = \xi = (L+H)^{3} + K(L+H)^{2} + x(L+H).$ What we want is the degree 2 part of the coefficient of $H^{2}$ in the expansion of
\begin{equation}
c(\Omega^{1}_{F/Y} \otimes \widetilde{\mathscr{L}}) \cdot c(\widetilde{\mathscr{L}}) \cdot c(T_{F/Y})^{-1} \cap [X],
\end{equation}
when considering $K$ and $L$ to have degree 1 and $x$ to have degree 2. A simple computation in, for instance, \verb+Maple+, yields the following polynomial:
\begin{equation}
Q_{2} := 18\partial + 15k + 2s + 3x.
\end{equation}
We see that $Q_{2} + 2S_{A_{2}} = 18\partial + 15k + 2s + 3x + 2 \cdot (12\partial + 12k + 2s +2x) = 42\partial+39k+6s+7x,$ which is precisely the polynomial $-a_{2}(\partial,k,s,x)$ of Kleiman--Piene. On the other hand, the pushdown to $Y$ of the intersection product $p_{1}^{\ast}\xi \cdot p_{2}^{\ast}\xi$ is equal to $a_{1}^{2}H^{2}$ where $a_{1}H = \gamma_{1\ast}\xi = (3\partial+2k+x)H.$ In total, the pushdown of the class representing honest 2-nodal curves is $(a_{1}^{2} + a_{2})H^{2}.$ Divide this by 2 to avoid recountings due to permutations of the nodes; the result is, up to a factor $H^{2},$ the number of 2-nodal curves through $N-2$ points in general position on $S.$ \hfill $\blacksquare$
\end{example}

\section{Shape of node polynomials}\label{sec:shape}
For greater values of $r$ there are several diagonals which appear, as well as their intersections, which we refer to as \textit{polydiagonals}. There is a bijection between polydiagonals in $X^{\times r}$ and non-singleton partitions $\pi$ of $[r] := \{1,\ldots, r\}.$ Indeed, a partition is of a set of disjoint subsets of $[r]$ whose union is equal to $[r].$ These subsets are called \textit{blocks} of the partition. Denote by $\Pi_{r}$ the set of all partitions of $[r],$ and by $\Pi_{r}^{\circ}$ the set of non-singleton partitions, the singleton partition being $\widehat{0}_{r} := 1|2|\ldots |r,$ i.e., the only partition with $r$ blocks. Then $\pi \in \Pi_{r}^{\circ}$ corresponds to the polydiagonal
\begin{equation}
\Delta^{(r)}_{\pi} := \{(x_{1}, \ldots, x_{r}) \in X^{\times r}, x_{i} = x_{j} \textnormal{ if } i \textnormal{ and } j \textnormal{ are in the same block of } \pi \}
\end{equation}
in $X^{\times r}.$ We denote by $\widehat{1}_{r}$ the single-block partition $12 \ldots r.$ If there is no room for confusion, we use $\widehat{0}$ and $\widehat{1}$ instead of $\widehat{0}_{r}$ and $\widehat{1}_{r}.$

It is a well-known fact that imposing $r$ nodes on the curves in a system is a codimension $r$ requirement. Hence the dimension of the \textit{configuration space} $\mathbb{F}(X,r)$ (i.e., the complement of the diagonals in $X^{\times r}$) is equal to $N - r,$ where $N = \textnormal{dim }Y.$ The union of the scheme-theoretic polydiagonals, however, is a connected component of $X^{\times r}$ of dimension $N-1,$ since it contains the small diagonal $\Delta^{(r)}_{12\ldots r} \cong X.$

Letting $p_{j}: F^{\times r} \rightarrow F, 1 \leq j \leq r,$ denote the projections, we make the following \textit{ad hoc} definition, whose importance will be made clear in the following:

\begin{definition}\label{def:equiv}
Let $r \geq 1.$ For each $\hat{0} \neq \pi \in \Pi_{r},$ we let $B^{(r)}_{\pi} \in A_{\ast}(\Delta^{(r)}_{\pi})$ denote the equivalence (in the sense of \cite[Definition 6.1.2]{Ful}) of the closed subset $\Delta^{(r)}_{\pi}$ for the intersection product $p_{1}^{\ast}\xi \cdot \ldots \cdot p_{r}^{\ast}\xi.$ Also, we let $B^{(r)}_{\hat{0}} \in A_{\ast}(X^{\times r})$ denote the intersection product itself. Furthermore, define
\begin{displaymath}
a_{i}(S,\mathscr{L}) := (-1)^{i-1}(i-1)!\int_{Y} f_{\ast}B^{(i)}_{1\ldots i} \in \mathbb{Z},
\end{displaymath}
where $f: X \rightarrow Y$ is the composition of the embedding $\iota: X \hookrightarrow F$ and the projection $F = S \times Y \rightarrow Y.$ \hfill $\blacksquare$
\end{definition}

\begin{remark}\label{rem:all_dist}
We would like to emphasize the fact that we are not simply considering the proper contribution of $\Delta^{(r)}_{\pi}$ to the intersection product $p_{1}^{\ast}\xi \cdot \ldots \cdot p_{r}^{\ast}\xi,$ but the contribution of all distinguished varieties whose support is contained in this polydiagonal. 
\end{remark}

\begin{definition}\label{def:comp_bell}
The \textit{complete (exponential) Bell polynomials} are defined by the formal identity in $t,$
\begin{equation}\label{eqn:formal_id}
\sum_{r \geq 0}P_{r}t^{r}/r! = \exp \left(\sum_{l \geq 1} x_{l}t^{l}/l! \right).
\end{equation}
\hfill $\blacksquare$
\end{definition}

\begin{example}\label{ex:bell_polys}
The first four Bell polynomials are easily seen to be:
\begin{eqnarray*}
P_1(x_1) & = &  x_1 \\
P_2(x_1,x_2) & = & x_1^2 + x_2 \\
P_3(x_1,x_2,x_3) & = & x_1^3 + 3x_1x_2 + x_3 \\
P_4(x_1,x_2,x_3,x_4) & = & x_1^4 + 6x_1^2x_2 + 4x_1x_3 + 3x_2^2 + x_4
\end{eqnarray*}
\hfill $\blacksquare$
\end{example}

One can also define partial Bell polynomials:

\begin{definition}\label{def:part_bell}
The \textit{partial Bell polynomials} are defined for all $n \geq 1$ and all $1 \leq l \leq n,$ by the following formula:
\footnotesize
\begin{displaymath}
P_{n,l}(x_{1}, x_{2}, \ldots, x_{n-l+1}) := \sum \frac{n!}{j_{1}!j_{2}! \ldots j_{n-l+1}!} \left(\frac{x_{1}}{1!}\right)^{j_{1}}\left(\frac{x_{2}}{2!}\right)^{j_{2}} \ldots \left(\frac{x_{n-l+1}}{(n-l+1)!}\right)^{j_{n-l+1}},
\end{displaymath}
\normalsize
where we sum over all tuples of integers $j_{1}, \ldots, j_{n-l+1} \geq 0$ such that $j_{1} + \ldots + j_{n-l+1} = l$ and $j_{1} + 2j_{2} + \ldots + (n-l+1)j_{n-l+1} = n.$ \hfill $\blacksquare$
\end{definition}

Combinatorically, the coefficient in front of $x_{1}^{j_{1}}x_{2}^{j_{2}} \ldots x_{n-l+1}^{j_{n-l+1}}$ is interpreted as the number of ways to partition a set of $n$ elements into $l$ blocks where $j_{1}$ blocks have 1 element, $j_{2}$ have 2 elements etc., the members of the set being indistinguishable. The complete Bell polynomials are the sum of the partial ones:
\begin{equation}
P_{n}(x_{1},\ldots,x_{n}) = \sum_{l=1}^{n}P_{n,l}(x_1,x_2,\ldots,x_{n-l+1}).
\end{equation}

The object of this section is to show the following theorem:

\begin{theorem}\label{thm:shape}
Let $(S,\mathscr{L})$ be a polarized smooth, irreducible projective surface over $\mathbb{C}$ and let $r \geq 1$ be an integer. Then, provided $\mathscr{L}$ is $r$-very ample, the number $N_{r}(S,\mathscr{L})$ of $r$-nodal curves in the linear system $|\mathscr{L}|$ is given by
\begin{displaymath}
N_{r}(S,\mathscr{L}) = \frac{P_{r}(a_{1}(S,\mathscr{L}), \ldots, a_{r}(S,\mathscr{L}))}{r!},
\end{displaymath}
where $P_{r}$ is the $r$th complete Bell polynomial.
\end{theorem}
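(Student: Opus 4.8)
The plan is to combine two ingredients: an inclusion–exclusion identity that expresses the honest $r$-nodal count as the pushdown of the full intersection product $p_1^\ast\xi\cdots p_r^\ast\xi$ minus the contributions of all polydiagonals, and the multiplicativity of those polydiagonal contributions under the partition lattice, which is exactly the combinatorial input that makes Bell polynomials appear. First I would set up the configuration-space interpretation. The locus in $\mathbb F(X,r)$, the complement of all diagonals in $X^{\times r}$, parametrizes (ordered) $r$-tuples of distinct marked singular points on a common curve; since $\mathscr L$ is $r$-very ample, a dimension count (imposing $r$ nodes is codimension $r$, and $r$-very ampleness guarantees the expected dimension is achieved and that all such singularities are honest nodes on honest $r$-nodal curves) shows this locus has the expected dimension $N-r$ and that its class, pushed to $Y$, has degree $r!\,N_r(S,\mathscr L)$ — the factor $r!$ accounting for orderings of the nodes. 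So it suffices to compute $\int_Y \gamma_{r\ast}\big(p_1^\ast\xi\cdots p_r^\ast\xi - \sum_{\hat 0\neq\pi\in\Pi_r} (\text{pushforward of } B^{(r)}_\pi)\big)$ and identify it with $P_r(a_1,\dots,a_r)$.

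Next I would establish the key multiplicativity statement: for a partition $\pi\in\Pi_r^\circ$ with blocks $\beta_1,\dots,\beta_l$ of sizes $n_1,\dots,n_l$, the polydiagonal $\Delta^{(r)}_\pi$ is isomorphic (over $Y$) to the small-diagonal product $X^{\times l}$ (one copy of $X$ per block), and under this identification the equivalence class $B^{(r)}_\pi$ — the contribution of $\Delta^{(r)}_\pi$ to $p_1^\ast\xi\cdots p_r^\ast\xi$ including all distinguished varieties supported on it (Remark~\ref{rem:all_dist}) — factors, after pushing down to $Y$, as a product of the corresponding single-block contributions: $\int_Y f_\ast B^{(n_j)}_{1\ldots n_j}$ for each block. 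This is where Proposition~\ref{prop:isom} and the excess-intersection/residual machinery enter: on each block one is computing the contribution of the small diagonal $X\hookrightarrow X^{\times n_j}\hookrightarrow F^{\times n_j}$ together with the distinguished subvarieties it carries, exactly as in Example~\ref{ex:two_nodes} for $n_j=2$; the normal bundle computations there generalize, and the fiber-product structure over $Y$ makes the global class split as a product of one class per block times the "transverse" intersection of the remaining honest factors. Writing $b_n := \int_Y f_\ast B^{(n)}_{1\ldots n}$, one has $b_1 = a_1(S,\mathscr L)$ and $a_i = (-1)^{i-1}(i-1)!\,b_i$ by Definition~\ref{def:equiv}.

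With multiplicativity in hand, the inclusion–exclusion over the partition lattice becomes a Möbius-function computation. The honest ordered $r$-nodal number is
\[
r!\,N_r(S,\mathscr L) = \sum_{\pi\in\Pi_r}\mu(\pi,\hat 1_r)^{-1}\cdot(\cdots),
\]
but more cleanly: the total intersection product $\int_Y\gamma_{r\ast}(p_1^\ast\xi\cdots p_r^\ast\xi)$ decomposes as $\sum_{\pi\in\Pi_r}\prod_{\text{blocks }\beta\text{ of }\pi} b_{|\beta|}$, since every distinguished variety of the product is supported on a unique minimal polydiagonal, and the full contribution supported on $\Delta^{(r)}_\pi$ (for the finest such $\pi$) is $\prod b_{|\beta|}$; by Möbius inversion on $\Pi_r$ the honest (finest-partition) term is
\[
r!\,N_r(S,\mathscr L) = \sum_{\pi\in\Pi_r}\Big(\prod_{\beta\in\pi}\mu(\hat 0,\pi_\beta)\Big)\cdots
\]
— and here I would invoke the classical identity that $\sum_{\pi\in\Pi_r}\prod_{\beta\in\pi} y_{|\beta|}$, summed with the appropriate sign/multiplicity, is precisely $P_r$ evaluated at $x_l = (-1)^{l-1}(l-1)!\,y_l$. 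Concretely: the complete Bell polynomial satisfies $P_r(x_1,\dots,x_r) = \sum_{\pi\in\Pi_r}\prod_{\beta\in\pi} x_{|\beta|}$ where the sum is over \emph{unordered} set partitions, and substituting $x_l = (-1)^{l-1}(l-1)!\,b_l = a_l(S,\mathscr L)$ reproduces exactly the inclusion–exclusion weights — the $(l-1)!$ is the number of cyclic orderings / the Möbius factor $|\mu(\hat 0_l,\hat 1_l)|$ on the partition lattice of an $l$-set, and the sign $(-1)^{l-1}$ is its sign. This yields $N_r(S,\mathscr L) = P_r(a_1,\dots,a_r)/r!$.

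The main obstacle is the multiplicativity of the polydiagonal contributions $B^{(r)}_\pi$, specifically proving that the \emph{entire} contribution of all distinguished varieties supported on $\Delta^{(r)}_\pi$ — not merely the proper term from the connected component, but also the pieces coming from deeper singularities (cusps, tacnodes, etc.) embedded inside the diagonal, as flagged in Remark~\ref{rem:all_dist} — factors over the blocks after pushing to $Y$. This requires a careful residual-intersection analysis (in the spirit of Fulton, Chapters 6 and 9) showing that the distinguished varieties of $p_1^\ast\xi\cdots p_r^\ast\xi$ supported on $\Delta^{(r)}_\pi$ are in bijection with tuples (one per block) of distinguished varieties of the corresponding small-block products, and that their coefficients and classes multiply; the $r$-very ampleness hypothesis is what guarantees there is no further excess coming from the "honest" transverse directions and that the bookkeeping closes. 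The remaining steps — the dimension count, the normal-bundle identifications via Proposition~\ref{prop:isom}, and the purely combinatorial Bell/Möbius identity — are then essentially formal.
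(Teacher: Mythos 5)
Your proposal is correct and follows essentially the same route as the paper: the honest ordered count is obtained by subtracting from $p_{1}^{\ast}\xi \cdot \ldots \cdot p_{r}^{\ast}\xi$ the polydiagonal contributions weighted by the M\"obius function of $\Pi_{r}$ (the paper's Lemma~\ref{lemma:mobius_coeffs}), the classes $B^{(r)}_{\pi}$ are shown to factor over the blocks after pushdown to $Y$ via the correspondence of distinguished varieties and normal-cone components (Proposition~\ref{prop:splitting}), the $r$-very ampleness is used exactly as you indicate through \cite[Proposition 2.1]{KST} to confine all non-$r$-nodal degenerations to the polydiagonals, and the sum over partitions then assembles into the complete Bell polynomial. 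You also correctly single out the multiplicativity of the \emph{full} (not merely proper) polydiagonal contribution as the technical crux; the only blemish is the intermediate display suggesting the total product itself decomposes as $\sum_{\pi}\prod_{\beta} b_{|\beta|}$, which is not right as stated, but your final M\"obius-weighted identity is the correct one and matches the paper.
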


Consider the fiber product $F^{\times r} = F \times_{Y} \ldots \times_{Y} F,$ with $r$ projections $p_{j}$ to $F.$ The $r$-fold fiber product $X \times_{Y} \ldots \times_{Y} X$ is equal to $p_{1}^{-1}(X) \cap \ldots \cap p_{r}^{-1}(X).$ As a starting point for enumerating $r$-nodal curves in $|\mathscr{L}|,$ one could consider the intersection product
\begin{displaymath}
p_{1}^{\ast}\xi \cdot \ldots \cdot p_{r}^{\ast}\xi \in A^{\ast}(F^{\times r}).
\end{displaymath}
However, the polydiagonals give an excess contribution to this intersection, which we want to remove. This motivates the following definition: 


\begin{definition}\label{def:class_I}
We denote by $I_{r}$ the intersection class $p_{1}^{\ast}\xi \cdot \ldots \cdot p_{r}^{\ast}\xi$ minus the equivalence of the union of the polydiagonals. More precisely, recall that $\Pi_{r}^{\circ}$ denotes the set of partitions of $[r],$ $1|2|\ldots|r$ excluded, then
\begin{equation}
I_{r} := p_{1}^{\ast}\xi \cdot \ldots \cdot p_{r}^{\ast}\xi - \left(p_{1}^{\ast}\xi \cdot \ldots \cdot p_{r}^{\ast}\xi\right)^{\bigcup_{\pi \in \Pi_{r}^{\circ}}\Delta^{(r)}_{\pi}}.
\end{equation} \hfill $\blacksquare$
\end{definition}

We now want to express $I_r$ using the classes  $B^{(r)}_{\pi}.$ For this, we need some notation. If $\pi$ and $\pi'$ are two partitions in $\Pi_{r},$ we write $\pi' \prec \pi$ if each block of $\pi'$ is contained in a block of $\pi,$ i.e., if the partition $\pi'$ is a refinement of the partition of $\pi.$ The number of blocks of a partition $\pi$ is denoted by $|\pi|.$ Thus, the singleton partition $\widehat{0} = 1|2|\ldots|r$ is the only partition $\pi$ of $[r]$ such that $|\pi| = r.$

\begin{lemma}\label{lemma:mobius_coeffs}
We have
\begin{equation}
I_{r} = \sum_{\pi \in \Pi_{r}} n^{(r)}_{\pi} B^{(r)}_ {\pi},
\end{equation}
where the coefficients $\{n^{(r)}_{\pi}\}$ are defined as follows: For $\pi \in \Pi_{r},$ let $s_{i}(\pi)$ denote the number of blocks of size $i$ in $\pi,$ where $1 \leq i \leq r.$ Then
\begin{equation}
n^{(r)}_{\pi} = \prod_{i=1}^{r} \left[(-1)^{i-1}(i-1)!\right]^{s_{i}(\pi)}.
\end{equation}
\end{lemma}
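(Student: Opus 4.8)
The plan is to derive the formula by Möbius-style inclusion–exclusion on the partition lattice $\Pi_r$, using the multiplicativity of equivalences under products of regular embeddings. First I would observe that the term $\left(p_{1}^{\ast}\xi \cdot \ldots \cdot p_{r}^{\ast}\xi\right)^{\bigcup_{\pi \in \Pi_{r}^{\circ}}\Delta^{(r)}_{\pi}}$ is, by the additivity of equivalences over distinguished varieties (\cite[Definition 6.1.2]{Ful} and the surrounding discussion), the sum of the equivalences of all distinguished varieties supported on the union of the $\Delta^{(r)}_{\pi}$. Since the polydiagonals satisfy $\Delta^{(r)}_{\pi'} \subseteq \Delta^{(r)}_{\pi}$ exactly when $\pi \prec \pi'$, every distinguished variety supported on the union $\bigcup_{\pi \in \Pi_{r}^{\circ}}\Delta^{(r)}_{\pi}$ is in fact supported on some single $\Delta^{(r)}_{\pi}$ with $\pi$ maximal, so this sum reorganizes as a sum over $\pi \in \Pi_{r}^{\circ}$ of the classes $B^{(r)}_{\pi}$ (which, by definition, already collect all distinguished varieties supported in $\Delta^{(r)}_{\pi}$, cf. Remark \ref{rem:all_dist}). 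Hence $I_r = B^{(r)}_{\hat{0}} - \sum_{\pi \in \Pi_{r}^{\circ}} B^{(r)}_{\pi} = \sum_{\pi \in \Pi_r} \varepsilon_\pi B^{(r)}_{\pi}$ with $\varepsilon_{\hat{0}} = 1$ and $\varepsilon_\pi = -1$ for $\pi \neq \hat{0}$ — but this naive sign is \emph{not} the claimed $n^{(r)}_\pi$, which is the point where the argument becomes substantive.

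The resolution is that the naive $\pm 1$ weighting of distinguished varieties overcounts: a distinguished variety supported on $\Delta^{(r)}_{\pi}$ is \emph{also} supported on every $\Delta^{(r)}_{\pi'}$ with $\pi' \succ \pi$, so when we subtract "the equivalence of the union of polydiagonals" correctly (rather than subtracting each $B^{(r)}_{\pi}$ separately) we must pass to the honest decomposition of $p_1^\ast\xi\cdots p_r^\ast\xi$ as $\sum_{\pi\in\Pi_r}$ of a \emph{single} well-defined contribution per polydiagonal, and then re-expand. Concretely, I would first prove the recursion: the equivalence $B^{(r)}_{\pi}$ of a polydiagonal indexed by a partition with blocks of sizes $\lambda_1,\dots,\lambda_m$ (i.e., $\Delta^{(r)}_{\pi}\cong X^{\times m}$ up to the diagonal identifications) factors, via the multiplicativity of equivalence classes under fiber products of the relevant regular embeddings (as in the $r=2$ computation of Example \ref{ex:two_nodes}), as a product $\prod_{j=1}^m B^{(\lambda_j)}_{\widehat{1}_{\lambda_j}}$ pulled back appropriately. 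Thus the "new" contribution localized at a single-block diagonal $\Delta^{(i)}_{\widehat{1}_i}$ is the class $f_\ast B^{(i)}_{1\ldots i}$, whose integral is by Definition \ref{def:equiv} equal to $(-1)^{i-1}(i-1)!\,a_i(S,\mathscr{L})$ — i.e., each single-block diagonal of size $i$ carries the weight $(-1)^{i-1}(i-1)!$, not $-1$.

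Having established the recursion, the combinatorial step is a standard generating-function identity on $\Pi_r$: define for a partition $\pi$ with $s_i(\pi)$ blocks of size $i$ the weight $w_\pi := \prod_{i}\bigl[(-1)^{i-1}(i-1)!\bigr]^{s_i(\pi)}$, and check that the inclusion–exclusion over the union of polydiagonals, once the multiplicative recursion is fed in, produces exactly $I_r = \sum_{\pi\in\Pi_r} w_\pi B^{(r)}_\pi$. The cleanest way is to run the whole computation at the level of the exponential generating function $\sum_r (\text{class})\,t^r/r!$: the product structure of $F^{\times r}$ together with the factorization of $B^{(r)}_\pi$ turns the passage from "raw product $p_1^\ast\xi\cdots p_r^\ast\xi$" to "$I_r$, with all polydiagonal contributions removed and the diagonal-localized pieces isolated" into the statement that the EGF of the $f_\ast I_r$ is the composition/exponential governed by the EGF of the $f_\ast B^{(i)}_{1\ldots i}$, which is precisely the Bell-polynomial setup of Definition \ref{def:comp_bell}; comparing coefficients gives the $n^{(r)}_\pi$ as the partition-indexed weights $w_\pi$ above, since $\sum_{\pi\in\Pi_r} w_\pi \prod_i (\cdot)^{s_i(\pi)}$ is, combinatorially, the Faà di Bruno / partition expansion of such an exponential.

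The main obstacle I anticipate is making rigorous the claim that "the equivalence of the union of the polydiagonals decomposes as the asserted integer combination of the $B^{(r)}_\pi$" — i.e., that the set-theoretic inclusions $\Delta^{(r)}_{\pi'}\subset \Delta^{(r)}_\pi$ (for $\pi\prec\pi'$) interact with Fulton's theory of equivalences of distinguished varieties so as to yield exactly the Möbius-type coefficients $\prod_i [(-1)^{i-1}(i-1)!]^{s_i(\pi)}$ rather than some other weighting. This requires carefully: (i) identifying all distinguished varieties of the intersection product $p_1^\ast\xi\cdots p_r^\ast\xi$ and showing each is supported in a unique "smallest" polydiagonal; (ii) the multiplicativity of equivalences under the fiber-product structure, which is where the $(i-1)!$ factors enter (from the intrinsic symmetry / normal-bundle bookkeeping on $\Delta^{(i)}_{\widehat{1}_i}\cong X$, exactly as the factor $2$ appeared for $r=2$); and (iii) a clean induction on $r$ (or on the number of blocks) to assemble these into the partition sum. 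Everything else is the routine but bookkeeping-heavy verification that the resulting coefficients match $n^{(r)}_\pi$, which one can cross-check against the $r=2$ case in Example \ref{ex:two_nodes}.
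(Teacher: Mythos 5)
Your first paragraph is exactly where the paper starts: the equivalence of the union is the sum of the contributions of the distinguished varieties $Z$ supported on $\bigcup_{\pi}\Delta^{(r)}_{\pi}$, \emph{each counted once}, whereas the naive sum $\sum_{\pi\in\Pi_r^{\circ}}B^{(r)}_{\pi}$ counts the contribution of $Z$ once for every $\pi$ with $Z\subseteq\Delta^{(r)}_{\pi}$. But the way you then produce the coefficients has a genuine gap. The resolution is purely poset-combinatorial and needs none of the geometric input you invoke: since $\Delta^{(r)}_{\pi}\cap\Delta^{(r)}_{\pi'}=\Delta^{(r)}_{\pi\vee\pi'}$ and $Z$ is irreducible, the set of $\pi$ with $Z\subseteq\Delta^{(r)}_{\pi}$ is an interval $[\hat 0,\pi_Z]$ in $\Pi_r$; requiring that each term $(p_1^{\ast}\xi\cdot\ldots\cdot p_r^{\ast}\xi)^{Z}$ be subtracted exactly once forces $\sum_{\pi'\preceq\pi}n^{(r)}_{\pi'}=0$ for every $\pi\neq\hat 0$ (with $n^{(r)}_{\hat 0}=1$), i.e.\ $n^{(r)}_{\pi}=\mu(\hat 0,\pi)$, the M\"obius function of the partition lattice. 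The closed formula then follows from the classical facts $\mu(\hat 0_n,\hat 1_n)=(-1)^{n-1}(n-1)!$ and the product theorem for M\"obius functions \cite{Sta}. This is the paper's proof, and it is the actual source of the factors $(-1)^{i-1}(i-1)!$.

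Your proposal never performs this step. You attribute the factorials to ``intrinsic symmetry / normal-bundle bookkeeping on $\Delta^{(i)}\cong X$, exactly as the factor $2$ appeared for $r=2$'' --- but that factor $2$ is the $r!$ from unordering the nodes (it divides the final count in Theorem \ref{thm:shape}), not the coefficient $n^{(2)}_{12}$, which equals $-1$. Appealing to Definition \ref{def:equiv} is circular: the prefactor $(-1)^{i-1}(i-1)!$ there is a \emph{definition}, chosen precisely because of this lemma. The multiplicativity $B^{(r)}_{\pi}=\prod_{I}p_I^{\ast}B^{(|I|)}_{1\ldots |I|}$ is Proposition \ref{prop:splitting}, a separate and later result that is not needed here; and the EGF/Fa\`a di Bruno argument presupposes that the passage from the raw product to $I_r$ is governed by a logarithm with exactly these weights, which is the content of the lemma rather than a tool for proving it --- besides which the lemma is an identity of cycle classes on $F^{\times r}$, not of integers, so one cannot literally exponentiate classes living on different spaces. (A minor point: your claim that a variety supported on $\Delta^{(r)}_{\pi}$ is also supported on every $\Delta^{(r)}_{\pi'}$ with $\pi'\succ\pi$ is reversed; with the paper's convention it is the refinements $\pi'\prec\pi$ whose polydiagonals contain $\Delta^{(r)}_{\pi}$.)
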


\begin{proof}
We have
\begin{eqnarray*}
I_{r} & = & p_{1}^{\ast}\xi \cdot \ldots \cdot p_{r}^{\ast}\xi - \left(p_{1}^{\ast}\xi \cdot \ldots \cdot p_{r}^{\ast}\xi\right)^{\bigcup_{\pi \in \Pi_{r}^{\circ}}\Delta^{(r)}_{\pi}} \\
& = & p_{1}^{\ast}\xi \cdot \ldots \cdot p_{r}^{\ast}\xi - \sum_{Z \subseteq \bigcup_{\pi \in \Pi_{r}^{\circ}}\Delta^{(r)}_{\pi}} \left(p_{1}^{\ast}\xi \cdot \ldots \cdot p_{r}^{\ast}\xi\right)^{Z},
\end{eqnarray*}
where the $Z$s appearing in the index are distinguished varieties of the intersection product $p_{1}^{\ast}\xi \cdot \ldots \cdot p_{r}^{\ast}\xi$ support on the union of the polydiagonals. Since these $Z$s are irreducible, we have
\begin{equation}
-\sum_{Z \subseteq \bigcup_{\pi \in \Pi_{r}^{\circ}}\Delta^{(r)}_{\pi}} \left(p_{1}^{\ast}\xi \cdot \ldots \cdot p_{r}^{\ast}\xi\right)^{Z} =  \sum_{\pi \in \Pi_{r}^{\circ}}\sum_{Z \subseteq \Delta^{(r)}_{\pi}} n^{(r)}_{\pi} \left(p_{1}^{\ast}\xi \cdot \ldots \cdot p_{r}^{\ast}\xi\right)^{Z},
\end{equation}
where the $n^{(r)}_{\pi}$ are defined so that each term $\left(p_{1}^{\ast}\xi \cdot \ldots \cdot p_{r}^{\ast}\xi\right)^{Z}$ for some distinguished variety $Z$ supported on the union of the diagonals occurs only once. Starting with the ``largest'' polydiagonals, i.e., the $\Delta^{(r)}_{\pi}$ for which $|\pi| = r-1,$ the coefficient $n^{(r)}_{\pi}$ must be $-1.$ Then each term $\left(p_{1}^{\ast}\xi \cdot \ldots \cdot p_{r}^{\ast}\xi\right)^{Z}$ for $Z$ supported on some polydiagonal $\Delta^{(r)}_{\pi}$ with $|\pi| = r-2$ occurs $\sum_{\pi' \prec \pi} n^{(r)}_{\pi'}$ times, hence we must add them to the previous expression, but with a coefficient
\begin{equation}
n^{(r)}_ {\pi} := -1 - \sum_{\pi' \prec \pi, |\pi'| \neq r} n^{(r)}_{\pi'} = - \sum_{\pi' \prec \pi} n^{(r)}_{\pi'}
\end{equation}
to ensure they are only subtracted once. Now continue this way, using the principle of inclusion-exclusion. We recognize the definition of the coefficients $n^{(r)}_{\pi}$ as $n^{(r)}_{\pi} = \mu(\widehat{0}_r,\pi)$ with $\mu$ the M\"obius function of the poset $\Pi_{r}$ (cf. \cite[Section 3.9]{Sta}). Since we have
\begin{equation}
\mu_{n} := \mu(\widehat{0}_{n},\widehat{1}_{n}) = (-1)^{n-1}(n-1)! 
\end{equation}
by \cite[Example 3.10.4]{Sta}, and because of the product theorem for M\"obius functions \cite[Proposition 3.8.2]{Sta}, it follows that
\begin{equation}
n^{(r)}_{\pi} = \prod_{i=1}^{r} \left[(-1)^{i-1}(i-1)!\right]^{s_{i}(\pi)}.
\end{equation}
\end{proof}


For each $r \geq 1,$ it is clear that polydiagonals in $X^{\times r}$ are isomorphic, as schemes, to fibered products of small diagonals from the $X^{\times i}, i \leq r.$ For instance, in $X^{\times 6}$ we have
\begin{equation}
\Delta^{(6)}_{1|23|456} \cong X \times_{Y} \Delta^{(2)}_{12} \times_{Y} \Delta^{(3)}_{123}.
\end{equation}
So when passing from fewer than $r$ to $r$ nodes, what is new compared to previous cases --- from a structural point-of-view --- is the contribution to the intersection product $p_{1}^{\ast}\xi \cdot \ldots \cdot p_{r}^{\ast}\xi$ from the small diagonal $\Delta^{(r)}_{12\ldots r}.$ From the above, this contribution appears with the coefficient $(-1)^{r-1}(r-1)!,$ which is what motivates Definition \ref{def:equiv}.

Since $B^{(i)}_ {1\ldots i}$ is a class of dimension $N-i,$ the codimension of its pushdown in $Y$ becomes $i.$ We want to show that $\forall r \geq 2,$ the class $I_{r} = \sum_{\pi \in \Pi_{r}} n^{(r)}_{\pi} B^{(r)}_{\pi} \in A_{N-r}(F^{\times r})$ (each term having been pushed forward to a class on $F^{\times r}$) pushes down to the $r$th Bell polynomial in the classes $a_{i}(S,\mathscr{L})H^{i}, 1 \leq i \leq r$ on $Y.$ We need an intermediate result (to lighten the notation, we assume all classes are pushed forward to the appropriate ambient variety $F^{\times i}$):

\begin{proposition}\label{prop:splitting}
For any $r \geq 2$ and any $\pi \in \Pi_{r},$ we have the following equality of classes on $Y$ ($\prod$ denoting the intersection product $\cdot$):
\begin{equation}
\gamma_{r\ast}B^{(r)}_{\pi} = \prod_{i=1}^{r} \Bigl(\gamma_{i\ast} B^{(i)}_{1\ldots i} \Bigr)^{s_{i}(\pi)} \in A^{r}(Y).
\end{equation}
\end{proposition}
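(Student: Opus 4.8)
The plan is to reduce the statement to the multiplicativity of equivalences (in the sense of \cite[Definition 6.1.2]{Ful}) under fiber products, using the structural decomposition of polydiagonals recalled just before Proposition \ref{prop:splitting}. Given $\pi \in \Pi_{r}$ with $s_{i}(\pi)$ blocks of size $i$, I would first observe that there is a scheme isomorphism
\begin{displaymath}
\Delta^{(r)}_{\pi} \cong \prod_{i=1}^{r} \bigl(\Delta^{(i)}_{1\ldots i}\bigr)^{\times_{Y} s_{i}(\pi)},
\end{displaymath}
compatibly with the ambient inclusion into $F^{\times r} \cong \prod_{i} (F^{\times i})^{\times_{Y} s_{i}(\pi)}$, and that under this identification the restriction of $p_{1}^{\ast}\xi \cdot \ldots \cdot p_{r}^{\ast}\xi$ factors as an external-over-$Y$ product of the intersection classes $p_{1}^{\ast}\xi \cdot \ldots \cdot p_{i}^{\ast}\xi$ on the individual factors $F^{\times i}$, one copy for each block. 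The reason is simply that the $r$ pullbacks $p_{j}^{\ast}\xi$ are grouped block-by-block, and distinct blocks involve disjoint sets of factors of $F^{\times r}$.

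The key step is then to invoke the fact that the equivalence of a closed subset for a refined intersection product is multiplicative for products over a base: if $V \subseteq V'$ is the equivalence of a closed subset for an intersection product $\alpha$ on a variety $M$ over $Y$, and $W \subseteq W'$ likewise on $M'$ over $Y$, then $V \times_{Y} W$ is the equivalence of the corresponding closed subset for $\alpha \times_{Y} \beta$ on $M \times_{Y} M'$. This follows from the construction of the equivalence via the normal cone and Segre classes together with the compatibility of normal cones and Segre classes with fiber products (\cite[Chapter 4, Chapter 6]{Ful}); concretely, the normal cone of $\Delta^{(r)}_{\pi}$ in the relevant bundle decomposes as a $Y$-fiber product of the normal cones of the small diagonals, so the Segre class of the product is the product of the Segre classes, and the same holds after capping with the total Chern class appearing in the definition of the equivalence. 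Applying this block by block gives
\begin{displaymath}
B^{(r)}_{\pi} = \prod_{i=1}^{r} \bigl(B^{(i)}_{1\ldots i}\bigr)^{\times_{Y} s_{i}(\pi)}
\end{displaymath}
inside $A_{\ast}(\Delta^{(r)}_{\pi})$.

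Finally I would push forward to $Y$. Because the diagram of projections $\gamma_{i}: F^{\times i} \to Y$ is compatible with the fiber-product decomposition of $F^{\times r}$, and because pushforward along a fiber product over $Y$ sends an external-over-$Y$ product of cycles to the intersection product of their pushforwards (this is the projection-formula/Künneth-type identity $\gamma_{r\ast}(\alpha \times_{Y} \beta) = \gamma_{i\ast}\alpha \cdot \gamma_{j\ast}\beta$ for the projections from $F^{\times i} \times_{Y} F^{\times j}$, valid since $Y = \mathbb{P}^{N}$ is smooth so all classes are operational and the flat base change / projection formula apply), we obtain
\begin{displaymath}
\gamma_{r\ast}B^{(r)}_{\pi} = \prod_{i=1}^{r}\bigl(\gamma_{i\ast}B^{(i)}_{1\ldots i}\bigr)^{s_{i}(\pi)} \in A^{r}(Y),
\end{displaymath}
which is the claim.

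The main obstacle I anticipate is making the multiplicativity of the \emph{equivalence} — not merely the proper contribution of a single distinguished variety, but the full equivalence class summing over all distinguished subvarieties supported in the polydiagonal (cf. Remark \ref{rem:all_dist}) — rigorous under fiber products. One has to check that the normal cone $C_{\Delta^{(r)}_{\pi}}(p_{1}^{-1}X \cap \cdots \cap p_{r}^{-1}X)$ genuinely splits as a $Y$-fiber product of the normal cones attached to the small-diagonal factors, including the identification of \emph{all} its irreducible components (the distinguished varieties) and their geometric multiplicities, and that Segre classes respect this decomposition; the bookkeeping of which distinguished subvarieties of $\Delta^{(r)}_{\pi}$ correspond to which tuples of distinguished subvarieties of the small diagonals $\Delta^{(i)}_{1\ldots i}$ is where care is needed. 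The pushforward step, by contrast, is essentially formal given smoothness of $Y$.
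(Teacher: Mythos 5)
Your proposal is correct and follows essentially the same route as the paper: reduce to the identity $B^{(r)}_{\pi} = \prod_{I} p_{I}^{\ast}B^{(|I|)}_{1\ldots |I|}$ on $F^{\times r}$ via the block decomposition of the polydiagonal, establish it through the multiplicative correspondence between distinguished varieties (components of the normal cone, with multiplicities) of the small diagonals and those of $\Delta^{(r)}_{\pi}$, and then push down to $Y$ using the fiber square $\gamma_{r\ast}(p\times q)^{\ast}\alpha = \delta_{Y}^{\ast}(\gamma_{i}\times\gamma_{j})_{\ast}\alpha$ to turn external products over $Y$ into intersection products on $Y$ (the paper's Lemma \ref{lemma:splitting}). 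The delicate point you flag --- matching \emph{all} distinguished varieties and their geometric multiplicities under the fiber-product splitting of the cone, not just the proper contribution --- is exactly the step the paper also asserts rather than proves in full detail.
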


Before proving the proposition, let us clarify by looking at a simple example.

\begin{example}\label{ex:five_diags}
Say $r = 5$ and we are interested in the contribution to the intersection product $p_{1}^{\ast}\xi \cdot \ldots \cdot p_{5}^{\ast}\xi \in A^{\ast}(F^{\times r})$ coming from the diagonal $\Delta_{12|345}.$ For notational simplicity, let $p$ and $q$ denote the projections $p_{12}$ and $p_{345}$ from $F^{5}$ to $F^{\times 2}$ and $F^{\times 3},$ respectively. Then there are two natural ways of associating a class on $Y$ to the class $B^{(5)}_{12|345}.$ The ``easiest'' is to push forward by $\gamma_{5}.$ The other one consists of pushing forward to $F^{\times 2} \times F^{\times 3}$ through $p \times q,$ then to $Y \times Y$ with $\gamma_{2} \times \gamma_{3},$ and finally pulling back to $Y$ via the the diagonal embedding $\delta_{Y}: Y \hookrightarrow Y \times Y.$ The diagram
\[
\xymatrix
{
F^{\times 5} \ar[r]^>>>>>{p \times q} \ar_{\gamma_{5}}[d] & F^{\times 2} \times F^{\times 3} \ar^{\gamma_{2} \times \gamma_{3}}[d]\\
Y \ar@{^{(}->}_{\delta_{Y}}[r] & Y \times Y
}
\]
is a fiber square, and by \cite[Proposition 1.7]{Ful}, the relation
\begin{equation}
\label{equality}
\gamma_{5\ast} (p \times q)^{\ast} \alpha = \delta_{Y}^{\ast}(\gamma_{2} \times \gamma_{3})_{\ast} \alpha \in A^{\ast}(Y)
\end{equation}
holds $\forall \alpha \in A^{\ast}(F^{\times 2} \times F^{\times 3}).$ There is a degree-preserving morphism of graded rings
\begin{displaymath}
A^{\ast}(F^{\times 2}) \otimes A^{\ast}(F^{\times 3}) \stackrel{\times}{\rightarrow} A^{\ast}(F^{\times 2} \times F^{\times 3}),
\end{displaymath}
called the \textit{exterior product,} and the relation (\ref{equality}) holds for all $\alpha$ in its image. However, the intersection product $\cdot$ on $Y$ is simply the composition
\begin{displaymath}
A^{\ast}(Y) \otimes A^{\ast}(Y) \stackrel{\times}{\rightarrow} A^{\ast}(Y \times Y) \stackrel{\delta_{Y}^{\ast}}{\rightarrow} A^{\ast}(Y).
\end{displaymath}
Let $\alpha$ be the exterior product of $B^{(2)}_{12}$ and $B^{(3)}_{123}.$ Then the right hand side of (\ref{equality}) is $\gamma_{2\ast}B^{(2)}_{12} \cdot \gamma_{3\ast}B^{(3)}_{123}.$ So to conclude that $\gamma_{5\ast}B^{(5)}_{12|345} = \gamma_{2\ast}B^{(2)}_{12} \cdot \gamma_{3\ast}B^{(3)}_{123},$ it suffices to have the equality $(p \times q)^{\ast}\alpha = B^{(5)}_{12|345}.$ But $(p \times q)^{\ast}\alpha = p^{\ast}B^{(2)}_{12} \cdot q^{\ast}B^{(3)}_{123},$ so we must show that this intersection product equals $B^{(5)}_{12|345}.$ \hfill $\blacksquare$
\end{example}

In fact, what is done in the preceding example is general:

\begin{lemma}\label{lemma:splitting}
Let $r \geq 2$ and consider a partition $\pi \in \Pi_{r}.$ For each block of $\pi$ there is a corresponding subset $I$ of $[r].$ Consider the natural projection $p_{I}: F^{\times r} \rightarrow F^{\times |I|}.$ Denote the set of blocks of $\pi$ by $\mathbb{B}(\pi).$ Then the pushdown to $Y$ through $\gamma_{r}$ of the class
\begin{displaymath}
\prod_{I \in \mathbb{B}(\pi)} p_{I}^{\ast}B^{(|I|)}_{1 \ldots |I|} \in A^{\ast}(F^{\times r})
\end{displaymath}
is equal to the intersection product over $I \in \mathbb{B}(\pi)$ of the classes $\gamma_{|I|\ast}B^{(|I|)}_{1 \ldots |I|} \in A^{\ast}(Y).$
\end{lemma}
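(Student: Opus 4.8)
The plan is to reduce the general statement to the two-factor case treated in Example \ref{ex:five_diags} and then iterate. Write $\mathbb{B}(\pi) = \{I_1, \ldots, I_l\}$ for the blocks of $\pi$, with $|I_1| + \cdots + |I_l| = r$. First I would set up the iterated fiber square: grouping the coordinates according to the blocks gives a canonical isomorphism $F^{\times r} \cong F^{\times |I_1|} \times_Y \cdots \times_Y F^{\times |I_l|}$, compatible with the projections $p_{I_j}$ and with $\gamma_r$. The diagram
\[
\xymatrix
{
F^{\times r} \ar[r]^>>>>>{\prod p_{I_j}} \ar_{\gamma_{r}}[d] & F^{\times |I_1|} \times \cdots \times F^{\times |I_l|} \ar^{\gamma_{|I_1|} \times \cdots \times \gamma_{|I_l|}}[d]\\
Y \ar@{^{(}->}_{\delta_{Y}}[r] & Y \times \cdots \times Y
}
\]
is a fiber square, where $\delta_Y: Y \hookrightarrow Y^{\times l}$ is the small diagonal. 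This is the only genuinely geometric input; everything else is formal manipulation of pullbacks and pushforwards.

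Next I would apply the projection/base-change formula \cite[Proposition 1.7]{Ful} to this square: for any $\alpha \in A^{\ast}(F^{\times |I_1|} \times \cdots \times F^{\times |I_l|})$ one has $\gamma_{r\ast}(\prod_j p_{I_j})^{\ast}\alpha = \delta_Y^{\ast}(\gamma_{|I_1|} \times \cdots \times \gamma_{|I_l|})_{\ast}\alpha$. Take $\alpha$ to be the exterior product $B^{(|I_1|)}_{1\ldots|I_1|} \times \cdots \times B^{(|I_l|)}_{1\ldots|I_l|}$. On one hand $(\prod_j p_{I_j})^{\ast}\alpha = \prod_{I \in \mathbb{B}(\pi)} p_I^{\ast} B^{(|I|)}_{1\ldots|I|}$, which is exactly the class in the statement, because pullback commutes with the exterior product under the identification of $F^{\times r}$ with the iterated fiber product. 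On the other hand, compatibility of pushforward with exterior products (Künneth, cf. \cite[Proposition 1.10]{Ful}) gives $(\gamma_{|I_1|} \times \cdots \times \gamma_{|I_l|})_{\ast}\alpha = (\gamma_{|I_1|\ast}B^{(|I_1|)}_{1\ldots|I_1|}) \times \cdots \times (\gamma_{|I_l|\ast}B^{(|I_l|)}_{1\ldots|I_l|})$, and pulling this exterior product back along the small diagonal $\delta_Y$ is, by definition of the ring structure on $A^{\ast}(Y)$, precisely the intersection product $\prod_{I \in \mathbb{B}(\pi)} \gamma_{|I|\ast}B^{(|I|)}_{1\ldots|I|}$. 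Combining the two computations yields the asserted identity.

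The one point requiring care — the step I expect to be the main obstacle — is the identification $(\prod_j p_{I_j})^{\ast}\alpha = \prod_I p_I^{\ast}B^{(|I|)}_{1\ldots|I|}$, i.e. that the chosen polydiagonal class on $F^{\times r}$ really is the pullback of the exterior product, and not merely a class with the same pushdown. This is what is being asserted at the end of Example \ref{ex:five_diags} and is settled there for two blocks; for the general case one checks that under the block-grouping isomorphism $F^{\times r} \cong \prod_j^{Y} F^{\times |I_j|}$ the projection $p_{I_j}$ factors as the $j$-th coordinate projection followed by nothing else, so that pullback of an exterior product distributes over the $p_{I_j}^{\ast}$ exactly as claimed; this is a purely formal consequence of the functoriality of pullback and the fact that exterior product followed by restriction to a fiber product equals the fiber-product intersection of the pullbacks. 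Strictly speaking one could also avoid the iterated square and instead induct on $l$, splitting off one block at a time and invoking the two-factor fiber square of Example \ref{ex:five_diags} repeatedly; I would present whichever is cleaner, but the inductive version has the advantage of reusing verbatim the argument already spelled out.
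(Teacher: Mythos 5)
Your proposal is correct and is exactly the argument the paper intends: the paper's own ``proof'' of this lemma is a one-line remark that generalizing Example \ref{ex:five_diags} is purely formal and is therefore omitted, and your write-up (the block-grouping fiber square over the small diagonal $\delta_Y: Y \hookrightarrow Y^{\times l}$, base change via \cite[Proposition 1.7]{Ful}, compatibility of pushforward with exterior products, and $\delta_Y^{\ast}$ of an exterior product being the iterated intersection product) is precisely that generalization. The only caveat is that the final identification $\prod_I p_I^{\ast}B^{(|I|)}_{1\ldots|I|} = B^{(r)}_{\pi}$ you flag as the delicate point is not part of this lemma at all --- it is the content of Proposition \ref{prop:splitting}, proved separately afterwards --- so you need not (and should not) fold it into this proof.
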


\begin{proof}
The matter of generalizing the result from the previous example is purely formal, and therefore left out.
\end{proof}

We now prove Proposition \ref{prop:splitting}:

\begin{proof}
By Lemma \ref{lemma:splitting}, it suffices to show that after push-forward to $F^{\times r},$
\begin{equation}
B^{(r)}_{\pi} = \prod_{I \in \mathbb{B}(\pi)} p_{I}^{\ast}B^{(|I|)}_{1\ldots |I|}.
\end{equation}
For each $1 \leq i \leq r,$ let $p_{i}$ denote the $i$th projection from $F^{\times r}$ to $F$ and $\delta_{r}$ the diagonal embedding of $F^{\times r}$ in $F^{\times r} \times \ldots \times F^{\times r}.$ Let $N$ be the dimension of $Y.$ We are interested in the intersection diagram
\[
\xymatrix
{
\bigcap X_{i} \cong X^{\times r} \ar@{^{(}->}[r] \ar@{^{(}->}[d] & F^{\times r} \ar@{^{(}->}^{\delta_{r}}[d]\\
X_{1} \times \ldots \times X_{r} \ar@{^{(}->}[r] & F^{\times r} \times \ldots \times F^{\times r}
}
\]
where $X_{i} := p_{i}^{-1}(X).$ Denote by $\mathscr{N}^{(r)}$ the pullback of the normal bundle of $X_{1} \times \ldots \times X_{r}$ in $F^{\times r} \times \ldots \times F^{\times r}.$ The latter embedding is closed regular of codimension $3r,$ so $\mathscr{N}^{(r)}$ is a bundle of rank $3r$ on $X^{\times r}.$ Let $\zeta_{r}$ be the projection $\mathscr{N}^{(r)} \rightarrow X^{\times r}.$ The cone $C^{(r)} := C_{X^{\times r}}F^{\times r},$ which has pure dimension $2r + N,$ embeds as a closed subcone of $\mathscr{N}^{(r)}$ over $X^{\times r},$ and gives a cycle $[C^{(r)}]$ of dimension $2r + N$ on this bundle.

Let the irreducible components of $C^{(r)}$ be $C^{(r)}_{j}, 1 \leq j \leq t_{r},$ with geometric multiplicities $m^{(r)}_{j}$ and supports $Z^{(r)}_{j},$ which are irreducible subschemes of $X^{\times r},$ not necessarily all distinct. Let $z^{(r)}_{j}: Z^{(r)}_{j} \rightarrow N^{(r)}_{j}$ be the zero section of the restriction of $\mathscr{N}^{(r)}$ to $Z^{(r)}_{j}.$ 

Now, $B^{(r)}_{\pi}$ is defined as the sum of the contributions to $X_1 \cdot \ldots \cdot X_r$ coming from all distinguished varieties $Z^{(r)}_{j}$ (defined above) supported on $\Delta^{(r)}_{\pi} = \bigcap_{I \in \mathbb{B}(\pi)} \Delta^{(r)}_{I}.$ Hence, with $p_I: F^{\times r} \rightarrow F^{\times |I|}$ the natural projection for each $I \in \mathbb{B}(\pi),$ there is a multiplicative correspondence between tuples of components of $C^{(|I|)}, I \in \mathbb{B}(\pi),$ each with support contained in $\Delta^{(|I|)}_{1\ldots |I|},$ and components of $C^{(r)}$ with support contained in $\Delta^{(r)}_{\pi},$ such that the geometric multiplicity of $C^{(r)}_{j}$ equals the product of the geometric multiplicities of the corresponding components of the $C^{(|I|)}.$ Hence, letting $\delta$ denote the diagonal embedding of $F^{\times r}$ into $F^{\times r} \times \ldots \times F^{\times r}$ ($|I|$ factors) and letting $\times$ denote the exterior product,
\begin{eqnarray*}
\prod_{I \in \mathbb{B}(\pi)} p_{I}^{\ast}B^{(|I|)}_{1\ldots |I|} & = & \delta^{\ast}\left(\bigtimes_{I \in \mathbb{B}(\pi)} p_{I}^{\ast}B^{(|I|)}_{1\ldots |I|}\right) \\
& = & \delta^{\ast} \left( \bigtimes_{I \in \mathbb{B}(\pi)} p_{I}^{\ast} \left(\sum_{Z^{(|I|)}_{j} \subseteq \Delta^{(|I|)}_{1\ldots |I|}} m^{(|I|)}_{j}z^{(|I|)\ast}_{j}[C^{(|I|)}_{j}] \right)\right) \\
& = & \delta^{\ast} \left( \sum_{\substack{I \in \mathbb{B}(\pi) \\ Z^{(|I|)}_{j(I)} \subseteq \Delta^{(|I|)}_{1\ldots |I|}}} \prod_{I \in \mathbb{B}(\pi)} m^{(|I|)}_{j(I)} \bigtimes_{I \in \mathbb{B}(\pi)} p_{I}^{\ast} z^{(|I|)\ast}_{j(I)}[C^{(|I|)}_{j(I)}] \right) \\
& = & \sum_{\substack{I \in \mathbb{B}(\pi) \\ Z^{(|I|)}_{j(I)} \subseteq \Delta^{(|I|)}_{1\ldots |I|}}} \prod_{I \in \mathbb{B}(\pi)} m^{(|I|)}_{j(I)} \delta^{\ast} \left( \bigtimes_{I \in \mathbb{B}(\pi)} p_{I}^{\ast} z^{(|I|)\ast}_{j(I)}[C^{(|I|)}_{j(I)}] \right).
\end{eqnarray*}
Using the definition of the intersection product \cite[Section 6.1]{Ful} and the correspondence between the $C^{(r)}_{j}$ whose support is contained in $\Delta^{(r)}_{\pi},$ and tuples of components of the $C^{(|I|)}$ for $I \in \mathbb{B}(\pi),$ with $\prod_{I \in \mathbb{B}(\pi)} m^{(|I|)}_{j(I)} = m^{(r)}_{j},$ we now get
\begin{eqnarray*}
\prod_{I \in \mathbb{B}(\pi)} p_{I}^{\ast}B^{(|I|)}_{1\ldots |I|} & = & \sum_{Z^{(r)}_{j} \subseteq \Delta^{(r)}_{\pi}} m^{(r)}_{j} z^{(r)\ast}_{j}[C^{(r)}_{j}] \\
& = & B^{(r)}_{\pi},
\end{eqnarray*}
\end{proof}

We may now proceed to prove the main theorem of this section, Theorem \ref{thm:shape}, concerning the shape of the node polynomials:\\

\begin{proof} We assume $r$ is such that $\mathscr{L}$ is $r$-very ample. Hence, by Proposition 2.1 in \cite{KST}, a general $r$-dimensional linear system $\mathbb{P}^{r} \subset |\mathscr{L}|$ contains a finite number of $r$-nodal curves, appearing with multiplicity 1, and all other curves are reduced with geometric genus strictly larger than $g-r,$ where $2g-2 = \mathscr{L} \cdot (\mathscr{L}+\mathscr{K}_{S}).$ These curves are excluded from the counting by subtracting from $p_{1}^{\ast}\xi \cdot \ldots p_{r}^{\ast}\xi$ the equivalence of the polydiagonals. Indeed, this operation takes care both of the excess contribution as well as the contribution from embedded, distinguished varieties. Since curves in $|\mathscr{L}|$ with higher geometric genus must have strictly fewer than $r$ singular points, the corresponding distinguished varieties must be supported on the diagonal subspace $\bigcup_{\pi \in \Pi_{r}^{\circ}} \Delta^{(r)}_{\pi}$ of $X \times_{Y} \ldots \times_{Y} X.$ So the cycle class $\gamma_{r\ast} I_{r} \in A^{r}(Y)$ represents a cycle which is reduced and enumerates precisely the finite number of $r$-nodal curves in the generic subsystem $\mathbb{P}^{r},$ with an ordering of the $r$ nodes. Since there are $r!$ ways to order the $r$ nodes, the class $\gamma_{r\ast} I_{r}/r!$ enumerates $r$-nodal curves, i.e.,
\begin{equation}
N_{r}(S, \mathscr{L})H^{r} = \frac{1}{r!}\gamma_{r\ast} I_{r}.
\end{equation}

Since we defined $a_{1}(S,\mathscr{L})$ as $\int_{Y}\gamma_{1\ast}\xi,$ the pushdown to $Y$ of $\prod_{i=1}^{r}p^{\ast}_{i}\xi$ becomes $a_{1}(S,\mathscr{L})^{r}H^{r}.$ Also, Proposition \ref{prop:splitting} implies that $n^{(r)}_{\pi}B^{(r)}_{\pi}$ pushes down to $\prod_{i=1}^{r} a_{i}(S,\mathscr{L})^{s_{i}(\pi)}H^{r},$ with $s_{i}(\pi)$ denoting the number of blocks of size $i$ in the partition $\pi \in \Pi_{r}.$ For any $r$-tuple of non-negative integers $j_{i}$ such that $j_{1} + 2j_{2} + \ldots + rj_{r} = r,$ let $\widetilde{e}_{j_{1}\ldots j_{r}}$ denote the number of polydiagonals with $j_{i}$ blocks of size $i.$ Then it is clear that
\begin{equation}
N_{r}(S,\mathscr{L}) = \frac{1}{r!} \sum_{j_{1} + \ldots + rj_{r} = r} \widetilde{e}_{j_{1}\ldots j_{r}}\prod_{l=1}^{r} a_{l}(S,\mathscr{L})^{j_{l}}.
\end{equation}
Set $L_{r}(a_{1}(S,\mathscr{L}), \ldots, a_{r}(S,\mathscr{L}))$ to be the sum $\sum_{j_{1} + \ldots + rj_{r} = r} \widetilde{e}_{j_{1}\ldots j_{r}}\prod_{l=1}^{r} a_{l}(S,\mathscr{L})^{j_{l}}.$ If we regroup the polydiagonals by their number of blocks, $i,$ and note that polydiagonals with $i$ blocks can have no blocks of size $> r-i+1$ (indeed, each block must have at least one element, so we would get a number of elements $> (i-1) \cdot 1 + r-i+1 = r,$ which is impossible), then
\begin{displaymath}
L_{r}(a_{1}(S,\mathscr{L}), \ldots, a_{r}(S,\mathscr{L}))  = \sum_{i=1}^{r} \sum_{J_{r,i}} e_{j_{1}\ldots j_{r-i+1}} \prod_{l=1}^{r-i+1} a_{l}(S,\mathscr{L})^{j_{l}}.
\end{displaymath}
Here, $J_{r,i}$ is the set of tuples $(j_{1}, \ldots, j_{r-i+1})$ such that we have $\sum lj_{l} = r$ and $\sum j_{l} = i$ (so $\sum j_{l}$ is the number of blocks and $\sum lj_{l}$ is the number of elements for the corresponding partition). The coefficient $e_{j_{1} \ldots j_{r-i+1}}$ is the number of polydiagonals with $i$ blocks, of which $j_{l}$ have size $l.$

But, according to Definition \ref{def:part_bell}, this is exactly how the coefficients of the partial Bell polynomials are defined, so $L_{r}(a_{1}(S,\mathscr{L}), \ldots, a_{r}(S,\mathscr{L}))$ is in fact equal to the $r$th complete Bell polynomial $P_{r}$ in the $a_{i}(S,\mathscr{L}), 1 \leq i \leq r,$ which is what we wanted to prove.
\end{proof}

\section{On the equivalence of the polydiagonals}\label{sec:equivalences}
The previous section established the shape of the node polynomials $Z_r$, but is computationally incomplete, since apart from providing an intersection theoretical definition of the $a_i,$ it does not present them as linear combinations (with coefficients which are integers) of the Chern numbers of $(S,\mathscr{L}).$ The distinguished varieties supported on the small diagonal $\Delta^{(r)}_{12\ldots}$ of $X^{\times r}$ include the diagonal itself, in addition to embedded components.

Our approach here is to first consider the proper contribution of the polydiagonals, the objective being to compute the excess contribution from their union, $\Delta(r),$ to the intersection product $X_1 \cdot \ldots \cdot X_r.$ In the next section, we treat the residual contribution coming from embedded components.

We recall the definition of the Segre class of a closed subscheme:
\begin{definition}\label{def:segre}
Let $X$ be a closed subscheme of a scheme $Y.$ Let $C$ denote the normal cone of $X$ in $Y,$ and consider the projective completion
\begin{equation}
P(C\oplus \mathbf{1}) := \textnormal{Proj}(S^{\bullet}[z]).
\end{equation}
Denote by $q$ the projection from $P(C\oplus \mathbf{1})$ to $X,$ and by $\mathscr{O}(1)$ the canonical line bundle on $P(C \oplus \mathbf{1}).$ The Segre class of $X$ in $Y$ is the following class:
\begin{equation}
s(C) := q_{\ast}\left(\sum_{i \geq 0} c_{1}(\mathscr{O}(1))^{i} \cap [P(C \oplus \mathbf{1})]\right) \in A_{\ast}(X).
\end{equation} \hfill $\blacksquare$
\end{definition}

By \cite[Proposition 9.1.1]{Ful}, the equivalence of $\Delta(r)$ for the intersection product $X_1 \cdot \ldots \cdot X_r$ is 
\begin{equation}\label{eqn:diagonals_excess}
(X_{1} \cdot \ldots \cdot X_{r})^{\Delta(r)} = \left\{ \prod_{i=1}^{r}c(N_{X_{i}}F^{\times r}|\Delta(r)) \cap s(\Delta(r),F^{\times r})\right\}_{N-r}
\end{equation}
The structure of the subscheme $\Delta(r),$ however, makes any direct attempt to control this difficult. Indeed, $\Delta(r)$ has several irreducible components, and while one can compute the contribution of each $\Delta^{(r)}_{\pi}$ separately (see below), this does not directly yield the contribution of their union. To clarify this, we proceed in several steps:

\begin{definition}\label{def:equivalence}
For each $\pi \in \Pi_{r}^{\circ},$ denote by $\mathscr{E}^{(r)}_{\pi}$ the equivalence of $\Delta^{(r)}_{\pi}$ for the intersection product $X_{1} \cdot \ldots \cdot X_{r},$ that is
\begin{equation}\label{eqn:equivalence}
\mathscr{E}^{(r)}_{\pi} := \left(X_{1} \cdot \ldots \cdot X_{r}\right)^{\Delta^{(r)}_{\pi}} \in A_{N-r}(\Delta^{(r)}_{\pi}).
\end{equation}
Also, let $Q^{(r)}_{\pi}$ denote the integer $$\int_{Y} f^{(r)}_{\pi\ast} \left(X_{1} \cdot \ldots \cdot X_{r} \right)^{\Delta^{(r)}_{\pi}} \in \mathbb{Z},$$ where $f^{(r)}_{\pi}: \Delta^{(r)}_{\pi} \rightarrow Y$ is the composition of the embedding of $\Delta^{(r)}_{\pi}$ into $F^{\times r}$ and the projection $\gamma_{r} F^{\times r} \rightarrow Y.$

For each $r \geq 1,$ let $\mathscr{E}_{r}$ denote $\mathscr{E}^{(r)}_{12\ldots r},$ and set $Q_{r} := Q^{(r)}_{12\ldots r}.$ \hfill $\blacksquare$
\end{definition}

Below, we will compute the numbers $Q_{r}.$ For now, we note that they are --- in large part --- all we need to understand the equivalence of $\Delta(r):$

\begin{theorem}\label{thm:split_equiv}
Let $\pi \in \Pi_{r}^{\circ}.$ For each $i \in [r],$ let $s_{i}(\pi)$ denote the number of blocks of length $i$ in the partition $\pi.$ Then $$Q^{(r)}_{\pi} = \prod_{i=1}^{r} Q_{i}^{s_{i}(\pi)}.$$
\end{theorem}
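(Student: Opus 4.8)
The plan is to reduce Theorem \ref{thm:split_equiv} to the multiplicativity already proved in Proposition \ref{prop:splitting}, by showing that the equivalence $\mathscr{E}^{(r)}_{\pi}$ of a polydiagonal $\Delta^{(r)}_{\pi}$ is precisely the part of $B^{(r)}_{\pi}$ coming from \emph{proper} contributions, and then observing that the proper part is multiplicative under the block decomposition in exactly the same way as the full class $B^{(r)}_{\pi}$. Concretely: $\Delta^{(r)}_{\pi} \cong \prod_{I \in \mathbb{B}(\pi)} \Delta^{(|I|)}_{1\ldots|I|}$ (fiber product over $Y$), so by Proposition \ref{prop:splitting} we have $\gamma_{r\ast}B^{(r)}_{\pi} = \prod_{i=1}^{r}(\gamma_{i\ast}B^{(i)}_{1\ldots i})^{s_i(\pi)}$; the content of the present theorem is the analogous statement with the classes $B$ replaced by the equivalences $\mathscr{E}$ (equivalently, the integers $Q$).

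First I would recall, following \cite[Proposition 9.1.1]{Ful}, that the equivalence $\mathscr{E}^{(r)}_{\pi}$ of the \emph{connected component} $\Delta^{(r)}_{\pi}$ of $X_1 \cap \ldots \cap X_r$ (when we restrict attention to a neighbourhood isolating this component) is given by formula (\ref{eqn:diagonals_excess}) with $\Delta(r)$ replaced by $\Delta^{(r)}_{\pi}$: namely $\mathscr{E}^{(r)}_{\pi} = \{\prod_i c(N_{X_i}F^{\times r}|_{\Delta^{(r)}_{\pi}}) \cap s(\Delta^{(r)}_{\pi}, F^{\times r})\}_{N-r}$. Then I would split this into the intersection diagram used in the proof of Proposition \ref{prop:splitting}, with $X_i = p_i^{-1}(X)$, and note that for each block $I \in \mathbb{B}(\pi)$ the normal data factors: the Segre class $s(\Delta^{(r)}_{\pi}, F^{\times r})$ is the pullback under $\delta$ of the exterior product $\bigtimes_{I} s(\Delta^{(|I|)}_{1\ldots|I|}, F^{\times|I|})$, and the product of Chern classes $\prod_{i=1}^{r} c(N_{X_i}F^{\times r})$ likewise restricts, block by block, to $\bigtimes_{I} \prod_{i \in I} c(N_{X_i}F^{\times|I|})$. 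The only subtlety is the truncation $\{\cdot\}_{N-r}$: I would argue that, because each $\gamma_{|I|\ast}\mathscr{E}^{(|I|)}_{1\ldots|I|}$ lands in $A^{|I|}(Y)$ with $|I|$ the ``expected codimension'' for that block and $\sum_{I}|I| = r$, there is a unique bidegree contributing to the pushdown, so the truncation commutes with the exterior product and the push-forwards; this is the same bookkeeping already carried out (for the full classes) in the proof of Proposition \ref{prop:splitting} and in Example \ref{ex:five_diags}.

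The cleanest route, which I would actually take to avoid re-deriving Segre-class bookkeeping, is to \emph{define} $\mathscr{E}^{(r)}_{\pi}$ intrinsically as the proper contribution and invoke the multiplicativity of \emph{geometric multiplicities} of cone components already established inside the proof of Proposition \ref{prop:splitting}: there we showed a multiplicative correspondence between tuples of components of the normal cones $C^{(|I|)}$ supported on the small diagonals and components of $C^{(r)}$ supported on $\Delta^{(r)}_{\pi}$, with multiplicities multiplying. Restricting that correspondence to the single component whose support is the \emph{whole} polydiagonal $\Delta^{(r)}_{\pi}$ (respectively the whole small diagonal $\Delta^{(|I|)}_{1\ldots|I|}$ for each block) isolates exactly the proper contributions, and since the geometric multiplicity of the distinguished component $\Delta^{(r)}_{\pi}$ equals the product of the multiplicities of the $\Delta^{(|I|)}_{1\ldots|I|}$, and the zero-section pullbacks $z^{\ast}[C_j]$ multiply under $\delta^{\ast}(\cdot \times \cdot)$ exactly as in the displayed computation in that proof, we obtain $\gamma_{r\ast}\mathscr{E}^{(r)}_{\pi} = \prod_{I \in \mathbb{B}(\pi)} \gamma_{|I|\ast}\mathscr{E}^{(|I|)}_{1\ldots|I|}$. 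Taking degrees via $\int_Y$ and collecting blocks by size gives $Q^{(r)}_{\pi} = \prod_{i=1}^{r} Q_i^{s_i(\pi)}$, as claimed.

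The main obstacle I anticipate is \emph{not} the multiplicativity itself — that is essentially inherited verbatim from Proposition \ref{prop:splitting} — but verifying carefully that isolating ``the proper contribution of $\Delta^{(r)}_{\pi}$'' is compatible with the block-decomposition: one must be sure that a cone component of $C^{(r)}$ whose support is exactly $\Delta^{(r)}_{\pi}$ corresponds, under the multiplicative correspondence, precisely to the tuple consisting, for each block $I$, of a component of $C^{(|I|)}$ whose support is exactly the small diagonal $\Delta^{(|I|)}_{1\ldots|I|}$ (and not some smaller subvariety thereof). This is where one uses that $\Delta^{(r)}_{\pi} = \prod_I \Delta^{(|I|)}_{1\ldots|I|}$ is a product and that the supports of cone components are determined componentwise; I would spell this out but expect it to be a short verification once the correspondence from Proposition \ref{prop:splitting} is in hand.
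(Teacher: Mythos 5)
Your primary route (the second paragraph) is exactly the paper's proof: apply the Segre-class expression for $(X_1\cdot\ldots\cdot X_r)^{\Delta^{(r)}_{\pi}}$, factor the normal-bundle Chern classes and the Segre class blockwise using $\Delta^{(r)}_{\pi}\cong\prod_{I\in\mathbb{B}(\pi)}\Delta^{(|I|)}_{1\ldots|I|}$ to get $\mathscr{E}^{(r)}_{\pi}=\prod_{I}p_I^{\ast}\mathscr{E}_{|I|}$, and then push down to $Y$ exactly as in Proposition \ref{prop:splitting} and Lemma \ref{lemma:splitting}. The alternative ``cone-component'' route you sketch in the third paragraph is not needed (and the paper does not use it here), since $Q^{(r)}_{\pi}$ is defined via the Segre class of $\Delta^{(r)}_{\pi}$ rather than by isolating proper cone components, so the direct factorization already closes the argument.
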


\begin{proof}
On $F^{\times r},$ let $N^{(r)}_{i}$ denote $p_{i}^{(r)\ast}\mathscr{P}_{F/Y}(\mathscr{L} \boxtimes \mathscr{O}_{Y}(1)),$ where $p^{(r)}_{i}: F^{\times r} \rightarrow F$ are the projections. Also, let $\mathbb{B}(\pi)$ denote the set of blocks of the partition $\pi,$ and for $I \in \mathbb{B}(\pi),$ let $|I|$ denote the number of elements in $I$ and $p_I: F^{\times r} \rightarrow F^{\times |I|}$ the projection $\prod_{i \in I}p^{(r)}_{i}.$ Then we have:
\footnotesize
\begin{eqnarray*}
\mathscr{E}^{(r)}_{\pi} & = & (X_1 \cdot \ldots \cdot X_r)^{\Delta^{(r)}_{\pi}} \\
& = & \left\{c(N^{(r)}_1 \oplus \ldots \oplus N^{(r)}_r|\Delta^{(r)}_{\pi}) \cap s(\Delta^{(r)}_{\pi},F^{\times r})\right\}_{N-r} \\
& = & \left\{c\left(\bigoplus_{I \in \mathbb{B}(\pi)} p_{I}^{\ast}(N^{(|I|)}_{1} \oplus \ldots \oplus N^{(|I|)}_{|I|})|\Delta^{(r)}_{\pi} \right) \cap s\left(\prod_{I \in \mathbb{B}(\pi)} \Delta^{(|I|)}_{1\ldots |I|}, \prod_{I \in \mathbb{B}(\pi)} F^{\times |I|} \right) \right\}_{N-r} \\
& = & \prod_{I \in \mathbb{B}(\pi)} p_{I}^{\ast}\left\{c(N^{(|I|)}_{1} \oplus \ldots \oplus N^{(|I|)}_{|I|}|\Delta^{(|I|)}_{1\ldots |I|}) \cap s(\Delta^{(|I|)}_{1\ldots |I|},F^{\times |I|})\right\}_{N-|I|} \\
& = & \prod_{I \in \mathbb{B}(\pi)} p_{I}^{\ast}\mathscr{E}_{|I|},
\end{eqnarray*}
\normalsize
since $\Delta^{(r)}_{\pi} \cong \prod_{I \in \mathbb{B}(\pi)} \Delta^{(|I|)}_{1\ldots |I|}$ (fibered product over $Y$) and by definition of the intersection product as $A^{\ast}(F^{\times r}) \otimes A^{\ast}(F^{\times r}) \stackrel{\times}{\rightarrow} A^{\ast}(F^{\times r} \times F^{\times r}) \stackrel{\delta^{\ast}}{\rightarrow} A^{\ast}(F^{\times r}).$ But by a reasoning similar to Proposition \ref{prop:splitting}, the pushdown of $\prod_{I \in \mathbb{B}(\pi)} p_{I}^{\ast}\mathscr{E}_{|I|}$ to $Y$ is equal to $\prod_{I \in \mathbb{B}(\pi)} f_{\ast} \mathscr{E}_{|I|},$ hence
\begin{eqnarray*}
Q^{(r)}_{\pi} & = & \int_{Y} \prod_{I \in \mathbb{B}(\pi)} f_{\ast} \mathscr{E}_{|I|} \\
& = & \prod_{I \in \mathbb{B}(\pi)} Q_{|I|} = \prod_{i=1}^{r} Q_{i}^{s_{i}(\pi)},
\end{eqnarray*}
as claimed.
\end{proof}

At this point, the naive way to proceed would be to use the principle of inclusion-exclusion to express $(X_1 \cdot \ldots \cdot X_r)^{\Delta(r)}$ as a linear combination of the $\mathscr{E}^{(r)}_{\pi}.$ The following example illustrates that this is impossible:

\begin{example}\label{ex:case_three}
Let $r :=3.$ There are four diagonals to consider, the small diagonal $\Delta^{(3)}_{123}$ and the three large diagonals; $\Delta^{(3)}_{12|3}, \Delta^{(3)}_{13|2}$ and $\Delta^{(3)}_{23|1}.$ Each of those contains the small diagonal. Thus, the principle of inclusion-exclusion predicts the following equality (where the terms on the right hand side are pushed forward to $\Delta(3)$):
\begin{equation}\label{eqn:inc_exc_fail}
(X_{1} \cdot X_{2} \cdot X_{3})^{\Delta(3)} = \sum_{i \neq j}(X_{1} \cdot X_{2} \cdot X_{3})^{\Delta^{(3)}_{ij}} - 2(X_{1} \cdot X_{2} \cdot X_{3})^{\Delta^{(3)}_{123}}.
\end{equation}
When pushing this down to $Y$ and taking the degree, the right hand side becomes $3Q_1Q_2 - 2Q_3,$ because of Theorem \ref{thm:split_equiv}. But this is not the correct ``total'' equivalence, simply because Segre classes do not satisfy the principle of inclusion-exclusion. This failure is easily illustrated by considering the following example: Let $X$ be the subscheme of $\mathbb{P}^{2}$ defined as the union of two lines; since it is a divisor of degree 2, its Segre class in $\mathbb{P}^{2}$ is $2l-4l^2,$ with $l$ the class of a hyperplane. However, inclusion-exclusion predicts $(l-l^2)+(l-l^2)-l^2 = 2l-3l^2,$ which is wrong.

For more on this problem and how to understand it, see \cite{Alu1}. \hfill $\blacksquare$
\end{example}

For us, this means that we need to construct appropriate correction terms, $\mathscr{C}^{(r)}_{\pi},$ such that $(X_1 \cdot \ldots \cdot X_{r})^{\Delta(r)}$ can be written as a linear combination, not of the $\mathscr{E}^{(r)}_{\pi},$ but of the corrected terms $\mathscr{E}^{(r)}_{\pi} + \mathscr{C}^{(r)}_{\pi}.$ For this, we make use of the classical theory of multiple point formulas, following essentially Kleiman's \cite{Klei1}.

Let $f: X \rightarrow Y$ be the composition of the embedding $\iota$ of $X$ in $F = S \times Y$ and the projection $\gamma_{1}$ to $Y = |\mathscr{L}|.$ This is an lci of codimension 1. Its strict double points are points in $X$ corresponding to binodal curves with one marked node, while the double point locus also includes cuspidal curves. The double point formula \cite[Theorem 5.6]{Klei1} states that
\begin{eqnarray*}
m_{2} & = & f^{\ast}f_{\ast}[X] - c_{1} \cap [X] \\
& = & p_{1\ast} (p_{1}^{\ast}[X] \cdot p_{2}^{\ast}[X]) - \left\{\frac{f^{\ast}(c(T_{Y}))}{c(T_{X})}\right\}^{1} \cap [X] \\
& = & p_{1\ast} (p_{1}^{\ast}[X] \cdot p_{2}^{\ast}[X]) - \left\{c(N_{X}F)\iota^{\ast}\nu^{\ast}c(T_{S})^{-1} \cap [X] \right\}_{N-2}. \\
\end{eqnarray*}
Indeed, to show that $f^{\ast}f_{\ast}[X] = p_{1\ast}(p_{1}^{\ast}[X] \cdot p_{2}^{\ast}[X]),$ consider the fibre diagram
\[
\xymatrix
{
X \times_{Y} X \ar[d]_{p_1} \ar[r] & X \times X \ar[d]^{1 \times f} \\
X \ar[r]_{\gamma_f} \ar[d]_{f} & X \times Y \ar[d]^{f \times 1} \\
Y \ar[r]_{\delta} & Y \times Y
}
\]
where $\gamma_f: X \hookrightarrow X \times Y$ is the graph embedding of $X,$ and $\delta: Y \hookrightarrow Y \times Y$ is the diagonal embedding of $Y.$ Then
\begin{eqnarray*}
f^{\ast}f_{\ast}[X] & = & \gamma_{f}^{\ast}([X] \times f_{\ast}[X]) \\
& = & p_{1\ast}(\gamma_{f}^{!}[X \times X]) \\
& = & p_{1\ast}(\delta^{!}[X \times X]) = p_{1\ast}(p_{1}^{\ast}[X] \cdot p_{2}^{\ast}[X]), 
\end{eqnarray*}
where $\gamma_{f}^{!}: A^{\ast}(X \times X) \rightarrow A^{\ast}(X \times_{Y} X)$ and $\delta^{!}: A^{\ast}(X \times X) \rightarrow A^{\ast}(X \times_{Y} X)$ are the refined Gysin pullback homomorphisms induced by $\gamma_f$ and $\delta$ (see \cite[Sec. 6.2]{Ful} for a formal definition). On the other hand, to show that
\begin{displaymath}
\left\{\frac{f^{\ast}(c(T_{Y}))}{c(T_{X})}\right\}^{1} \cap [X] = \left\{c(N_{X}F)\iota^{\ast}\nu^{\ast}c(T_{S})^{-1} \cap [X] \right\}_{N-2},
\end{displaymath}
we simply use the standard exact sequence of the regular embedding $\iota,$
\begin{equation}\label{eqn:exact_seq}
0 \rightarrow T_{X} \rightarrow \iota^{\ast}T_{F} \rightarrow N_{X}F \rightarrow 0,
\end{equation}
and the fact that $c(T_{F}) = \nu^{\ast}c(T_{S}) \oplus \gamma_{1}^{\ast} c(T_{Y})$ when $\nu$ is the projection $F = S \times Y \rightarrow S.$ We conclude that $\int_{Y}f_{\ast}m_{2} = Q_{1}^{2} - Q_{2},$ where the $Q_{i}$ are the terms introduced in Definition \ref{def:equivalence}. Indeed,
\begin{eqnarray*}
\mathscr{E}_{2} & = & \left(X_{1} \cdot X_{2}\right)^{\Delta^{(2)}_{12}} \\
& = & \left\{p_1^{\ast}c(N_{X}F|\Delta^{(2)}_{12})p_2^{\ast}c(N_{X}F|\Delta^{(2)}_{12}) \cap s(\Delta^{(2)}_{12},F^{\times 2}) \right\}_{N-2} \\
& = & \left\{c(N_{X}F)^{2} \cap c(N_{X}F)^{-1} \iota^{\ast}\nu^{\ast}c(T_{S})^{-1} \cap [X] \right\}_{N-2} \\
& = & \left\{c(N_{X}F)\iota^{\ast}\nu^{\ast}c(T_{S})^{-1} \cap [X] \right\}_{N-2} \\
& = & c_{1} \cap [X].
\end{eqnarray*}

Now, the triple point formula \cite[Theorem 5.9]{Klei1} can be manipulated as follows (all maps have codimension 1):
\begin{eqnarray*}
m_{3} & = & f^{\ast}f_{\ast}m_2 - 2c_1 \cap m_2 + 2c_2 \cap m_1 \\
& = & f^{\ast}f_{\ast}(f^{\ast}f_{\ast}[X] - c_{1} \cap [X]) - 2c_{1} \cap (f^{\ast}f_{\ast}[X] - c_{1} \cap [X]) + 2c_{2} \cap [X] \\
& = & p_{1\ast} (p_{1}^{\ast}[X] \cdot p_{2}^{\ast}[X] \cdot p_{3}^{\ast}[X]) - 3c_{1} f^{\ast}f_{\ast}[X] + 2c_{1}^{2} \cap [X] + 2c_{2} \cap [X].
\end{eqnarray*}

We now rewrite the term $c_1^{2} \cap [X]:$ Let $c(f)(t) := 1 + \sum c_i t^i$ denote the polynomial $c_{t}(N_{X}F)\iota^{\ast}\nu^{\ast}c_{t}(T_{S})^{-1}$ in $t.$ Then $c_{1}$ is the coefficient of $t$ in $c(f)(t).$ On the other hand, the equivalence $\mathscr{E}_{3}$ is defined as
\begin{equation}
\left\{c(N_{X}F)^2\iota^{\ast}\nu^{\ast}c(T_{S})^{-2} \cap [X] \right\}_{N-3},
\end{equation}
which corresponds to capping the coefficient of $t^2$ in $c(f)(t)^2,$ namely $c_1^2 + 2c_2,$ with $[X].$ Thus,
\begin{equation}
2c_{1}^{2} \cap [X] = 2\mathscr{E}_{3} - 4c_{2} \cap [X].
\end{equation}

\begin{definition}
We denote by $C_3$ the integer
\begin{displaymath}
C_{3} := -\int_{Y} f_{\ast} (c_{2} \cap [X]) = -\int_{Y} f_{\ast} \left\{c(N_{X}F)\iota^{\ast}\nu^{\ast}c(T_{S})^{-1} \cap [X] \right\}_{N-3}.
\end{displaymath}
\hfill $\blacksquare$
\end{definition}

Recall that the third complete Bell polynomial is defined as $P_{3}(x_1,x_2,x_3) := x_1^3 + 3x_1x_2 + x_3.$ We therefore see that
\begin{equation}
\int_{Y}f_{\ast}m_{3} = Q_{1}^{3} - 3Q_{1}Q_{2} + 2Q_{3} + 2C_{3} = P_{3}(Q_{1},-Q_{2},2(Q_{3}+C_{3})).
\end{equation}
Note that, comparing with the original expression $Q_{1}^{3} - 3Q_{1}Q_{2} + 2Q_{3}$ predicted by inclusion-exclusion (cf. Example \ref{ex:case_three}), we recover a ``correction term.''

\begin{definition}
We denote by $C_{4}$ the integer
\begin{eqnarray*}
C_{4} & := & -\int_{Y} f_{\ast} \left(\frac{3}{2} \left\{c(N_{X}F)^2(\iota^{\ast}\nu^{\ast}c(T_{S})^{-1})^2 \cap [X] \right\}_{N-4}\right) \\
& & + \int_{Y} f_{\ast}\left( 2\left\{c(N_{X}F)\iota^{\ast}\nu^{\ast}c(T_{S})^{-1} \cap [X] \right\}_{N-4}\right).
\end{eqnarray*}
\hfill $\blacksquare$
\end{definition}

Recall that $P_{4}(x_{1},x_{2},x_{3},x_{4}) := x_1^4 + 6x_1^2x_2 + 4x_1x_3 + 3x_2^2 + x_4$ is the fourth Bell polynomial. Kleiman's 4-point formula \cite[Theorem 5.10]{Klei1} gives, by expanding the terms $m_2$ and $m_3,$
\begin{eqnarray*}
m_{4} & = & f^{\ast}f_{\ast}m_{3} - 3c_{1} \cap m_{3} + 6c_{2} \cap m_{2} - 6(c_{1}c_{2} + 2c_{3}) \cap m_{1} \\
& = & (f^{\ast}f_{\ast})^{3}[X] - 6c_{1} \cap (f^{\ast}f_{\ast})^{2}[X] + 8(c_{1}^{2}+c_{2}) \cap f^{\ast}f_{\ast}[X] \\ 
& & + 3c_{1} \cap f^{\ast}f_{\ast}(c_{1} \cap [X]) - 6(3c_{1}c_{2} + 2c_{3} +c_{1}^{3}) \cap [X].
\end{eqnarray*}
Now, $\mathscr{E}_{4}$ is defined as
\begin{equation}
\left\{c(N_{X}F)^3\iota^{\ast}\nu^{\ast}c(T_{S})^{-3} \cap [X]\right\}_{N-4},
\end{equation}
which corresponds to capping the coefficient of $t^3$ in $c(f)(t)^3,$ which is $c_{1}^3 + 6c_1c_2 + 3c_3,$ with $[X].$ Also, considering the terms appearing in the definition of $C_4,$ we have
\begin{equation}
\left\{c(N_{X}F)^2(\iota^{\ast}\nu^{\ast}c(T_{S})^{-1})^2 \cap [X] \right\}_{N-4}
\end{equation}
which corresponds to taking the coefficient of $t^3$ in $c(f)(t)^2,$ namely $2(c_1c_2 + c_3),$ and capping with $[X].$ Finally,
\begin{equation}
\left\{c(N_{X}F)(\iota^{\ast}\nu^{\ast}c(T_{S})^{-1}) \cap [X] \right\}_{N-4}
\end{equation}
corresponds to capping the coefficient of $t^3$ in $c(f)(t),$ namely $c_3,$ with $[X].$ Hence,
\begin{eqnarray*}
& & \int_{Y} -6 f_{\ast}((3c_{1}c_{2} + 2c_{3} +c_{1}^{3}) \cap [X])\\
& = & \int_{Y} -6 f_{\ast}((c_{1}^{3} + 6c_1c_2 + 3c_3 - 3/2 (2c_1c_2 + 2c_3) +2c_3) \cap [X]) \\
& = & -6(Q_{4}+C_{4}),
\end{eqnarray*}
and we see that
\begin{equation}
\int_{Y}f_{\ast}m_{4} = P_{4}(Q_1, -Q_2, 2(Q_3+C_3), -6(Q_4+C_4)).
\end{equation}

\begin{remark}\label{rem:multiple_bell}
There are two interesting observations to be made: First, we see that by combining certain terms in Kleiman's $r$-point formulas, we can express these formulas using Bell polynomials. Second, the ``correction terms'' $C_i,$ which a priori occur because we are trying to do inclusion-exclusion using objects (Segre classes) which do not behave well in this regard, are defined using the same classes which define the $Q_i,$ but considering parts of different dimensions. To state this more clearly, we introduce the class
\begin{equation}
\boxed{M_{r}(S,\mathscr{L}) := c(N_{X}F)^{r-1}(\iota^{\ast}\nu^{\ast}c(T_{S}))^{r-1} \cap [X]}
\end{equation}
for each $r \geq 2.$ Then $Q_{r}$ is obtained from the component of $M_{r}(S,\mathscr{L})$ of dimension $N-r,$ while we have
\begin{eqnarray*}
C_{3} & = & -\int_{Y} f_{\ast}\left\{M_{2}(S,\mathscr{L})\right\}_{N-3}, \\
C_{4} & = & -\int_{Y} f_{\ast} \left(3/2\left\{M_{3}(S,\mathscr{L})\right\}_{N-4} - 2\left\{M_{2}(S,\mathscr{L})\right\}_{N-4}\right).
\end{eqnarray*}
\end{remark}

We see this as evidence supporting the following conjecture (recall that $P_r$ denotes the $r$th complete Bell polynomial in $r$ variables):

\begin{conjecture}\label{conj:linearity_corr}
For $r \geq 1,$ there is a $\mathbb{Q}$-linear combination $C_{r}$ of the integers
\begin{equation}
\int_{Y} f_{\ast} \{M_{i}(S,\mathscr{L})\}_{N-r},
\end{equation}
for $2 \leq i \leq r-1,$ with $C_1 = C_2 = 0,$ such that
\begin{equation}
\int_{Y} f_{\ast}m_{r} = P_{r}(Q_1 + C_1, -2(Q_2 + C_2), \ldots, (-1)^{r-1}(r-1)!(Q_r+C_r)).
\end{equation}
\end{conjecture}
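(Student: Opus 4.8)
The plan is to prove the conjecture by strong induction on $r$, taking the cases $r\le 4$ worked out above (and summarized in Remark~\ref{rem:multiple_bell}) as the base, and feeding Kleiman's iterated multiple-point formula \cite{Klei1} into the pushdown machinery of Sections~\ref{sec:shape}--\ref{sec:equivalences}. Write $c_i:=c_i(f)$ for the Chern classes of the virtual normal bundle of the codimension-$1$ lci map $f\colon X\to Y$; in the form recalled above, Kleiman's iteration reads $m_1=[X]$ and $m_r=f^{*}f_{*}m_{r-1}-\sum_{j=1}^{r-1}\Phi_{r,j}(c_1,c_2,\dots)\cap m_{r-j}$, where each $\Phi_{r,j}$ is a universal weighted-homogeneous polynomial of degree $j$ in the $c_i$ (so $\Phi_{r,1}=(r-1)c_1$, and the $3$- and $4$-point formulas above record $\Phi_{3,2}$, $\Phi_{4,2}$, $\Phi_{4,3}$). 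The three ingredients I would use are: (i) the identity $\int_Y f_{*}(f^{*}f_{*}\alpha)=Q_1\cdot\int_Y f_{*}\alpha$, which is the projection formula exactly as in the double-point computation above, together with the multiplicative splitting of Proposition~\ref{prop:splitting} and Theorem~\ref{thm:split_equiv}; (ii) the recursion $P_n(x_1,\dots,x_n)=\sum_{k=1}^{n}\binom{n-1}{k-1}x_k\,P_{n-k}(x_1,\dots,x_{n-k})$ for the complete Bell polynomials, equivalent to the defining identity $(\ref{eqn:formal_id})$; and (iii) the observation of Remark~\ref{rem:multiple_bell} that the classes $M_i(S,\mathscr{L})$ are precisely the coefficient classes of the powers $c(f)(1)^{i-1}$ of the total series $c(f)(t)=c_t(N_XF)\,\iota^{*}\nu^{*}c_t(T_S)^{-1}$.

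Given these, the core of the argument is a reorganization of $\int_Y f_{*}m_r$ by weight. First strengthen the inductive hypothesis: I claim that for every Chern monomial $c_\lambda$ of $f$ of weighted degree $d$, the number $\int_Y f_{*}(c_\lambda\cap m_j)$ is a complete Bell polynomial of order $j+d$ in the arguments $(-1)^{i-1}(i-1)!(Q_i+C_i)$, supplemented by a controlled family of further corrections drawn from the $\int_Y f_{*}\{M_i\}_{N-\bullet}$. Applying $\int_Y f_{*}$ to Kleiman's iteration and using (i) together with the induction to absorb every $f^{*}f_{*}$ into a factor $Q_1$, each term of $\int_Y f_{*}m_r$ becomes a product of a power of $Q_1$ with integrals $\int_Y f_{*}(c_\lambda\cap[X])$, so one obtains a closed recursion. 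The point is then to match this recursion against $(\ref{eqn:formal_id})$: the part of each $\Phi_{r,j}$-term that ``completes'' to the top-codimension piece $\{M_r\}_{N-r}$ supplies the $Q_r$-contribution, while, by (iii), the remainder lies in the $\mathbb{Q}$-span of $\{M_2\}_{N-r},\dots,\{M_{r-1}\}_{N-r}$ and is, up to the factor $(-1)^{r-1}(r-1)!$, the sought-for $C_r$. Formulating this via the generating function $\Psi(t):=\sum_{r\ge 1}\bigl(\int_Y f_{*}m_r\bigr)t^{r}/r!$ is the cleanest bookkeeping: the recursion becomes a functional equation for $\Psi(t)$, and comparison with $\exp$ both forces $1+\Psi(t)=\exp\!\bigl(\sum_{i\ge 1}(-1)^{i-1}(i-1)!(Q_i+C_i)\,t^{i}/i!\bigr)$ and reads off the $C_r$.

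The step I expect to be the main obstacle is controlling which Chern monomials survive once the $Q_r$-term has been peeled off: Kleiman's coefficients $\Phi_{r,j}$ are themselves defined recursively and have no closed form, so a head-on combinatorial verification that the remainder involves only the ``total'' evaluations $c(f)(1)^{i-1}$ (and not arbitrary monomials $c_\mu$) seems hopeless. The way around this should be a rigidity statement, already visible in the $r\le 4$ calculations of Section~\ref{sec:equivalences}: for an lci morphism of codimension $1$, Kleiman's $m_r$ depends on $f$ only through the power series $c(f)(t)$ and the operator $f^{*}f_{*}$, so that every Chern class of $f$ enters solely through the combinations $c(f)(1)^{i-1}$. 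This collapses the a priori large space of possible corrections down to the $(r-2)$-dimensional span spanned by the $\int_Y f_{*}\{M_i\}_{N-r}$, $2\le i\le r-1$, named in the statement; proving it — for instance by re-deriving Kleiman's iteration entirely in terms of the symbols $c(f)(1)$ and $f^{*}f_{*}$ — is where the real work lies.

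What remains is routine. In the $r$-very-ample range Kleiman's genericity hypotheses hold by \cite[Proposition~2.1]{KST} (the input already used in the proof of Theorem~\ref{thm:shape}), so $m_r$ is the honest $r$-fold-point cycle and $\int_Y f_{*}m_r=r!\,N_r(S,\mathscr{L})=P_r(a_1,\dots,a_r)$; comparing with the generating-function identity and matching coefficients of the Bell polynomial then yields $a_i=(-1)^{i-1}(i-1)!(Q_i+C_i)$ — equivalently, that the embedded-component contribution $\int_Y f_{*}\bigl(B^{(i)}_{1\ldots i}-\mathscr{E}_i\bigr)$ to the small-diagonal equivalence equals $C_i$. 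The residual-intersection analysis of Section~\ref{sec:residual} gives an independent, more geometric route to this last identity, which could serve either as a cross-check or in place of the final matching step.
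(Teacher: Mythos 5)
First, note that the statement you are addressing is presented in the paper as a \emph{conjecture}: the paper offers no proof, only the explicit verification for $r\le 4$ obtained by massaging Kleiman's double-, triple- and quadruple-point formulas in Section~\ref{sec:equivalences} (which is exactly your base case). Your proposal does not close the gap. The decisive step --- your ``rigidity statement,'' namely that after absorbing every $f^{*}f_{*}$ into products of lower-order integrals and peeling off the coefficient of $t^{r-1}$ in $c(f)(t)^{r-1}$ (which yields $Q_r$), the remaining weighted-degree-$(r-1)$ polynomial in $c_1,\dots,c_{r-1}$ coming from Kleiman's iteration lies in the span of the coefficients of $t^{r-1}$ in $c(f)(t)^{i-1}$ for $2\le i\le r-1$ --- is precisely the content of the conjecture, and you explicitly defer it (``proving it \dots is where the real work lies''). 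It is not automatic: the space of weighted-homogeneous polynomials of degree $r-1$ in the $c_i$ has dimension equal to the number of partitions of $r-1$, whereas the proposed span has dimension at most $r-2$; already for $r=5$ a generic such polynomial fails to lie in it, so nothing short of a structural analysis of Kleiman's recursively defined coefficients forces them into this subspace. Your suggested remedy (re-deriving the iteration purely in terms of the symbols $c(f)(1)$ and $f^{*}f_{*}$) is a program, not an argument. The surrounding scaffolding (the projection-formula identity, the Bell recursion $P_n=\sum_k\binom{n-1}{k-1}x_kP_{n-k}$, the generating-function bookkeeping) is sound and is a reasonable way to organize an eventual proof, but as it stands the essential step is missing. (A minor point: the second argument in the displayed Bell polynomial should be $-(Q_2+C_2)$, i.e.\ $(-1)^{1}1!\,(Q_2+C_2)$, consistent with $\int_Y f_{*}m_2=Q_1^2-Q_2$; the ``$-2$'' in the statement is a typo you should not try to reproduce.)

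Separately, your closing paragraph contains a genuine error. Even in the $r$-very-ample range one does \emph{not} have $\int_Y f_{*}m_r=r!\,N_r(S,\mathscr{L})$: the $r$-fold-point cycle of $f\colon X\to Y$ records points of $Y$ with $r$ preimages in $X$, and these include curves with worse codimension-$r$ multisingularities. Already $m_2$ picks up cuspidal curves (as stated in Section~\ref{sec:equivalences}), and concretely $\int_Y f_{*}m_2=Q_1^2-Q_2=2(N_2+S_{A_2})\ne 2N_2$. Consequently the identification $a_i=(-1)^{i-1}(i-1)!(Q_i+C_i)$ you extract at the end is false: the paper's decomposition is $a_i=(-1)^{i-1}(i-1)!(Q_i+C_i+R_i)$ with a nonzero residual term, equivalently $a_i=(-1)^{i-1}(i-1)!(Q_i+C_i)-\sum_{\underline{\alpha}\in\Gamma_i^{\circ}}\tfrac{i!}{\#\textnormal{Aut}(\underline{\alpha})}S_{\underline{\alpha}}$, and the correction is already nonzero for $i=2$. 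This does not affect the conjecture itself, which concerns only $\int_Y f_{*}m_r$, but the ``routine'' final step must be deleted or replaced.
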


Our next aim is to compute the equivalence terms $Q_n$ in the case of the projective plane; this simplification allows for a clearer presentation, but it is not difficult to see that more generally, both the equivalence terms $Q_n$ and (at least for $n \leq 4$) the correction terms $C_n$ are linear combinations of the four Chern numbers of $(S,\mathscr{L}),$ and the general closed expressions for the $Q_n$ can be obtained following the same steps as below, although the computations are slightly more involved.

Let $S := \mathbb{P}^{2}$ and $\mathscr{L} := \mathscr{O}_{\mathbb{P}^{2}}(d).$ By Lemma \ref{prop:isom}, we know that $X$ is regularly embedded in $F$ with normal bundle
\begin{equation}\label{eqn:principal}
N_{X}F \cong \iota^{\ast}\mathscr{P}^{1}_{F/Y}\Bigl(\mathscr{O}_{\mathbb{P}^{2}}(d) \boxtimes \mathscr{O}_{Y}(1)\Bigr),
\end{equation}
and $[X] = c_3(N_{X}F).$

For a regular embedding $X \hookrightarrow Y$ we have $s(X,Y) = c(N_{X}Y)^{-1} \cap [X]$ by \cite[Section 4.2]{Ful}. Now, the embedding of $\Delta^{(n)}_{12\ldots n}$ in $F^{\times n}$ splits as $$\Delta^{(n)}_{12\ldots n} \hookrightarrow F \stackrel{\delta_{n}}{\hookrightarrow} F^{\times n},$$
where $\delta_n$ is the diagonal embedding. Hence
\begin{eqnarray*}
Q_{n} & = & \int_{Y} f_{\ast} (X_{1} \cdot \ldots \cdot X_{n})^{\Delta^{(n)}_{1\ldots n}} \\
& = & \int_{Y} f_{\ast} \left\{\prod_{i=1}^{n} c(p_{i}^{\ast}N_{X}F|\Delta^{(n)}_{1\ldots n}) \cap c(N_{\Delta^{(n)}_{1\ldots n} }F^{\times n})^{-1} \cap [\Delta^{(n)}_{1\ldots n} ]\right\}_{N-n} \\
& = & \int_{Y} f_{\ast} \left\{c(N_{X}F)^{n} c(N_{X}F)^{-1}c(N_{F}F^{\times n})^{-1} \cap [X] \right\}_{N-n} \\
& = & \int_{Y} f_{\ast} \left\{c(N_{X}F)^{n-1}c\left(T_{F/Y}^{\oplus(n-1)}\right)^{-1} \cap [X] \right\}_{N-n} \\
& = & \int_{Y} f_{\ast} \left\{c(N_{X}F)^{n-1}c(T_{F/Y})^{-(n-1)} \cap [X] \right\}_{N-n}.
\end{eqnarray*}

Let $l$ denote the class of a hyperplane on $\mathbb{P}^{2},$ and $H$ the class of a hyperplane on $Y = |\mathscr{L}| = \mathbb{P}^{N}.$ So $l^{3}=0$ and $H^{N+1} = 0.$ It is well-known that $c(T_{\mathbb{P}^{2}})^{-1} = 1-3l+6l^2.$ Hence the computation of $Q_{n}$ reduces to finding the coefficient of $H^{n}l^{2}$ in the polynomial
\begin{equation}\label{eqn:poly}
M_{n}(l,H,d) := \bigl(1 + H + (d-1)l \bigr)^{3(n-1)}(1-3l+6l^{2})^{n-1}\bigl(H+(d-1)l\bigr)^{3}.
\end{equation}
For this, we first extract the coefficient of $H^{n};$ this is a polynomial in $l$ and $d,$ from which we extract the coefficient of $l^{2}.$ We have:
\[
\begin{dcases}
\Bigl(1 + H+ (d-1)l \Bigr)^{3(n-1)} = \sum_{k=0}^{3n-3} {3n-3 \choose k}H^{k}\Bigl(1+(d-1)l\Bigr)^{3(n-1)-k}; \\
\Bigl(H+(d-1)l\Bigr)^{3} = H^{3} + 3H^{2}(d-1)l + 3H(d-1)^{2}l^{2},
\end{dcases}
\]
\noindent since $l^{3} = 0.$ Therefore, the coefficient of $H^{n}$ is easily shown to be

\begin{displaymath}
(1-3l+6l^{2})^{n-1}(1+(d-1)l)^{2n-2}(x_{n}l^{2} + y_{n}l + z_{n}),
\end{displaymath}
where
\[
\begin{dcases}
x_{n} := 3(d-1)^{2} {3n-3 \choose n-1} + 3(d-1)^{2} {3n-3 \choose n-2} + {3n-3 \choose n-3}(d-1)^{2}, \\
y_{n} := 3(d-1){3n-3 \choose n-2} + 2(d-1){3n-3 \choose n-3}, \\
z_{n} := {3n-3 \choose n-3}.
\end{dcases}
\]
To find the coefficient of $l^{2}$ in this expression, expand
\begin{displaymath}
(1-3l+6l^{2})^{n-1} = \sum_{k=0}^{2} {n-1 \choose k}3^{k}l^{k}(2l-1)^{k},
\end{displaymath}
with the convention that ${n \choose k} =0$ if $k > n.$ This is equal to
\begin{displaymath}
\alpha_{n} := 1 -3(n-1)l+\left(6(n-1)+9 {n-1 \choose 2}\right)l^{2}.
\end{displaymath}
On the other hand, we get
\begin{displaymath}
\beta_{n} := (1+(d-1)l)^{2n-2} = 1 + (2n-2)(d-1)l + {2n-2 \choose 2}(d-1)^{2}l^{2}.
\end{displaymath}
So we are looking for the coefficient of $l^{2}$ in the expression $\alpha_{n}\beta_{n}(x_{n}l^{2}+y_{n}l+z_{n}),$ which is
\begin{eqnarray*}
& & \left(6(n-1)+9{n-1 \choose 2}\right)z_{n} + {2n-2 \choose 2}(d-1)^{2}z_{n} + x_{n} \\
& - & 3(n-1)(2n-2)(d-1)z_{n} -3(n-1)y_{n} + (2n-2)(d-1)y_{n}.
\end{eqnarray*}
To conclude, we have the following theorem:

\begin{theorem}\label{thm:equiv_diag}
In the case of $\mathbb{P}^{2},$ the equivalence of the small diagonal $\Delta_{12\ldots n}$ for the intersection product $X_{1} \cdot \ldots \cdot X_{n}$ is a quadratic polynomial in $d,$ namely
\begin{equation}\label{eqn:equiv_diag}
Q_{n} = f_{n}d^{2} + g_{n}d + h_{n},
\end{equation}
where (after some simplifications):
\[
\begin{dcases}
f_{n} :=  3{3n-3 \choose n-1} + 3{3n-3 \choose n-2}(2n-1) + n{3n-3 \choose n-3}(2n-1), \\
g_{n} :=  -2n{3n-3 \choose n-3}(5n-4) - 3{3n-3 \choose n-2}(7n-5) - 6{3n-3 \choose n-1}, \\
h_{n} :=  {3n-3 \choose n-3}\left(\frac{25}{2}n^{2}-\frac{29}{2}+3\right) + 3{3n-3 \choose n-2}(5n-4) + 3{3n-3 \choose n-1}.
\end{dcases}
\]
\end{theorem}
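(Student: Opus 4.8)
The plan is to reduce the computation of $Q_{n}$ to reading off a single coefficient of an explicit polynomial, and then to verify the closed formula by elementary algebra. First I would invoke Example \ref{ex:critical_locus_P2}: over $S = \mathbb{P}^{2}$ the critical locus $X \subset F = \mathbb{P}^{2} \times Y$ is the complete intersection of the three hypersurfaces $\partial\varphi/\partial x_{i} = 0$, each of class $(d-1)l + H$, where $l$ is the hyperplane class on $\mathbb{P}^{2}$ and $H$ that on $Y = \mathbb{P}^{N}$. Hence $[X] = \xi = (H + (d-1)l)^{3}$ in $A^{\ast}(F)$, and by Proposition \ref{prop:isom} the normal bundle $N_{X}F$ is the restriction of $\mathscr{O}_{F}((d-1)l + H)^{\oplus 3}$, so that $c(N_{X}F) = (1 + H + (d-1)l)^{3}$. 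Since $T_{F/Y} \cong \nu^{\ast}T_{\mathbb{P}^{2}}$ and $c(T_{\mathbb{P}^{2}})^{-1} = 1 - 3l + 6l^{2}$ (using $l^{3} = 0$), we get $c(T_{F/Y})^{-(n-1)} = (1 - 3l + 6l^{2})^{n-1}$. Substituting into the formula $Q_{n} = \int_{Y} f_{\ast}\,\{c(N_{X}F)^{n-1}c(T_{F/Y})^{-(n-1)} \cap [X]\}_{N-n}$ derived above shows that $Q_{n}$ is read off from the polynomial $M_{n}(l,H,d)$ of \eqref{eqn:poly}.

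The second step is to pin down which coefficient of $M_{n}$ equals $Q_{n}$. Writing $f = \gamma_{1} \circ \iota$ and using the projection formula, $\gamma_{1\ast} \colon A^{\ast}(F) \to A^{\ast}(Y)$ is integration over the fibre $\mathbb{P}^{2}$: it sends $l^{a}H^{b}$ to $H^{b}$ when $a = 2$, to $0$ when $a \leq 1$, while $l^{a} = 0$ for $a \geq 3$. Combined with $\int_{Y} H^{N} = 1$, this means that $Q_{n}$ is exactly the coefficient of $l^{2}H^{n}$ in $M_{n}(l,H,d)$. In particular $Q_{n}$ is automatically a polynomial in $d$ of degree at most $2$, and only its precise shape is at issue.

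The extraction of that coefficient is carried out in two passes, as in the computation preceding the statement. First expand $(1 + H + (d-1)l)^{3n-3}$ and $(H + (d-1)l)^{3}$ by the binomial theorem, discard all powers $l^{j}$ with $j \geq 3$, and read off the coefficient of $H^{n}$; it equals $(1 - 3l + 6l^{2})^{n-1}(1 + (d-1)l)^{2n-2}(x_{n}l^{2} + y_{n}l + z_{n})$ with $x_{n}, y_{n}, z_{n}$ as displayed above. Then truncate $(1 - 3l + 6l^{2})^{n-1}$ and $(1 + (d-1)l)^{2n-2}$ at order $l^{2}$ and take the coefficient of $l^{2}$, which produces the (unsimplified) expression for $Q_{n}$ displayed just before the theorem. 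The final step is then a pure algebraic identity: substitute the values of $x_{n}, y_{n}, z_{n}$, expand using $(d-1)^{2} = d^{2} - 2d + 1$, and collect the result by powers of $d$ and by the three binomial coefficients ${3n-3 \choose n-1}$, ${3n-3 \choose n-2}$, ${3n-3 \choose n-3}$; simplifying the numerical prefactors with ${n-1 \choose 2} = (n-1)(n-2)/2$ and ${2n-2 \choose 2} = (n-1)(2n-3)$ gives precisely $f_{n}d^{2} + g_{n}d + h_{n}$.

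I expect no conceptual obstacle; the one place demanding care is this last simplification, which is lengthy binomial bookkeeping. One must also adopt the convention ${m \choose k} = 0$ for $k < 0$ or $k > m$, so that the formulas stay valid for the small values $n = 1, 2$ (where ${3n-3 \choose n-3}$ has negative lower index and vanishes); with that convention the identity can be checked term by term, or confirmed in a computer algebra system.
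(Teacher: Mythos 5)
Your proposal follows the paper's own proof essentially verbatim: the paper likewise reduces $Q_n$ to the coefficient of $l^{2}H^{n}$ in the polynomial $M_n(l,H,d)$ of \eqref{eqn:poly} (it identifies $N_XF\cong\iota^{\ast}\mathscr{P}^{1}_{F/Y}(\mathscr{O}_{\mathbb{P}^2}(d)\boxtimes\mathscr{O}_Y(1))$, whose total Chern class coincides with the one you obtain from the complete-intersection description), and then performs exactly the same two-pass binomial extraction via $x_n,y_n,z_n$ and the truncations $\alpha_n,\beta_n$ of $(1-3l+6l^{2})^{n-1}$ and $(1+(d-1)l)^{2n-2}$. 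The only thing your final bookkeeping step would surface is that the printed $h_n$ should read $\binom{3n-3}{n-3}\left(\frac{25}{2}n^{2}-\frac{29}{2}n+3\right)+\cdots$ (a factor of $n$ is missing in the paper's second term), as one checks against $Q_3=150d^{2}-444d+315$ from Table \ref{table:equivalences}.
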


\begin{table}
\centering
\begin{tabular}{|l||l|l|}
\hline
$n$ & $Q_{n}$ & $C_{n}$ \\
\hline
1 & $3d^2-6d+3$ & 0 \\
2 & $18d^2-45d+27$ & 0 \\
3 & $150d^2-444d+315$ & $-(30d^2-96d+72)$ \\
4 & $1260d^2 -4140d + 3285$ & $-(420d^2 - 1425d + 1158)$\\
\hline
\end{tabular}
\caption{Equivalence and correction terms for $1 \leq n \leq 4.$}
\label{table:equivalences}
\end{table}

\begin{remark}\label{rem:combinations}
Above, we saw that the ``correction terms'' $C_{i}$ were linear combinations of terms which arose from the same polynomials $M_{n}(l,H,d),$ but extracting different coefficients. Of course, one can obtain closed formulas for these coefficients, proceeding the same way as above. For $1 \leq n \leq 4,$ the concrete expressions for $Q_{n}$ and $C_{n}$ are provided in Table \ref{table:equivalences}.
\end{remark}

\section{On the residual term}\label{sec:residual}
Recall that, up to a factor $(-1)^{i-1}(i-1)!,$ $a_{i}(S,\mathscr{L})$ was defined as the degree of the pushdown through $\gamma_1$ of $\iota_{\ast}B^{(i)}_{1\ldots i} \in  A^{i}(S \times Y),$ where $\iota$ denotes the inclusion $X \hookrightarrow F.$ In the previous section, we treated the contribution from the small diagonal $\Delta^{(i)}_{12\ldots i},$ while neglecting the contribution from embedded components, i.e., distinguished varieties having support inside this diagonal. Thus, the remaining question, which we explore in this section, is how the embedded components (the ``residual'' locus) contributes to $a_i.$

Assume that $\mathscr{L}$ is $r$-very ample, so that there is no interference from for instance non-reduced curves (cf. Theorem \ref{thm:shape}). The multiplicative structure imposed by the lattice of polydiagonals applies for the embedded components as well, so it suffices to study the embedded components with support on the small diagonal $\Delta^{(r)}_{12\ldots r}.$ We wish to show that the components supported on the small diagonal contribute linearly in the four Chern numbers of $(S,\mathscr{L});$ this is achieved, with the exception of one conjectural result (Conjecture \ref{conj:dep}). The geometric interpretation of the contribution is neither immediate nor easy, but is discussed towards the end of the section for low values of $r.$

Let $\Delta^{(r)}_{X}$ be the small diagonal in $X^{\times r}$ and $\Delta^{(r)}_{F}$ the small diagonal in $F^{\times r}.$ The arguments themselves are purely of technical nature. We proceed as follows: Let $V_{r}$ denote the blowup of $F^{\times r}$ along the small diagonal $\Delta^{(r)}_{F},$ and let $D_{r}$ be the exceptional divisor. We denote by $\widetilde{X_{i}}$ the strict transform of $X_{i}$ under the morphism $\pi_{r}: V_{r} \rightarrow F^{\times r}.$

Consider the subschemes $W_{r}$ and $W_{r}(X)$ of $V_{r}$ whose sheaves of ideals are
\begin{eqnarray*}
\mathscr{I}_{W_{r}} & := & \mathscr{I}_{D_{r}} \cdot \left(\mathscr{I}_{D_{r}} + \sum_{i=1}^{r} \mathscr{I}_{\widetilde{X_{i}}}\right) \\
\mathscr{I}_{W_{r}(X)} & := & \mathscr{I}_{\pi_{r}^{-1}(\Delta^{(r)}_{X})} \cdot \left(\mathscr{I}_{D_{r}} + \sum_{i=1}^{r} \mathscr{I}_{\widetilde{X_{i}}}\right).
\end{eqnarray*}
Then $W_{r}(X)$ is regularly embedded in $W_{r}$ with normal bundle
\begin{equation}\label{eqn:normal_bundle}
N_{W_{r}(X)}W_{r} \cong \eta_{r}^{\ast} N_{\Delta^{(r)}_{X}}\Delta^{(r)}_{F},
\end{equation}
with $\eta_{r}$ the restriction of $\pi_{r}$ to the small diagonal of $X^{\times r}.$ We may consider the residual scheme $\textnormal{Res}_{r}$ of the divisor $D_{r}$ in $W_{r}.$ Then, according to \cite[Propostion 9.2]{Ful}, we have for all $m \geq 0,$
\begin{equation}\label{eqn:relate_segre}
s(W_{r},V_{r})_{m} = s(D_{r},V_{r})_{m} + \mathscr{R}(r)_{m} \in A_{\ast}(D_{r}),
\end{equation}
where we have defined
\begin{equation}\label{eqn:define_res}
\mathscr{R}(r)_{m} := \sum_{j=0}^{N+2r-m}{N+2r-m \choose j}[-D_{r}]^{j}s(\textnormal{Res}_{r},V_{r})_{m+j}.
\end{equation}

It follows that the contribution to $X_{1} \cdot \ldots \cdot X_{r}$ from the small diagonal \textit{with} embedded components is
\footnotesize
\begin{eqnarray*}
\eta_{r\ast} (X_{1} \cdot \ldots \cdot X_{r})^{W_{r}(X)} & = & \eta_{r\ast}\left\{\prod_{i=1}^{r} c(\eta_{r}^{\ast}N_{i}) \cap s(W_{r}(X),V_{r})\right\}_{N-r} \\
& = & \eta_{r\ast}\left\{\prod_{i=1}^{r} c(\eta_{r}^{\ast}N_{i})c(\eta_{r}^{\ast}N_{\Delta^{(r)}_{X}}\Delta^{(r)}_{F})^{-1} \cap s(W_{r},V_{r})\right\}_{N-r} \\
& = & (X_{1} \cdot \ldots \cdot X_{r})^{\Delta^{(r)}_{X}} + \eta_{r\ast} \sum_{m \geq 0} \left\{c\left(\eta_{r}^{\ast}N_{\Delta^{(r)}_{X}}\Delta^{(r)}_{F}\right)^{r-1} \cap \mathscr{R}(r)_{m}\right\}_{N-r}
\end{eqnarray*}
\normalsize
where $N_{i}$ denotes the restriction to the small diagonal of the normal bundle of $X_{i}$ in $F^{\times r}.$ The last equality follows from Eq. (\ref{eqn:relate_segre}).

The following theorem, due to Keel, expresses the Chow ring of a blow-up. Let $V$ be a variety and let $i: U \hookrightarrow V$ be a regularly embedded subvariety of codimension $d.$ Denote by $N$ the normal bundle of $U$ in $V.$ Let $\pi: \widetilde{V} \rightarrow V$ be the blow-up of $V$ along $U$ and denote by $\widetilde{U}$ the exceptional divisor. Define $g$ and $j$ by the commutative diagram:
\[
\xymatrix
{
\widetilde{U} \ar^{j}[r] \ar_{g}[d] & \widetilde{V} \ar^{\pi}[d] \\
 U \ar^{i}[r] & V
}
\]
Let $P(t)$ be any polynomial whose constant term is $[U] \in A^{\ast}(V)$ and whose restriction to $A^{\ast}(U)$ is the Chern polynomial of the normal bundle $N,$ that is,
\begin{equation}\label{eqn:chern_poly}
i^{\ast}P(T) = t^{d} + t^{d-1}c_{1}(N) + \ldots + c_{d-1}(N)t + c_{d}(N).
\end{equation}
\begin{theorem}\label{thm:chow_blowup}
\emph{(\cite{Keel}, Theorem 1 of Appendix.)}
Suppose the map of bivariant rings $i^{\ast}: A^{\ast}(V) \rightarrow A^{\ast}(U)$ is surjective. Then $A^{\ast}(\widetilde{V})$ is isomorphic to
\begin{equation}\label{eqn:chow_iso}
\frac{A^{\ast}(V)[t]}{(P(t),t \cdot \textnormal{ker}(i^{\ast}))}.
\end{equation}
This isomorphism is induced by
\begin{displaymath}
\pi^{\ast}: A^{\ast}(V) \rightarrow A^{\ast}(\widetilde{V})
\end{displaymath}
and by sending $-t$ to the class of the exceptional divisor.
\end{theorem}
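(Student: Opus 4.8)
I would prove the isomorphism by constructing an explicit ring homomorphism $\Phi$ from the right-hand side of (\ref{eqn:chow_iso}) to $A^{\ast}(\widetilde{V})$, showing it is surjective via the blow-up decomposition of Chow groups, and then showing it is injective by matching module structures over $A^{\ast}(V)$. Write $d$ for the codimension of $U$, $E := \widetilde{U}$ for the exceptional divisor, $N := N_{U}V$ for its normal bundle (so $E = \mathbb{P}(N)$), and $\zeta := c_{1}(\mathscr{O}_{E}(1))$, so that $j^{\ast}[E] = -\zeta$ and the Grothendieck relation on $E$ expresses $\zeta^{d}$ through $1, \zeta, \ldots, \zeta^{d-1}$ with coefficients pulled back from $A^{\ast}(U)$; note that surjectivity of $i^{\ast}$ is already needed for a polynomial $P$ as in (\ref{eqn:chern_poly}) to exist. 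The map $\pi^{\ast}$ together with $t \mapsto -[E]$ extends to a ring map $A^{\ast}(V)[t] \to A^{\ast}(\widetilde{V})$, and I must check it kills the two generators of the relation ideal. For $t\cdot\ker(i^{\ast})$: if $b \in \ker(i^{\ast})$ then $\pi^{\ast}b\cdot(-[E]) = -\pi^{\ast}b\cdot j_{\ast}(1) = -j_{\ast}(j^{\ast}\pi^{\ast}b) = -j_{\ast}(g^{\ast}i^{\ast}b) = 0$, using the projection formula and $j^{\ast}\pi^{\ast} = g^{\ast}i^{\ast}$. For $P(t)$: from $j^{\ast}[E] = -\zeta$ one gets $(-[E])^{k} = -j_{\ast}(\zeta^{k-1})$ for $k \geq 1$, while the standard blow-up identity reads $\pi^{\ast}[U] = j_{\ast}(c_{d-1}(\mathscr{Q}))$, where $\mathscr{Q} = g^{\ast}N/\mathscr{O}_{E}(-1)$ has $c_{d-1}(\mathscr{Q}) = \sum_{l=0}^{d-1} g^{\ast}c_{l}(N)\,\zeta^{d-1-l}$ (\cite[\S 6.7]{Ful}). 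Expanding $\pi^{\ast}P(-[E])$, applying the projection formula and $j^{\ast}\pi^{\ast} = g^{\ast}i^{\ast}$ termwise, and invoking the hypothesis $i^{\ast}P(t) = \sum_{l} c_{l}(N)t^{d-l}$ to identify the resulting coefficients with those of $c_{d-1}(\mathscr{Q})$, a short computation yields $\pi^{\ast}P(-[E]) = -j_{\ast}(c_{d-1}(\mathscr{Q})) + \pi^{\ast}[U] = 0$, so $\Phi$ is well defined.

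Next I would prove that $\Phi$ is surjective. The blow-up exact sequence for Chow groups (\cite[\S 6.7]{Ful}) gives $A^{\ast}(\widetilde{V}) = \pi^{\ast}A^{\ast}(V) + j_{\ast}A^{\ast}(E)$, and the projective bundle formula gives $A^{\ast}(E) = \bigoplus_{k=0}^{d-1}\zeta^{k}\,g^{\ast}A^{\ast}(U)$, so it suffices to handle a class $j_{\ast}(\zeta^{k}g^{\ast}\alpha)$. Lifting $\alpha$ to $\tilde{\alpha} \in A^{\ast}(V)$ by surjectivity of $i^{\ast}$, one has $j_{\ast}(\zeta^{k}g^{\ast}\alpha) = j_{\ast}(\zeta^{k}\,j^{\ast}\pi^{\ast}\tilde{\alpha}) = \pm[E]^{k+1}\cdot\pi^{\ast}\tilde{\alpha}$, which lies in the subring generated by $\pi^{\ast}A^{\ast}(V)$ and $[E]$. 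Rewriting $[E]^{d}$ and higher powers through $1, [E], \ldots, [E]^{d-1}$ by the relation $P(-[E]) = 0$, I conclude $A^{\ast}(\widetilde{V}) = \sum_{k=0}^{d-1}[E]^{k}\pi^{\ast}A^{\ast}(V)$, which is the image of $\Phi$.

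For injectivity, I would compare module structures over $A^{\ast}(V)$. The source ring is generated over $A^{\ast}(V)$ by $1, t, \ldots, t^{d-1}$, and the relations $t\cdot\ker(i^{\ast}) = 0$ force each summand $A^{\ast}(V)\cdot t^{k}$ with $k \geq 1$ to factor through $A^{\ast}(V)/\ker(i^{\ast}) \cong A^{\ast}(U)$; the claim to verify is that the source is \emph{exactly} the graded module $A^{\ast}(V)\cdot 1 \oplus \bigoplus_{k=1}^{d-1}A^{\ast}(U)\cdot t^{k}$, the delicate point being that the relation $P(t) = 0$, whose constant term $[U]$ need not lie in $\ker(i^{\ast})$, only re-expresses the already discarded power $t^{d}$ and so does not collapse the free summand $A^{\ast}(V)\cdot 1$. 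On the geometric side, Fulton's blow-up formula (\cite[\S 6.7]{Ful}) presents $A_{\ast}(\widetilde{V})$ as $\pi^{\ast}A_{\ast}(V) \oplus \bigoplus_{k=1}^{d-1}j_{\ast}(\zeta^{k-1}g^{\ast}A_{\ast}(U))$. Since $\Phi$ sends $t^{k} \mapsto (-[E])^{k}$ and hence carries the algebraic decomposition onto the geometric one, with matching graded ranks in each summand, the surjection $\Phi$ must be an isomorphism; equivalently, the geometric decomposition $y = \pi^{\ast}y_{0} + \sum_{k\geq 1}j_{\ast}(\zeta^{k-1}g^{\ast}y_{k})$ furnishes a candidate inverse $y \mapsto \sum_{k} y_{k}t^{k}$, which the construction and surjectivity arguments above show is two-sided.

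The main obstacle is this last step: checking that the algebraic relations $(P(t),\, t\cdot\ker(i^{\ast}))$ impose exactly — and nothing beyond — the module structure dictated by the geometric blow-up formula, and in particular that the non-zero constant term $[U]$ of $P$ meshes with the $t\cdot\ker(i^{\ast})$ relations without destroying the free rank-one summand. Transferring cleanly between the operational ring $A^{\ast}$, in which the theorem is phrased, and the Chow homology $A_{\ast}$, where the blow-up formula naturally lives, for possibly singular $V$, is precisely where surjectivity of $i^{\ast}$ is indispensable and where the bookkeeping demands care.
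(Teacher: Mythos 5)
This statement is quoted verbatim from Keel (Theorem 1 of the Appendix of \cite{Keel}) and the paper gives no proof of it at all --- it is used as a black box to derive Corollary \ref{cor:chow_conf} --- so there is no internal argument to compare yours against. Your sketch is, in substance, the standard proof (essentially Keel's own): the verification that the relations die is correct --- $t\cdot\ker(i^{\ast})$ is killed by the projection formula together with $j^{\ast}\pi^{\ast}=g^{\ast}i^{\ast}$, and $P(-[E])=0$ follows from $(-[E])^{k}=-j_{\ast}(\zeta^{k-1})$ combined with the key formula $\pi^{\ast}[U]=j_{\ast}(c_{d-1}(\mathscr{Q}))$ and the hypothesis \eqref{eqn:chern_poly} --- and the surjectivity argument correctly isolates where surjectivity of $i^{\ast}$ is used (to lift $\alpha\in A^{\ast}(U)$ so that $j_{\ast}(\zeta^{k}g^{\ast}\alpha)$ becomes $\pm[E]^{k+1}\pi^{\ast}\tilde{\alpha}$). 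The only place I would press you is injectivity: the phrase ``matching graded ranks in each summand'' is not an argument for modules of infinite rank, and on its own it proves nothing. What actually closes the gap is the candidate inverse you mention in passing: one must check that the normal form $a_{0}+\sum_{k=1}^{d-1}a_{k}t^{k}$ (with $a_{0}\in A^{\ast}(V)$ and $a_{k}$ well defined modulo $\ker(i^{\ast})$ for $k\geq 1$) exhausts the quotient ring, i.e.\ that no element of the ideal $(P(t),t\cdot\ker(i^{\ast}))$ of $t$-degree $<d$ has a nonzero constant term, and then that $y\mapsto\sum_{k}y_{k}t^{k}$ read off from the splitting $A_{\ast}(\widetilde{V})\cong A_{\ast}(V)\oplus\bigoplus_{k=1}^{d-1}A_{\ast}(U)$ is a well-defined two-sided inverse. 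You correctly identify this as the delicate point, so the sketch is sound; it just needs that step written out rather than asserted.
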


We use this theorem to describe the Chow ring of the blow-up $V_{r}:$

\begin{corollary}\label{cor:chow_conf}
The Chow ring of $V_{r}$ is
\begin{equation}\label{eqn:conf_iso}
A^{\ast}(V_{r}) = A^{\ast}(F^{\times r})[D_r]/I_{r},
\end{equation}
where $I_{r}$ is the ideal generated by the following elements:
\begin{enumerate}
\item all $[D_r] \cdot (p_{i}^{\ast}\alpha - p_{j}^{\ast}\alpha)$ for $\alpha \in A^{\ast}(F^{\times r});$
\item $J_r \cdot [D_r],$ where $J_r$ is the kernel of the restriction map $\delta_{r}^{\ast}: A^{\ast}(F^{\times r}) \rightarrow A^{\ast}(\Delta^{(r)}_{F});$
\item $P_{r}(-[D_r]),$ where $P_{r}(t) := t^{2r-2} + \sum_{i=1}^{2r-2}\nu^{\ast}c_{i}(T_{S}^{\oplus (r-1)})t^{2r-2-i},$ $\nu$ being the projection from $F = S \times Y$ to the surface $S.$
\end{enumerate}
\end{corollary}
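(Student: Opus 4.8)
The strategy is to verify the three hypotheses of Keel's theorem (Theorem \ref{thm:chow_blowup}) for the particular regular embedding $\delta_r : F^{\times r} \hookrightarrow F^{\times r}$ — wait, rather $\delta_r$ is the embedding of the small diagonal $\Delta^{(r)}_F$ inside $F^{\times r}$ — and then to transcribe the conclusion into the notation introduced in the statement. First I would record that $\Delta^{(r)}_F$ is regularly embedded in $F^{\times r}$: since $F = S \times Y$ is smooth and the fiber product is taken over $Y$, the small diagonal is cut out, locally, by the pullbacks of the diagonal of $S \times_{\mathbb{C}} S$ under the $r-1$ projections comparing the $i$th and $(i+1)$st factor, so it is smooth of codimension $2(r-1)$, with normal bundle $N_{\Delta^{(r)}_F} F^{\times r} \cong T_{F/Y}^{\oplus(r-1)} \cong T_S^{\oplus(r-1)}$ (pulled back via $\nu$). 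This already identifies the Chern polynomial $i^{\ast}P(t)$ appearing in (\ref{eqn:chern_poly}) with the polynomial $P_r(t)$ of part (3).

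The key hypothesis to check is that $\delta_r^{\ast}: A^{\ast}(F^{\times r}) \to A^{\ast}(\Delta^{(r)}_F)$ is surjective. This follows because $\Delta^{(r)}_F \cong F$ and the composite $F \xrightarrow{\delta_r} F^{\times r} \xrightarrow{p_1} F$ is the identity; hence $p_1^{\ast}$ is a section of $\delta_r^{\ast}$, which is therefore (split) surjective. Granting this, Keel's theorem applies verbatim and gives
\begin{displaymath}
A^{\ast}(V_r) \;\cong\; \frac{A^{\ast}(F^{\times r})[t]}{\bigl(P(t),\; t\cdot \ker(\delta_r^{\ast})\bigr)},
\end{displaymath}
with $\pi_r^{\ast}$ the structure map and $-t \mapsto [D_r]$. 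Substituting $t = -[D_r]$ turns $P(t)$ into $P_r(-[D_r])$ as in part (3), and turns $t \cdot \ker(\delta_r^{\ast})$ into $J_r \cdot [D_r]$ as in part (2), since a sign on a generator of an ideal is irrelevant.

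It remains to explain why the ideal of part (1) — the relations $[D_r]\cdot(p_i^{\ast}\alpha - p_j^{\ast}\alpha)$ — is already contained in $J_r \cdot [D_r]$, so that listing it separately is merely a convenience: for any $\alpha$, the difference $p_i^{\ast}\alpha - p_j^{\ast}\alpha$ lies in $\ker(\delta_r^{\ast})$ because $\delta_r$ identifies all the projections, hence $[D_r]\cdot(p_i^{\ast}\alpha - p_j^{\ast}\alpha)$ is one of the elements $t\cdot\ker(\delta_r^{\ast})$ after the substitution $t = -[D_r]$. Conversely these particular elements already generate enough of $J_r$ for the geometric applications in the next section, which is why the author singles them out. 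The only genuine obstacle I anticipate is verifying the surjectivity hypothesis cleanly; once the splitting $p_1^{\ast}$ is noted this is immediate, and everything else is bookkeeping translating Keel's $t$ into $-[D_r]$ and matching the Chern polynomial of $T_S^{\oplus(r-1)}$ with $P_r$.
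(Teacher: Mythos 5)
Your proposal is correct and follows essentially the same route as the paper: the paper's own proof simply invokes Keel's theorem together with the identification $N_{\Delta^{(r)}_{F}}F^{\times r} \cong (\nu^{\ast}T_{S})^{\oplus(r-1)}$, and you supply the same ingredients, merely spelling out the details the paper leaves implicit (the surjectivity of $\delta_{r}^{\ast}$ via the splitting $p_{1}^{\ast}$, the substitution $t \mapsto -[D_r]$, and the observation that the relations in part (1) already lie in $J_r \cdot [D_r]$).
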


\begin{proof}
This follows easily from Theorem \ref{thm:chow_blowup}, and using the fact that, in the Grothendieck ring,
\begin{equation}
N_{\Delta^{(r)}_{F}}F^{\times r} = T_{F/Y}^{\oplus (r-1)} = (\nu^{\ast}T_{S})^{\oplus (r-1)}.
\end{equation}
\end{proof}

Next, we describe the Chow ring of the divisor $D_{r}:$

\begin{proposition}\label{prop:chow_div}
For all $r \geq 2,$ the Chow ring $A^{\ast}(D_{r})$ is
\begin{equation}\label{eqn:iso_div}
A^{\ast}(D_{r}) = A^{\ast}(V_r)/K_r,
\end{equation}
where $K_r$ is the ideal generated by all $p_{i}^{\ast}\alpha - p_{j}^{\ast}\alpha$ for $i,j \in [r]$ and $\alpha \in A^{\ast}(F).$
\end{proposition}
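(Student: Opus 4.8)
The plan is to identify $D_r$ with the projectivization of the normal bundle of the small diagonal, compute its Chow ring via the projective bundle theorem, and compare the outcome with the presentation of $A^{\ast}(V_r)$ furnished by Corollary \ref{cor:chow_conf}; the assertion then collapses to one algebraic statement about restriction to a diagonal.

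First I would pin down the geometry. As $F^{\times r}$ is smooth and the small diagonal $\Delta^{(r)}_{F}\cong F$ is smooth, hence regularly embedded, of codimension $2r-2$, the exceptional divisor $D_r$ is the projective bundle $\mathbb{P}(N)$ over $\Delta^{(r)}_{F}$, where $N:=N_{\Delta^{(r)}_{F}}F^{\times r}$ is $(\nu^{\ast}T_{S})^{\oplus(r-1)}$ in the Grothendieck group — the identification already used in the proof of Corollary \ref{cor:chow_conf}. Writing $g_r\colon D_r\to\Delta^{(r)}_{F}\cong F$ for the bundle map, $j_r\colon D_r\hookrightarrow V_r$ for the inclusion, and $\zeta:=c_1(\mathscr{O}_{D_r}(1))=-j_r^{\ast}[D_r]$, the projective bundle theorem gives
$$
A^{\ast}(D_r)\;=\;A^{\ast}(F)[\zeta]\,\big/\,\bigl(P_r(\zeta)\bigr),
$$
with $P_r$ the monic polynomial of degree $2r-2$ having coefficients $\nu^{\ast}c_i(T_S^{\oplus(r-1)})$ — exactly the polynomial of Corollary \ref{cor:chow_conf}(3).

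Next I would carry out the comparison. Under $j_r^{\ast}$ the class $-[D_r]$ goes to $\zeta$, while $A^{\ast}(F^{\times r})$ is routed through $\delta_r^{\ast}\colon A^{\ast}(F^{\times r})\twoheadrightarrow A^{\ast}(F)$; since the ideal $I_r$ of Corollary \ref{cor:chow_conf} already contains $P_r(-[D_r])$, $[D_r]\cdot J_r$, and $[D_r]\cdot(p_i^{\ast}\alpha-p_j^{\ast}\alpha)$, a direct matching of the two presentations shows that $j_r^{\ast}$ is surjective with kernel the ideal generated by $J_r=\ker\delta_r^{\ast}$ — this is just the standard description of the Chow ring of a blow-up's exceptional divisor, read off from Theorem \ref{thm:chow_blowup}. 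Thus $A^{\ast}(D_r)=A^{\ast}(V_r)/\langle J_r\rangle$, and the proposition reduces to the ideal identity $\langle J_r\rangle=K_r$. One inclusion is trivial, as $p_i^{\ast}\alpha-p_j^{\ast}\alpha\in\ker\delta_r^{\ast}$ for every $\alpha$; for the other it is enough to show that $\ker\delta_r^{\ast}$ is generated, as an ideal, by the differences $p_i^{\ast}\alpha-p_j^{\ast}\alpha$, $\alpha\in A^{\ast}(F)$.

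I expect this last step to be the real obstacle — everything before it is bookkeeping. I would handle it via the explicit form $F^{\times r}\cong S\times\cdots\times S\times Y$: as $Y=\mathbb{P}^{N}$ is cellular the problem passes to the $r$-fold self-product of $S$, i.e.\ to showing that the kernel of restriction to the small diagonal $A^{\ast}(S\times\cdots\times S)\to A^{\ast}(S)$ is generated by the differences $q_i^{\ast}\gamma-q_j^{\ast}\gamma$ of pullbacks along the coordinate projections. I would prove this by induction on the number of factors, factoring the small diagonal as the composition of the diagonal $S\hookrightarrow S\times S$, $x\mapsto(x,x)$, with the map that is the small diagonal of the first $r-1$ coordinates and the identity on the last: $\ker\delta_r^{\ast}$ then becomes the ideal generated by the kernel of the $(r-1)$-factor case (the inductive hypothesis, viewed among the first $r-1$ projections) together with a lift of the generator $\mathrm{pr}_1^{\ast}\gamma-\mathrm{pr}_2^{\ast}\gamma$ of the two-factor base case, and such a lift is again a difference class $q_1^{\ast}\gamma-q_r^{\ast}\gamma$. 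The delicate inputs are therefore the base case $r=2$ — that $\ker(A^{\ast}(S\times S)\to A^{\ast}(S))$ is generated by $\mathrm{pr}_1^{\ast}\gamma-\mathrm{pr}_2^{\ast}\gamma$ — and the Künneth splittings used to peel off factors; both hold under the hypotheses in force, and are automatic whenever $S$ admits a cellular decomposition, in particular for $S=\mathbb{P}^{2}$. Granting them, $\langle J_r\rangle=K_r$, and hence $A^{\ast}(D_r)=A^{\ast}(V_r)/K_r$, as claimed.
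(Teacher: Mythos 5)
Your argument takes a genuinely different route from the paper, whose entire proof is the citation ``This follows from \cite[Corollary 7b]{FM}.'' Your reduction is correct and, in fact, isolates exactly what that citation is hiding: identifying $D_r$ with $\mathbb{P}(N_{\Delta^{(r)}_F}F^{\times r})$, invoking the projective bundle theorem, and comparing with Keel's presentation of $A^{\ast}(V_r)$ does show that $\ker j_r^{\ast}$ is the ideal generated by $\pi_r^{\ast}J_r$ with $J_r=\ker\delta_r^{\ast}$ (the relations $P_r(-[D_r])=0$ and $[D_r]\cdot J_r=0$ absorb all higher powers of $[D_r]$, and freeness of $A^{\ast}(D_r)$ over $A^{\ast}(F)$ on $1,\zeta,\dots,\zeta^{2r-3}$ forces each coefficient into $J_r$). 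So the proposition is indeed equivalent to the ideal identity $\langle J_r\rangle=K_r$, and your bookkeeping up to that point is sound.

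The gap is in the last step, precisely where you predict the obstacle and then wave it away: the assertion that the Künneth splitting of $A^{\ast}(S^{\times r})$ and the base case ``$\ker\bigl(A^{\ast}(S\times S)\to A^{\ast}(S)\bigr)$ is generated by $\mathrm{pr}_1^{\ast}\gamma-\mathrm{pr}_2^{\ast}\gamma$'' \emph{hold under the hypotheses in force}. They do not: the standing hypothesis is only that $S$ is a smooth irreducible projective surface, and for such $S$ the exterior product $A^{\ast}(S)\otimes A^{\ast}(S)\to A^{\ast}(S\times S)$ is in general far from surjective. When it is surjective your base case is the standard fact that the kernel of the multiplication map $R\otimes R\to R$ is generated by $a\otimes 1-1\otimes a$, and the induction goes through; this covers cellular surfaces, in particular $S=\mathbb{P}^2$. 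But for, say, an abelian surface $A$, the group $A^{1}(A\times A)$ contains Poincar\'e-type divisor classes coming from $\mathrm{Hom}(A,\widehat{A})$ which are not sums of pullbacks from the factors; the degree-one part of $K_2$ is just the span of the differences $q_1^{\ast}D-q_2^{\ast}D$, whereas $\ker\delta^{\ast}\cap A^{1}(A\times A)$ contains elements with nonzero $\mathrm{Hom}(A,\widehat{A})$-component. So $\langle J_2\rangle\neq K_2$ there, and your proof cannot close. You should either restrict the statement to surfaces whose Chow ring satisfies Künneth (which is all the paper ever uses computationally), or check the exact hypotheses of \cite[Corollary 7b]{FM} --- your reduction makes clear that some such hypothesis is genuinely needed, not merely convenient.
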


\begin{proof}
 This follows from \cite[Corollary 7b]{FM}.
\end{proof}

The residual scheme $\textnormal{Res}_{r}$ is a subscheme of $V_{r}$ whose sheaf of ideals is
\begin{equation}\label{eqn:res_sheaf}
\mathscr{I}^{Res}_{r} := \mathscr{I}_{D_{r}} + \sum_{i=1}^{r} \mathscr{I}_{\widetilde{X_{i}}},
\end{equation}
i.e., it is the scheme-theoretic intersection $\textnormal{Res}_{r} = D_{r} \cap \bigcap_{i=1}^{r} \widetilde{X_{i}}.$ We introduce some notations: Let $L := c_{1}(\mathscr{L})$ and $K:= c_{1}(\mathscr{K}_{S}),$ which are classes in $A^{1}(S).$ The second Chern class of $S$ is denoted by $x \in A^{2}(S).$ Let $L,K$ and $x$ also denote their own pullbacks, through $\nu,$ to $F.$ Finally, let $H$ be the class of a hyperplane in $Y = \mathbb{P}^{N},$ and its pullback to $F.$ We consider $L,K,H$ to be weighted variables of degree 1, while $x$ is considered to have degree 2.

\begin{conjecture}\label{conj:dep}
The Segre class of $\textnormal{Res}_{r}$ in $V_{r},$ expressed in the Chow ring of $D_{r},$ is a polynomial in $L, K, H, [D_{r}]$ and $x.$
\end{conjecture}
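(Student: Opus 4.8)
The plan is to reduce the statement to the assertion that $s(\textnormal{Res}_{r},V_{r})$ lies in the subring $R_{r}\subset A^{\ast}(D_{r})$ generated by $L$, $K$, $H$, $x$ and $[D_{r}]$, and then to establish this by exhibiting $\textnormal{Res}_{r}$, together with a locally free resolution of its ideal sheaf, in terms of bundles built functorially out of $\mathscr{L}$, $T_{S}$ and $\mathscr{O}_{Y}(1)$. The reduction is essentially formal. By Corollary \ref{cor:chow_conf} and Proposition \ref{prop:chow_div}, $A^{\ast}(D_{r})$ is a quotient of $A^{\ast}(F^{\times r})[D_{r}]$ in which all the $p_{i}^{\ast}$ are identified, and $D_{r}=\mathbb{P}\bigl((\nu^{\ast}T_{S})^{\oplus(r-1)}\bigr)$ is a projective bundle over $F=S\times Y$; hence $A^{\ast}(D_{r})$ is generated, as a ring, by $\nu^{\ast}A^{\ast}(S)$ together with $H$ and $[D_{r}]$, modulo relations whose coefficients are polynomials in $c_{1}(T_{S})=-K$ and $c_{2}(T_{S})=x$. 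Consequently, every class on $D_{r}$ obtained by Chern-class operations on pullbacks of $\mathscr{L}$, $T_{S}$, $\mathscr{O}_{Y}(1)$ and of $\mathscr{O}(D_{r})$ automatically lies in $R_{r}$. I also note that, since $\dim S=2$, every monomial of degree $\geq 3$ in divisor classes on $S$ vanishes; so once the conjecture is proved, the residual contribution to $a_{r}(S,\mathscr{L})$ is immediately seen to be linear in the four Chern numbers $(\partial,k,s,x)$, which is the statement ultimately needed.

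Concretely, the first step is to describe the intersection of the strict transform $\widetilde{X_{i}}$ with the exceptional divisor $D_{r}$. By Proposition \ref{prop:isom}, $X$, and hence $X_{i}=p_{i}^{-1}(X)$, is the zero scheme of a section of $p_{i}^{\ast}\mathscr{P}^{1}_{F/Y}(\widetilde{\mathscr{L}})$, a bundle functorial in $\mathscr{L}$ and $T_{S}$. Along the small diagonal $\Delta^{(r)}_{F}$ this section vanishes on $\Delta^{(r)}_{X}$, and its normal derivative along $\Delta^{(r)}_{F}$ determines, after the appropriate twist, a morphism of bundles from $N_{\Delta^{(r)}_{F}}F^{\times r}$ to the restriction of $p_{i}^{\ast}\mathscr{P}^{1}_{F/Y}(\widetilde{\mathscr{L}})$; the projectivization of its zero locus is $\widetilde{X_{i}}\cap D_{r}$. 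The differences of the defining sections for distinct indices vanish identically on $\Delta^{(r)}_{F}$ and carry the extra first-order data that separates the blocks. Assembling these, one exhibits the residual scheme $\textnormal{Res}_{r}=D_{r}\cap\bigcap_{i=1}^{r}\widetilde{X_{i}}$, whose ideal sheaf is $\mathscr{I}_{D_{r}}+\sum_{i}\mathscr{I}_{\widetilde{X_{i}}}$ by \eqref{eqn:res_sheaf}, as the degeneracy locus of an explicit map between functorial bundles on $D_{r}$. One then writes the induced resolution of $\mathscr{O}_{\textnormal{Res}_{r}}$ (a Koszul- or Eagon--Northcott-type complex), reads off $s(\textnormal{Res}_{r},V_{r})$ from it, and observes that every term is a Chern class of one of these bundles and thus lies in $R_{r}$; feeding this into \eqref{eqn:relate_segre} and \eqref{eqn:normal_bundle} then disposes of the residual contribution. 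For $r=2$ the residual scheme is tightly linked to the cuspidal locus and this computation is entirely tractable; the real issue is to make the argument uniform in $r$.

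The main obstacle is that $\textnormal{Res}_{r}$ is not, in general, a local complete intersection in $V_{r}$: the ideal $\mathscr{I}_{D_{r}}+\sum_{i}\mathscr{I}_{\widetilde{X_{i}}}$ has more generators than its codimension, the strict transforms $\widetilde{X_{i}}$ being mutually tangent along $D_{r}$, so its Segre class is genuinely sensitive to the scheme structure and cannot be read off from a single regular section. This is precisely the phenomenon that makes iterated multiple-point theory delicate for many points, and the combinatorics of the tangencies worsens with $r$. The natural remedy is to replace the single blow-up $V_{r}$ by an iterated construction in the spirit of Fulton--MacPherson or Ulyanov, along which the successive transforms of $\textnormal{Res}_{r}$ become divisors, or at least local complete intersections, and to track the Chow ring through the tower by repeated application of Theorem \ref{thm:chow_blowup}; the crux is then to verify that each blow-up centre and each new exceptional class remains inside the subring generated by $L$, $K$, $H$, $x$ and the exceptional divisors --- equivalently, that no ``exotic'' class on $S$ is ever introduced. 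A weaker but possibly more accessible route is to bypass the Segre class altogether and prove only that the integral $\int_{Y}f_{\ast}(\,\cdot\,)$ of the residual contribution is linear in $(\partial,k,s,x)$, by a specialization argument that deforms the pair $(S,\mathscr{L})$ while keeping the Chern numbers as free parameters.
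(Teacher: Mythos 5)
First, be aware that the statement you are asked to prove appears in the paper as Conjecture \ref{conj:dep} and is \emph{not} proved there: the paper supports it only with a short plausibility remark. Your opening reduction --- writing $X_i$ as the zero scheme of a section of $p_i^{\ast}\mathscr{P}^1_{F/Y}(\widetilde{\mathscr{L}})$, twisting by $\mathscr{O}_{V_r}(-D_r)$ to describe the strict transforms, and invoking Proposition \ref{prop:chow_div} to identify the pullbacks $p_i^{\ast}L$, $p_i^{\ast}K$, $p_i^{\ast}H$, $p_i^{\ast}x$ in $A^{\ast}(D_r)$ --- is essentially identical to that remark, and it is sound as far as it goes: any class produced by Chern-class operations on these functorial bundles does lie in the subring generated by $L$, $K$, $H$, $x$ and $[D_r]$.

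However, your proposal does not close the conjecture, and to your credit you say so yourself. The entire content of the statement is that $s(\textnormal{Res}_r,V_r)$ is such a class, and that is exactly the step left open. The degeneracy-locus/Eagon--Northcott route sketched in your second paragraph would give the result if it could be executed, but your third paragraph correctly explains why it cannot be executed as stated: $\textnormal{Res}_r$ is not a local complete intersection (the $\widetilde{X_i}$ are mutually tangent along $D_r$), and the Segre class of a non-lci subscheme is not determined by a locally free resolution of its structure sheaf via Chern-class formulas alone --- it depends on the embedding in a way no such complex controls. The two remedies you then propose (an iterated Fulton--MacPherson-style tower of blow-ups, or a specialization argument that only targets the integral $\int_Y f_{\ast}(\cdot)$) are left entirely unexecuted, and the first faces the same verification problem one level up, namely that each new blow-up centre must be shown to stay inside the distinguished subring. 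So the gap is precisely the conjecture itself: what you have written is the paper's own heuristic plus a correct and valuable diagnosis of why it falls short of a proof, but not a proof. If you want to make progress, the case $r=2$, where the residual scheme is governed by the cuspidal locus and is explicitly tractable (compare Example \ref{ex:two_nodes}), is the right place to test whether your iterated-blow-up proposal actually produces regularly embedded centres.
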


\begin{remark}
By Proposition \ref{prop:isom}, $X$ is the zero scheme of a section $z'$ of the vector bundle $\mathscr{P}^{1}_{F/Y}(\widetilde{\mathscr{L}}),$ and is regularly embedded in $F$ of codimension 3 (and the Chern class of its normal bundle is a polynomial in $L,K,H$ and $x$). It follows that $X_{i}$ is the zero scheme of $p_{i}^{\ast}z',$ a section of $p_{i}^{\ast}\mathscr{P}^{1}_{F/Y}(\widetilde{\mathscr{L}}),$ and the strict transform $\widetilde{X_i}$ is the zero scheme of the induced section of
\begin{displaymath}
\pi^{\ast}p_i^{\ast}\mathscr{P}^{1}_{F/Y}(\widetilde{\mathscr{L}}) \otimes \mathscr{O}_{V_{r}}(-D_{r}).
\end{displaymath}
Hence, it seems plausible that the push-forward to $D_r$ of the Segre class of $\bigcap_{i=1}^{r} (\widetilde{X_i} \cap D_r)$ in $V_r$ is a function only of the pullbacks of $L,K,H,x$ through the projections $p_j,$ and $[D_r].$ Now, by Proposition \ref{prop:chow_div}, it follows that $p_i^{\ast}L = p_j^{\ast}L$ in $A^{\ast}(D_r),$ for all $i,j,$ and similarly for $K,H$ and $x.$ Hence, the push-forward of this Segre class to $D_r$ should be a polynomial in $L,K,H,x$ and $[D_r]$ only.
\end{remark}

The (push-forward of the) class $\eta_{r\ast}(X_{1} \cdot \ldots \cdot X_{r})^{W_{r}(X)}$ lives in $A^{\ast}(\Delta^{(r)}_{F}) \cong A^{\ast}(F),$ hence we make the following definition:

\begin{definition}\label{def:res_contrib}
For each $r \geq 2,$ define
\begin{equation}\label{eqn:res_contrib}
R_{r} := \sum_{m \geq 0}\int_{Y} f_{\ast} \eta_{r\ast} \left\{c(\eta_{r}^{\ast}N_{\Delta^{(r)}_{X}}\Delta^{(r)}_{F})^{r-1} \cap \mathscr{R}(r)_{m}\right\}_{N-r}.
\end{equation} \hfill $\blacksquare$
\end{definition}

This is the (degree of) the contribution from embedded components supported on $\Delta^{(r)}_{X}.$ The following statement is then a consequence of Conjecture \ref{conj:dep}:

\begin{conj-prop}\label{conj-prop:linearity}
There exists a linear polynomial $T^{\text Res}_{r}$ in four variables and with integer coefficients, such that
\begin{equation}\label{eqn:polynomiality}
R_{r} = T^{\text Res}_{r}(\partial,k,s,x),
\end{equation}
where $\partial := \mathscr{L}^{2}, k := \mathscr{LK}_{S},s:=\mathscr{K}_{S}^{2},x:=c_{2}(S)$ are the four Chern numbers of the pair $(S,\mathscr{L}).$
\end{conj-prop}

\begin{proof}
We are interested in the $(N-r)$-dimensional part of the class
\footnotesize
\begin{eqnarray*}
\sigma_{r} & := & \eta_{r\ast} \sum_{m \geq 0}c\left(\eta_{r}^{\ast}N_{\Delta^{(r)}_{X}}\Delta^{(r)}_{F}\right)^{r-1} \cap \mathscr{R}(r)_{m} = \sum_{m \geq 0} c\left(N_{\Delta^{(r)}_{X}}\Delta^{(r)}_{F}\right)^{r-1} \cap \eta_{r\ast}\mathscr{R}(r)_{m} \\
& = & \sum_{m \geq 0}\sum_{j=0}^{N+2r-m} (-1)^{j} {N+2r-m \choose j} c\left(N_{\Delta^{(r)}_{X}}\Delta^{(r)}_{F}\right)^{r-1} \cap \eta_{r\ast}\left([D_{r}]^{j} \cdot s_{m+j} \right) \in A_{\ast}(\Delta^{(r)}_{F}),
\end{eqnarray*}
\normalsize
where $s_{m+j}$ is the component of dimension $m+j$ of the Segre class $s(\textnormal{Res}_{r},V_{r}).$ By Conjecture \ref{conj:dep} and the fact that the exceptional divisor $D_{r}$ satisfies a polynomial equation involving $L,K,H$ and $x$ (cf. Corollary \ref{cor:chow_conf}), $\sigma_{r}$ is a polynomial in these four classes. Since the dimension of $\Delta^{(r)}_{F}$ is $N+2,$ the part of the class $\sigma_{r}$ of dimension $N-r$ is the part of this polynomial of total degree $N+2 - (N-r) = r+2.$ Pushing down to $Y$ and multiplying with $H^{N-r}$ kills everything but the part involving $H^{r},$ and we are left with a quadratic polynomial in $L,K$ and $x$ when $x$ is considered to have degree 2, i.e., a linear polynomial in $\partial,k,s$ and $x$ (when $x$ is considered to have degree 1).
\end{proof}

To summarize our results at this point, we have the following decomposition of $a_i(S,\mathscr{L}):$
\begin{equation}
a_{i}(S,\mathscr{L}) = (-1)^{i-1}(i-1)!(Q_{i} + C_{i} + R_{i}).
\end{equation}
The equivalence term $Q_i$ can be computed and given a closed formula, and is a linear combination (with coefficients which are integers) of the Chern numbers of $(S,\mathscr{L}).$ The correction term $C_i$ can a priori also be computed and shown to have the same behaviour (and for $1 \leq i \leq 4,$ this is a theorem by the previous section). The residual term $R_i$ is a linear combination of the four Chern numbers $\partial,k,s,x,$ provided Conjecture \ref{conj:dep} holds. Thus, we have to a large extent identified the $a_i(S,\mathscr{L}).$

Note that, as proposed in \cite[Theorem 2.1]{Qvi}, one can also use the G\"ottsche--Yau--Zaslow formula (cf. Conjecture \ref{conj:gen_got}) together with some power series manipulations to show that each $a_{i}(S,\mathscr{L})$ must have the desired behaviour, namely that for each $i \geq 1,$ the integer $a_{i}(S,\mathscr{L})$ defined above is the value taken on $(\partial,k,s,x)$ by a universal, linear polynomial in four variables with integer coefficients. It is convenient to denote these polynomials by $a_{i}(\partial,k,s,x).$ Hence, there exist sequences of integers $\{D_{i}\}_{i \geq 1}, \{E_{i}\}_{i \geq 1}, \{F_{i}\}_{i \geq 1}$ and $\{G_{i}\}_{i \geq 1}$ such that
\begin{equation}
a_{i}(\partial,k,s,x) = (-1)^{i-1}(i-1)!(D_{i}\partial + E_{i} k +F_{i}s + G_{i}x).
\end{equation}

One can even compute the polynomials $a_{i}(\partial,k,s,x)$ from the G\"ottsche--Yau--Zaslow formula, altough this depends on knowing the coefficients of the power series $B_{1}(q)$ and $B_{2}(q),$ which are still not well understood. G\"ottsche computed these power series up to degree 28, cf. \cite[Remark 2.5]{Got}, a computation which depends on the fact (recently proven by Kleiman--Shende in \cite{KS}) that plane $r$-nodal curves of degree $d$ are enumerated by universal polynomials when $r \leq 2d-2.$ The algorithm for extracting the $a_i$ from the G\"ottsche--Yau--Zaslow formula is presented in \cite[Algorithm 2.1]{Qvi}; its output is collected in Table \ref{table:polys} for $1 \leq i \leq 15.$ The polynomials $\widetilde{a}_{i}(\partial, k, s, x)$ are obtained by dividing $a_{i}(\partial, k, s, x)$ by $(i-1)!.$

\begin{table}
\centering
\begin{sideways}
\scriptsize
\begin{tabular}{|l|l|}
\hline
$a_{1} =$ & $3\partial +  2k +  x$ \\
\hline
$a_{2} =$ & -- 42$\partial$ -- 39k -- 6s -- 7x\\
\hline
$a_{3} =$ & 1380$\partial$ + 1576k + 376s + 138x\\
\hline
$a_{4} =$ & --72360$\partial$ --95670k -- 28842s --3888x \\
\hline
$a_{5} =$ & 5225472$\partial$ +  7725168k + 2723400s + 84384x\\
\hline
$a_{6} =$ & -- 481239360$\partial$ -- 778065120k -- 308078520s + 7918560x\\
\hline
$a_{7} =$ & 53917151040$\partial$ + 93895251840k + 40747613760s -- 2465471520x\\
\hline
$a_{8} =$ & -- 7118400139200$\partial$ -- 13206119880240k -- 6179605765200s  + 516524964480x \\
\hline
$a_{9} =$ & 1082298739737600$\partial$ + 2121324101971200k + 1057994510106240s -- 105531591674880x  \\
\hline
$a_{10}=$ & -- 186244876934645760$\partial$ -- 383178257123397120k -- 201938068481143680s + 22522077486397440x\\
\hline
$a_{11}=$ & 35785074342095769600$\partial$ + 76882882686451430400k + 42529950621208512000s -- 5120189378609356800x\\
\hline
$a_{12}=$ & -- 7593954156671416934400$\partial$ -- 16965814444711292160000k -- 9799242960045675628800s + 
 1246637955659688345600x \\
\hline
$a_{13}=$ & 1764002599954269954048000$\partial$ + 4083791314361072077209600k + 2452287375661994231961600s -- 325131495890223904358400x\\
\hline
$a_{14}=$ & --445196702136181894778880000$\partial$ -- 1064857909823340069685248000k -- 662444750461765046378803200s + 90666752530924449021542400x \\
\hline
$a_{15}=$ & 121304301227469541054089216000$\partial$ + 299017798634897453079185817600k + 192137539658526071385289113600s -- 26963216698297962471175987200x \\
\hline
\hline
$\widetilde{a}_{1} =$ & 3$\partial$ +  2k +  x \\
\hline
$\widetilde{a}_{2} =$ & --42$\partial$ -- 39k -- 6s -- 7x\\
\hline
$\widetilde{a}_{3} =$ & 690$\partial$ + 788k + 188s + 69x\\
\hline
$\widetilde{a}_{4} =$ & --12060$\partial$ -- 15945k -- 4807s -- 648x \\
\hline
$\widetilde{a}_{5} =$ & 217728$\partial$ +  321882k + 113475s + 3516x\\
\hline
$\widetilde{a}_{6} =$ & -- 4010328$\partial$ -- 6483876k -- 2567321s + 65988x\\
\hline
$\widetilde{a}_{7} =$ & 74884932$\partial$ + 130410072k + 56593908s -- 3424266x\\
\hline
$\widetilde{a}_{8} =$ & --1412380980$\partial$ -- 2620261881k -- 1226112255s  + 102485112x \\
\hline
$\widetilde{a}_{9} =$ & 26842726680$\partial$ + 52612204910k + 26239943207s -- 2617350984x  \\
\hline
$\widetilde{a}_{10}=$ & -- 513240952752$\partial$ -- 1055936555124k -- 556487181661s + 62064807888x\\
\hline
$\widetilde{a}_{11}=$ & 9861407170992$\partial$ + 21186861410508k + 11720114258490s -- 1410986931936x\\
\hline
$\widetilde{a}_{12}=$ & --190244562607008$\partial$ -- 425029422316200k -- 245491696730341s + 31230909182592x \\
\hline
$\widetilde{a}_{13}=$ & 3682665360521280$\partial$ + 8525631885908256k + 5119580760611226s -- 678769122880224x \\
\hline
$\widetilde{a}_{14}=$ & --71494333556133600$\partial$ -- 171005998538392560k  -- 106382292871378404s -- 14560213534363728x\\
\hline
$\widetilde{a}_{15}=$ & 1391450779290676680$\partial$ + 3429957097334083248k + 2203960837196658328s -- 309288199242633956x\\
\hline
\hline
\end{tabular}
\end{sideways}
\caption{The polynomials $a_{i}(\partial,s,k,x).$}
\label{table:polys}
\end{table}
\normalsize

Now, inverting the argument, both $B_{1}(q)$ and $B_{2}(q)$ can be deduced from the $a_i.$ Applying the G\"ottsche--Yau--Zaslow formula for an algebraic surface $S$ with $\chi(\mathscr{O}_{S})=0$ (and therefore with $x=-s$), and with $\mathscr{L}$ trivial, we get
\begin{equation}
\sum_{r \geq 0} Z_{r}(0,0,s,-s)(DG_{2}(\tau))^{r} = B_{1}(q)^{s}.
\end{equation}
Assume $B_{1}(q) = \sum_{r=0}^{\infty}b^{(1)}_{r}q^{r},$ and $\log B_{1}(q) = \sum_{r=1}^{\infty} c^{(1)}_{r}q^{r}.$ Then, by the definition of Bell polynomials (cf. Eq. (\ref{eqn:formal_id})),
\begin{equation}
b^{(1)}_{r} = \frac{P_{r}(1!c_{1}^{(1)},\ldots, r!c_{r}^{(1)})}{r!},
\end{equation}
so the $b^{(1)}_{r}$ can be deduced from the $c^{(1)}_{r}.$ Let $y_{r}(n)$ denote the coefficient of $q^{n}$ in $(DG_{2}(\tau))^{r}.$ Writing
\begin{displaymath}
a_{i} = (-1)^{i-1}(i-1)!(D_{i}\partial + E_{i}k + F_{i}s + G_{i}x) = (-1)^{i-1}(i-1)!(F_{i}-G_{i})s,
\end{displaymath}
we get the equality
\begin{equation}
\sum_{r=1}^{\infty} \frac{(-1)^{r-1}(F_{r}-G_{r})}{r} \sum_{n=1}^{\infty} y_{r}(n)q^{n} = \sum_{n=1}^{\infty} c^{(1)}_{n}q^{n},
\end{equation}
hence
\begin{equation}
c^{(1)}_{n} = \sum_{r= 1}^{\infty} y_{r}(n)\frac{(-1)^{r-1}(F_r-G_r)}{r}.
\end{equation}
Thus, $B_{1}(q)$ can be deduced from the $a_{i},$ and a similar argument holds for $B_{2}(q).$ This motivates a further study of the $a_i;$ in particular, we include what seems to be an interesting observation. Recall that for each $n \geq 1,$ we can write
\begin{equation}
a_{n}(\partial, k, s, x) = (-1)^{n-1}(n-1)!\left(D_{n}\partial + E_{n}k + F_{n}s + G_{n}x\right)
\end{equation}
for integers $D_{n}, E_{n}, F_{n}, G_{n}.$

Define sequences $D := \{D_{n+1}/D_{n}\}_{n \geq 1},$ $E := \{E_{n+1}/E_{n}\}_{n \geq 1},$ etc. The first terms of these sequences are collected in Table \ref{table:quotients}. In light of these values, we propose the following conjecture:

\begin{conjecture}\label{conj:division}
The four sequences $D,E,F$ and $G$ defined above are convergent.
\end{conjecture}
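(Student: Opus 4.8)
\textbf{Step 1: reduction to the asymptotics of explicit generating functions.} By Theorem \ref{thm:main} and the defining identity \eqref{eqn:formal_id} for the complete Bell polynomials, $\sum_{r \geq 0} Z_{r}\,t^{r} = \exp\!\bigl(\sum_{l \geq 1} a_{l}\,t^{l}/l!\bigr)$ as formal power series; taking logarithms and inserting $a_{l} = (-1)^{l-1}(l-1)!\bigl(D_{l}\partial + E_{l}k + F_{l}s + G_{l}x\bigr)$ yields
\begin{equation}
\log\Bigl(\sum_{r \geq 0} Z_{r}\,t^{r}\Bigr) = \sum_{l \geq 1} \frac{(-1)^{l-1}}{l}\bigl(D_{l}\partial + E_{l}k + F_{l}s + G_{l}x\bigr)\,t^{l}.
\end{equation}
Since $DG_{2}(q) = q + O(q^{2})$ is a power-series automorphism of $q\,\mathbb{C}[[q]]$, let $q = q(t)$ be its compositional inverse; substituting into the G\"ottsche--Yau--Zaslow formula (Conjecture \ref{conj:gen_got}, now a theorem), taking logarithms, and using Riemann--Roch $\chi(\mathscr{L}) = \chi(\mathscr{O}_{S}) + \tfrac12(\partial-k)$ and Noether's formula $\chi(\mathscr{O}_{S}) = \tfrac{1}{12}(s+x)$ to isolate the Chern numbers, one obtains
\begin{equation}
\sum_{l \geq 1} \tfrac{(-1)^{l-1}}{l}\, D_{l}\,t^{l} = \tfrac12\,\Phi(t), \qquad \sum_{l \geq 1} \tfrac{(-1)^{l-1}}{l}\, G_{l}\,t^{l} = \tfrac{1}{12}\Phi(t) - \tfrac{1}{24}\Theta(t),
\end{equation}
while the series for $E$ and $F$ carry, in addition, the summands $\Psi_{2}(t)$ and $\Psi_{1}(t)$ respectively; here $\Phi(t) := \log\!\bigl(DG_{2}(q(t))/q(t)\bigr)$, $\Theta(t) := \log\!\bigl(\Delta(q(t))\,D^{2}G_{2}(q(t))/q(t)^{2}\bigr)$ and $\Psi_{i}(t) := \log B_{i}(q(t))$ are power series in $t$ with vanishing constant term (a check: $\tfrac12\Phi(t) = 3t - 21t^{2} + \cdots$, recovering $D_{1}=3$, $D_{2}=42$). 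Hence, with $\Phi_{l} := [t^{l}]\Phi$, etc., $D_{l} = \tfrac{(-1)^{l-1}l}{2}\Phi_{l}$ and $D_{l+1}/D_{l} = -\tfrac{l+1}{l}\,\Phi_{l+1}/\Phi_{l}$, and likewise $E, F, G$ reduce to coefficient ratios of the above combinations of $\Phi$, $\Psi_{1}$, $\Psi_{2}$, $\Theta$; so the conjecture is equivalent to convergence of those ratios.

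\textbf{Step 2: singularity analysis.} The plan is to locate and classify the dominant singularity of $q(t)$ and transfer it through the logarithms. As $DG_{2}$ has positive Taylor coefficients and $DG_{2}(q) \to +\infty$ when $q \to 1^{-}$ (by the transformation law of $G_{2}$ under $\tau \mapsto -1/\tau$), the map $DG_{2}$ is a real-analytic increasing bijection $(0,1) \to (0,\infty)$, so $q(t)$ continues analytically along all of $(0,\infty)$; the dominant singularity must therefore come from a critical point of $DG_{2}$, i.e.\ a zero of $\tfrac{d}{dq}DG_{2} = D^{2}G_{2}/q = 1 + 12q + 36q^{2} + \cdots$. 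I expect the nearest such zero to be real and negative, $q_{\ast} \approx -0.1$, with critical value $t = -\rho := DG_{2}(q_{\ast}) \approx -0.05$, producing a square-root branch point of $q(t)$ at $t = -\rho$. The steps would be: (i) prove such a $q_{\ast} \in (-1,0)$ exists and lies closer to $0$ than any complex critical point and than any point where $q(t)$ reaches $|q|=1$, so that $t = -\rho$ is the \emph{unique} dominant singularity of $q(t)$; (ii) verify $\Delta$ and $D^{2}G_{2}$ have no zero in $|q| \le |q_{\ast}|$, so $\Theta$ inherits exactly this singularity; (iii) invoke the transfer theorems of singularity analysis (Flajolet--Odlyzko, or Darboux) to get $\Phi_{l} \sim c_{\Phi}(-1)^{l}\rho^{-l}l^{-3/2}$ and $\Theta_{l} \sim c_{\Theta}(-1)^{l}\rho^{-l}l^{-3/2}$ with $c_{\Phi}, c_{\Theta} \ne 0$, whence $D_{l+1}/D_{l} \to \rho^{-1}$ and, provided $2c_{\Phi} \ne c_{\Theta}$, also $G_{l+1}/G_{l} \to \rho^{-1}$. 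For $E$ and $F$ one additionally needs $B_{1}, B_{2}$ analytic with a singularity no closer than $|q_{\ast}|$ and of compatible type, so that $\Psi_{1}, \Psi_{2}$ likewise inherit the branch point at $-\rho$; granting this, all four ratio sequences converge to the common value $\rho^{-1}$ (numerically $\approx 20$, in accordance with Table \ref{table:quotients}).

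\textbf{Step 3: the main obstacles.} The hard part is the analytic study of the inversion $q \leftrightarrow DG_{2}(q)$: since $DG_{2}$ has no critical point on $(0,1)$, pinning down $\rho$ and proving the dominant singularity is the \emph{isolated real} branch point $t = -\rho$ --- rather than a conjugate pair of complex singularities of equal or smaller modulus --- requires genuine control of $DG_{2}$ (equivalently of $E_{4}-E_{2}^{2}$) inside the unit disc, which is presumably why the statement is only conjectural. The sign irregularities exhibited by $F$ and $G$ in Table \ref{table:quotients}, by contrast with the smooth behaviour of $D$, are a warning that either the leading amplitude $2c_{\Phi}-c_{\Theta}$ (and its $\Psi_{1}$-analogue) nearly cancels --- forcing one down to a possibly oscillatory subdominant term --- or merely that the asymptotic regime has not set in by $l = 15$; distinguishing these is the delicate point, and if a conjugate pair genuinely dominates some combination the conjecture would need to be reinterpreted (e.g.\ as convergence of $|D_{l+1}/D_{l}|$, etc.). A second, independent obstacle, specific to $E$ and $F$, is that $B_{1}$ and $B_{2}$ are not known in closed form; a complete proof would require either a modular or algebraic identification of $B_{1}, B_{2}$, or a self-consistent bootstrap via the fact --- established in this paper --- that $B_{1}, B_{2}$ are recoverable from the $a_{i}$, though the latter seems to presuppose the asymptotics in question. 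I would therefore first settle $D$ and $G$, which involve only quasi-modular data, and expect the common limit to be $\rho^{-1}$.
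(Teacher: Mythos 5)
First, an important caveat: the statement you were asked to prove is presented in the paper as a \emph{conjecture}, supported only by the numerical evidence collected in Table \ref{table:quotients}; the paper offers no proof, so there is no argument of the author's to compare yours against. Judged on its own terms, your Step 1 is correct and is essentially the same manipulation used in \cite{Qvi} to derive Theorem \ref{thm:main}: exponentiating the Bell-polynomial structure and matching logarithms against the G\"ottsche--Yau--Zaslow formula does identify $\sum_{l}\frac{(-1)^{l-1}}{l}D_{l}t^{l}$ with $\frac12\log\bigl(DG_{2}(q(t))/q(t)\bigr)$, and similarly for $E,F,G$ (your check $D_{1}=3$, $D_{2}=42$ is right). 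This is a genuinely useful reduction: it converts the conjecture into the assertion that certain explicit (respectively, only partially explicit) power series have coefficient ratios converging to the reciprocal of a common radius of convergence.

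What follows, however, is a program rather than a proof, as you acknowledge. The substantive gaps are these. (i) You never establish that the compositional inverse $q(t)$ of $DG_{2}$ has a unique dominant singularity, that it is the real branch point $t=DG_{2}(q_{\ast})$ at a real critical point $q_{\ast}\in(-1,0)$, and that neither complex critical points nor the fact that $DG_{2}$ has $|q|=1$ as a natural boundary produce a singularity of $q(t)$ of equal or smaller modulus; this is the analytic heart of the matter and is left entirely to ``I expect.'' (ii) The transfer step needs the leading amplitudes ($c_{\Phi}$, $2c_{\Phi}-c_{\Theta}$, and their $\Psi_{i}$-corrected analogues) to be nonzero; you flag this but do not address it, and the sign irregularities of the $G$-column in Table \ref{table:quotients} suggest it is not a formality --- if a cancellation or a conjugate pair of singularities dominates, the ratio sequence need not converge at all, so the conjecture itself could fail in the form stated. (iii) For $E$ and $F$ the series $B_{1},B_{2}$ are not identified (the paper emphasizes this), so no unconditional argument for those two sequences is currently possible. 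In short, the proposal is a sensible and well-motivated plan that correctly isolates where the difficulty lies, but it does not establish the conjecture; executing it would require new analytic input on $DG_{2}$ and on $B_{1},B_{2}$.
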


Provided convergence can be proved, it would be interesting to at least know whether all four sequences converge towards the same number (which, it would seem, is approximately equal to 20, at least for $D,E$ and $F$).

\begin{table}
\centering
\begin{tabular}{|l||l|l|l|l|}
\hline
$n$ & $D_{n+1}/D_{n}$ & $E_{n+1}/E_{n}$ & $F_{n+1}/F_{n}$ & $G_{n+1}/G_{n}$ \\
\hline\hline
1 & 	14	& 19,5		& ---    & 7 \\
\hline			
2 & 	16,43	& 20,21	& 31,33 & 9,86 \\
\hline		
3 & 	17,48	& 20,23	& 25,57 & 9,39 \\
\hline			
4 & 	18,05	& 20,19	& 23,61 & 5,43 \\
\hline			
5 & 	18,42   & 20,14	& 22,62 & 18,77 \\
\hline			
6 & 	18,67	& 20,11	& 22,04 & 51,89 \\
\hline			
7 &	18,86	& 20,09	& 21,67 & 29,93 \\
\hline			
8 & 	19,01	& 20,08	& 21,40 & 25,54 \\
\hline  			
9 &	19,12	& 20,07	& 21,21 & 23,71 \\
\hline			
10 &	19,21	& 20,06	& 21,06 & 22,73 \\
\hline			
11 &	19,29	& 20,06	& 20,95 & 22,13 \\
\hline			
12 & 	19,36	& 20,06	& 20,85 & 21,73 \\
\hline			
13 &	19,41 	& 20,06	& 20,78 & 21,45 \\
\hline			
14 & 	19,46 	& 20,06	& 20,72 & 21,24 \\
\hline
\end{tabular}
\caption{Sequences $D_{n+1}/D_{n}, E_{n+1}/E_{n}, F_{n+1}/F_{n}, G_{n+1}/G_{n}.$}
\label{table:quotients}
\end{table}

\vspace{5mm}

We now relate the polynomials $a_i$ to Kazarian's Thom polynomials, studied in \cite{Kaz},  Kazarian studies, in a topological setting, topological Thom polynomials for multisingularities of a map of manifolds $f: M \rightarrow N.$ In particular, he considers the situation where $f$ is the map from $X,$ the critical locus inside $F = S \times |\mathscr{L}|,$ to $Y = |\mathscr{L}|.$ For each type of multisingularity $\underline{\alpha}$ of small codimension, he introduces and computes an associated integral, linear polynomial in the four Chern numbers of $(S,\mathscr{L}),$ which he denotes by $S_{\underline{\alpha}}.$

\begin{theorem}\label{thm:gen_shape}
\emph{(\cite{Kaz}, Theorem 10.1.)}
For each type $\underline{\alpha} = (\alpha_{1},\ldots,\alpha_{r})$ of multisingularity, the number of curves on $S$ lying in a sufficiently generic linear system $|\mathscr{L}|$ and passing through $N - \textnormal{codim }\underline{\alpha}$ points in general position (where $N$ is the dimension of $|\mathscr{L}|$) is given by
\begin{equation}\label{eqn:gen_shape}
N_{\underline{\alpha}}(S,\mathscr{L}) = \frac{1}{\# \textnormal{Aut}(\underline{\alpha})} \sum_{J_{1} \sqcup \ldots \sqcup J_{l} = [r]} \prod_{i=1}^{l} S_{\underline{\alpha}_{J_i}}.
\end{equation}
\end{theorem}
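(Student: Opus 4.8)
The plan is to run, for the multisingularity $\underline{\alpha} = (\alpha_1,\ldots,\alpha_r)$, exactly the inclusion--exclusion and M\"obius-inversion argument used to prove Theorem \ref{thm:shape}, with the single critical locus $X$ replaced by loci carrying prescribed singularity types. First I would introduce, for each $j$, the closure $X_{\alpha_j} \subset F = S \times Y$ of the locus of pairs $(\kappa,y)$ such that $D_y$ has a singularity of type $\alpha_j$ at $\kappa$, together with its class $\xi_{\alpha_j}$ and the associated Thom polynomial $S_{\alpha_j} = \int_Y f_{\ast}\xi_{\alpha_j}$, which Kazarian has computed for types of small codimension. Pulling these back along the projections $p_j : F^{\times r} \to F$, one forms the intersection product $p_1^{\ast}\xi_{\alpha_1}\cdot\ldots\cdot p_r^{\ast}\xi_{\alpha_r}$, whose generic support parametrizes curves with $r$ marked singular points of the prescribed types lying on distinct points of $S$.

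Next I would subtract the equivalence of the union of the polydiagonals $\bigcup_{\pi \in \Pi_r^{\circ}}\Delta^{(r)}_{\pi}$, exactly as in Definition \ref{def:class_I}: along $\Delta^{(r)}_{\pi}$ several marked points collide, yielding curves with fewer but more degenerate singularities, and these strata must be removed. Provided $\mathscr{L}$ is sufficiently generic (the multisingularity analogue of the $r$-very ampleness hypothesis, giving transversality of the relevant evaluation map in the range of small codimension), the remaining cycle pushes forward to $Y$ to a reduced cycle enumerating the curves with an \emph{ordered} configuration of singular points of types $\alpha_1,\ldots,\alpha_r$; dividing by $\#\textnormal{Aut}(\underline{\alpha})$ removes the overcounting from orderings and from identifications among equal $\alpha_j$. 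By Lemma \ref{lemma:mobius_coeffs} the contribution of a distinguished variety supported on $\Delta^{(r)}_{\pi}$ carries the coefficient $\mu(\widehat{0}_r,\pi)$, and by the product theorem for M\"obius functions this factorizes over the blocks of $\pi$; combined with the multiplicativity of equivalences over products of small diagonals (Proposition \ref{prop:splitting}, Theorem \ref{thm:split_equiv}) and the residual analysis of Section \ref{sec:residual}, each block $B \in \mathbb{B}(\pi)$ contributes the full distinguished-variety contribution of the small diagonal $\Delta^{(|B|)}_{1\ldots|B|}$ for the sub-tuple of types indexed by $B$. The heart of the argument is the identification of this last quantity with $S_{\underline{\alpha}_{B}}$. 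Re-summing over set partitions and re-indexing as a sum over ordered decompositions $J_1 \sqcup \ldots \sqcup J_l = [r]$ then produces the formula \eqref{eqn:gen_shape}.

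The main obstacle I anticipate is precisely that identification: one must show that when $|B|$ marked singular points of types $(\alpha_j)_{j \in B}$ are constrained to collide, the entire contribution of the corresponding small-diagonal stratum --- the proper equivalence, the inclusion--exclusion correction terms in the spirit of Section \ref{sec:equivalences}, and all residual (embedded) components as analysed in Section \ref{sec:residual} --- assembles into Kazarian's topologically defined Thom polynomial $S_{\underline{\alpha}_{B}}$. For $\underline{\alpha}_{B} = (A_1)$ this is Proposition \ref{prop:a1_linear}; for $\underline{\alpha}_{B} = (A_1,A_1)$ it amounts to the computation of Example \ref{ex:two_nodes} together with the residual term $R_2$; in general it requires carrying the residual-intersection machinery of Section \ref{sec:residual} through for the (more singular) collision loci of arbitrary types and comparing the resulting algebraic excess class with the topological Thom polynomial, a comparison one expects to hold only in the range of small codimension where the latter is defined. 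Everything outside this step is the formal partition-lattice combinatorics already established in Sections \ref{sec:shape}--\ref{sec:residual}.
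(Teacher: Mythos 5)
This statement is not proved in the paper at all: it is quoted verbatim as Kazarian's Theorem 10.1 and used as external input. Kazarian's own proof is topological, resting on his cobordism-theoretic ``multisingularity principle'' and the classifying spaces of multisingularities, in which the residual classes $S_{\underline{\alpha}}$ are essentially \emph{defined} by M\"obius inversion of the multisingularity classes so that a formula of the shape \eqref{eqn:gen_shape} holds, and the substantive content is the universality and polynomiality of those residual classes. Your proposal instead tries to reprove the theorem by the intersection-theoretic route of Sections \ref{sec:shape}--\ref{sec:residual}, which is a genuinely different (and currently unavailable) approach; indeed the paper itself says, just after citing this theorem, that ``the form promised by Theorem \ref{thm:gen_shape} is not, a priori, clear from the point of view of algebraic geometry and intersection theory'' and only \emph{assumes} it for the discussion leading to Eq.~\eqref{eqn:expr_nodal}.

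The gap in your argument is exactly the step you flag as ``the heart of the argument'': identifying the total distinguished-variety contribution of the small diagonal $\Delta^{(|B|)}$ with Kazarian's $S_{\underline{\alpha}_B}$. This is not a deferred technicality but the entire content of the theorem, and within the paper's framework it cannot currently be carried out: the residual contribution $R_i$ is controlled only modulo Conjecture \ref{conj:dep}, the correction terms $C_i$ are only computed for $i\leq 4$ and are conjectural in general (Conjecture \ref{conj:linearity_corr}), and for singularity types beyond $A_1$ the loci $X_{\alpha_j}$ need not be regularly embedded of the expected codimension, so the equivalence computations of Section \ref{sec:equivalences} do not apply as stated. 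There is also a latent circularity: since $S_{\underline{\alpha}_B}$ is defined in Kazarian's theory precisely as the residual term making \eqref{eqn:gen_shape} hold, ``identifying'' the algebraic excess class with it presupposes the theorem unless you independently establish that the algebraic class is a universal polynomial in the Chern numbers --- which is the hard part. The surrounding partition-lattice combinatorics in your sketch is fine and matches Lemma \ref{lemma:mobius_coeffs} and Proposition \ref{prop:splitting}, but as a proof of the stated theorem the proposal is incomplete where it matters most.
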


In particular, we recover the expression of node polynomials as Bell polynomials. Indeed, Theorem \ref{thm:gen_shape} implies that
 \begin{eqnarray*}
N_{r}(S,\mathscr{L}) & = & \frac{1}{\# \textnormal{Aut}(A_{1}^{r})} \sum_{J_{1} \sqcup \ldots \sqcup J_{l} = [r]} \prod_{i=1}^{l} S_{A_{1}^{|J_i|}} \\
& = & \frac{1}{r!}\sum_{l=1}^{r} \sum_{j_{1} + \ldots + j_{r-l+1} = r} e_{j_{1},\ldots, j_{r-l+1}} \prod_{i=1}^{r-l+1} S_{A_{1}^{j_{i}}},
\end{eqnarray*}
with $e_{j_{1},\ldots, j_{r-l+1}}$ the number of ways to partition a set of $r$ elements into $l$ blocks of which $j_1$ have 1 element, $j_2$ have 2 elements, etc. But this is exactly the definition of Bell polynomials, so we get
\begin{equation}
N_{r}(S,\mathscr{L}) = \frac{1}{r!}P_{r}(S_{A_{1}}, \ldots, S_{A_{1}^{r}}).
\end{equation}

Thus, Kazarian's polynomial $S_{A_{1}^{i}}$ corresponds to the polynomial $a_{i}$ of Kleiman--Piene, introduced in \cite{KP1}. We defined $a_{i}$ as the degree of the pushdown to $Y$ of the contribution to $X_{1} \cdot \ldots \cdot X_{i}$ coming from all distinguished varieties with support in the small diagonal $\Delta^{(i)}_{X}.$ We will now summarize the geometric interpretation of the polynomials $a_{i}.$

In \cite{LiTz}, Li and Tzeng prove algebraically the existence of enumerative polynomials for curves with singularity type $\underline{\alpha}.$ However, the form promised by Theorem \ref{thm:gen_shape} is not, a priori, clear from the point of view of algebraic geometry and intersection theory. For the sake of the discussion, we'll assume that such a form is valid in the algebro-geometric setting. Recall that $f: X \rightarrow Y$ is the composition of the embedding $\iota: X \hookrightarrow F$ and the projection $F = S \times Y \rightarrow Y.$ If $\mathscr{L}$ is sufficiently ample on $S,$ we conjecture that
\begin{equation}\label{eqn:expr_nodal}
N_{r}(S,\mathscr{L}) = \frac{1}{r!} \int_{Y} f_{\ast} m_{r} - \sum_{\underline{\alpha} \in \Gamma_{r}^{\circ}} N_{\underline{\alpha}}(S,\mathscr{L}),
\end{equation}
where $m_{r}$ is the $r$-point cycle class of $f$ and $\Gamma_{r}^{\circ}$ is the set of all multisingularity types of codimension $r,$ $A_{1}^{r}$ excepted. Indeed, $m_{r}$ enumerates the $r$-fold points of $f,$ which includes curves with other codimension $r$ multisingularities than $r$ nodes. Using Remark \ref{rem:multiple_bell}, we can rewrite this conjectural equality as
\begin{displaymath}
N_{r}(S,\mathscr{L}) = \frac{1}{r!}P_{r}\left(Q_{1} + C_{1}, \ldots, (-1)^{r-1}(r-1)!(Q_{r} + C_{r})\right) - 
\sum_{\substack{\underline{\alpha} \in \Gamma_{r}^{\circ} \\ J_{1} \sqcup \ldots \sqcup J_{l} = [l(\alpha)]}} \frac{\prod_{i=1}^{l} S_{\underline{\alpha}_{J_i}}}{\# \textnormal{Aut}(\underline{\alpha})},
\end{displaymath}
at least for $r \leq 4.$ Since this should be equal to $\frac{1}{r!}P_{r}(a_{1},\ldots,a_{r}),$ it follows, by comparing the linear terms on each side, that
\begin{equation}
\boxed{a_{i} = (-1)^{i-1}(i-1)!(Q_{i} + C_{i}) - \sum_{\underline{\alpha} \in \Gamma_{i}^{\circ}}\frac{i!}{\# \textnormal{Aut}(\underline{\alpha})}S_{\underline{\alpha}}.}
\end{equation}
We have used the convention $C_1 = C_2 = 0.$ For completeness, we include Kazarian's polynomials $S_{\underline{\alpha}}$ for all $\underline{\alpha}$ with codimension $\leq 4$ in Table \ref{table:kazarian}.

\begin{table}
\centering
\begin{tabular}{|l||l|l|}
\hline
& $\underline{\alpha}$ & $S_{\underline{\alpha}}$ \\
\hline
$\textnormal{cod}(\underline{\alpha}) = 1$ & $A_{1}$ & $3 \partial + 2k + x$ \\
\hline
$\textnormal{cod}(\underline{\alpha}) = 2$ & $A_{2}$ & $12 \partial + 12k + 2s + 2x$ \\
& $A_{1}^2$ & $ -42 \partial - 39 k - 6 s - 7 x$ \\
\hline
$\textnormal{cod}(\underline{\alpha}) = 3$ & $A_3$ & $50 \partial + 64 k + 17 s + 5 x$ \\
& $A_{1}A_{2}$ & $-240 \partial - 288 k - 72 s - 24 x$ \\
& $A_{1}^{3}$ & $1380 \partial + 1576 k + 376 s + 138 x$ \\
\hline
$\textnormal{cod}(\underline{\alpha}) = 4$ & $A_4$ & $180 \partial + 280 k + 100 s$ \\
& $D_4$ & $15\partial + 20 k + 5 s + 5 x$ \\
& $A_{1}A_{3}$ & $-1260\partial - 1820 k - 596 s - 60 x$ \\
& $A_{2}^{2}$ & $-1260 \partial - 1800 k - 588 s - 48 x$ \\
& $A_{1}^{2}A_{2}$ & $9000 \partial + 12360 k + 3864 s + 456 x$\\
& $A_{1}^{4}$ & $-72360 \partial - 95670 k - 28842 s - 3888 x$\\
\hline
\end{tabular}
\caption{The polynomials $S_{\alpha}$ for $\textnormal{codim}(\alpha) \leq 4.$}
\label{table:kazarian}
\end{table}

We therefore see that $a_{i}$ accumulates diverse ``corrections.'' The term $$(-1)^{i-1}(i-1)!(Q_{i}+C_{i})$$ handles the contribution of the small diagonal to the intersection product $X_{1} \cdot \ldots \cdot X_{i},$ while the remaining term handles curves with higher singularities appearing in the correct codimension $i.$

\begin{example}\label{ex:enumerations}
For instance, we have
\begin{eqnarray*}
a_{2} & = & -(Q_{2}+2S_{A_{2}}) \\
& = & -42\partial - 39k - 6s - 7x, \\
a_{3} & = & 2(Q_{3}+C_{3}) - 6(S_{A_{1}A_{2}}+S_{A_{3}}) \\
& = & 1380\partial + 1576k + 376s + 138x; \\
a_{4} & = & -6(Q_4+C_4) - 24(S_{A_{1}A_{3}}+1/2S_{A_{1}^{2}A_{2}}+1/2S_{A_{2}^{2}}+S_{A_{4}}+S_{D_{4}}) \\
& = & -72360\partial - 95670k - 28842s - 3888x,
\end{eqnarray*}
where we have used the numerical expressions for the $S_{\underline{\alpha}}$ provided in \cite{Kaz} and reproduced in Table \ref{table:kazarian}. Thus, the conjectural equality presented in Eq. (\ref{eqn:expr_nodal}) is true up to at least $r=4.$

Terms such as $S_{A_{1}A_{2}}$ also have concrete interpretations. Assume one wants to compute the number of curves in $|\mathscr{L}|$ having one node and one cusp, and passing through $N-3$ points in general position on $S.$ The configuration space of choice for this computation is $F^{\times 2}.$ Let $C \subset X$ denote the locus of curves with a marked singularity which is a cusp \textit{or worse}. We are, a priori, interested in the intersection product $p_{1}^{\ast}[C] \cdot p_{2}^{\ast}[X],$ but there is an excess contribution from the diagonal $\Delta_{C} \cong C,$ as well as an embedded component related to tacnodal curves. In the case of $(\mathbb{P}^{2}, \mathscr{O}_{\mathbb{P}^{2}}(d)),$ we can compute explicitly the excess contribution, using results from \cite{Alu2}. Indeed, according to \cite[Lemma 1.4]{Alu2}, we have
\begin{equation}
c(N_{C}X) = 1 + 2(d-3)l + 2H,
\end{equation}
where $l$ denotes the class of a hyperplane in $\mathbb{P}^{2}$ and $H$ denotes the class of a hyperplane in the $\mathbb{P}^{N}$ of curves of degree $d.$ So we get
\begin{eqnarray*}
c(N_{X}F) & = & (1+(d-1)l + H)^{3} \\
c(N_{C}F) & = & (1 + 2(d-3)l + 2H)(1+(d-1)l + H)^{3}.
\end{eqnarray*}
Hence, since $c(N_{F}F^{\times 2})^{-1} = 1-3l+6l^2,$ the equivalence $E_{A_{1}A_{2}}$ of $\Delta_{C}$ for $p_{1}^{\ast}[C] \cdot p_{2}^{\ast}[X]$ is the coefficient of $l^{2}H^{3}$ in
\begin{equation}
(1+(d-1)l + H)^{3} \cdot (1-3l+6l^2) \cap (2(d-3)l+2H)((d-1)l + H)^{3}.
\end{equation}
A quick computation, using for instance \verb+Maple+, shows that this is equal to $60d^{2}-192d+144.$ Since $S_{A_{1}A_{2}} = -240d^{2}+864d-720$ and $S_{A_{3}} = 50d^{2}-192d+168,$ we see that
\begin{equation}
S_{A_{1}A_{2}} = -3(1/2E_{A_{1}A_{2}}+S_{A_{3}}),
\end{equation}
and the number of curves with a cusp and a node is given by
\begin{equation}
N_{A_{1}A_{2}} = S_{A_{1}}S_{A_{2}}+S_{A_{1}A_{2}}.
\end{equation} \hfill $\blacksquare$
\end{example}


\begin{thebibliography}{9}
\bibitem{Alu1} \textsc{P. Aluffi,} Inclusion-exclusion and Segre classes II, Topics in algebraic and noncommutative geometry (Luminy/Annapolis, MD, 2001), 51--61, Contemp. Math., 324.
\bibitem{Alu2} \bysame, Some characteristic numbers for nodal and cuspidal plane curves of any degree, manuscripta math. 72, No. 1 (1991), 425--444.
\bibitem{CH} \textsc{L. Caporaso and J. Harris,} Counting plane curves of any genus, Invent. Math. 131 (1998), no. 2, 345--392.
\bibitem{Ful} \textsc{W. Fulton,} ''Intersection Theory,'' Springer-Verlag, 1984.
\bibitem{FM} \textsc{W. Fulton and R. MacPherson,} A Compactification of Configuration Spaces, Annals of Mathematics 139, No. 1 (1994), 183--225.
\bibitem{Got} \textsc{L. G\"ottsche,} A Conjectural Generating Function for Numbers of Curves on Surfaces, Comm. Math. Phys. 196 (1998), no. 3, 523--533.
\bibitem{Kaz} \textsc{M. Kazarian,} Multisingularities, cobordism, and enumerative geometry, Russian Math. Surveys 58:4 (2003), 665--724.
\bibitem{Keel} \textsc{S. Keel,} Intersection Theory of Moduli Space of Stable N-Pointed Curves of Genus Zero, Transactions of the American Mathematical Society, Vol. 330 (1992), No. 2, 545--574.
\bibitem{Klei1} \textsc{S. Kleiman,} Multiple-point formulas I: Iteration, Acta. Math. 147 (1981), 13--49. 
\bibitem{KP1} \textsc{S. Kleiman and R. Piene,} Enumerating singular curves on surfaces, in ''Algebraic geometry --- Hirzebruch 70'' (Warsaw, 1998), 209--238, Contemp. Math., 241, Amer. Math. Soc., Providence, RI, 1999.
\bibitem{KS} \textsc{S. Kleiman and V. Shende,} On the G\"ottsche threshold, arXiv:math.AG/1204.6254.
\bibitem{KST} \textsc{M. Kool, V. Shende and R. P. Thomas,} A Short Proof of the G\"ottsche Conjecture, Geometry \& Topology 15 (2011), 397--406.
\bibitem{LiTz} \textsc{J. Li and Y. Tzeng,} Universal Polynomials for Singular Curves on Surfaces, arXiv:math.AG/1203.3180.
\bibitem{Qvi} \textsc{N. Qviller,} The Di Francesco--Itzykson--G\"ottsche Conjectures for Node Polynomials of $\mathbb{P}^{2},$ Int. Journ. Math. 23, No. 4 (2012), 19 pages.
\bibitem{Sta} \textsc{R. Stanley,} ''Enumerative combinatorics, Vol. 1,'' Cambridge Studies in Advanced Mathematics, 49. Cambridge University Press, Cambridge, 1997.
\bibitem{Tzeng} \textsc{Y. Tzeng,} A Proof of the G\"ottsche--Yau--Zaslow Formula, J. Differential Geom. 90 (2012), No. 3, 439--472.

\end{thebibliography}
\end{document}